\newcommand{\C}{{\mathbb{C}}}
\newcommand{\N}{{\mathbb{N}}}
\newcommand{\PP}{{\mathbb{P}}}
\newcommand{\R}{{\mathbb{R}}}
\newcommand{\Z}{{\mathbb{Z}}}
\newcommand{\A}{{\mathscr{A}}}
\newcommand{\mr}{M_{\mathbb{R}}}
\newcommand{\ph}{\phantom{-}}
\newcommand{\oDelta}{{\Delta^{\text{\raisebox{3.1pt}{\tiny$\hskip-6.15pt\circ$}}}}}
\newcommand\res[1]{{\lower1.5pt\hbox{$|$}}_{\raise.5pt\hbox{${\scriptstyle #1}$}}}
\newtheorem{theorem}{Theorem}[section]
\newtheorem{lemma}[theorem]{Lemma}
\newtheorem{corollary}[theorem]{Corollary}
\newtheorem{conjecture}[theorem]{Conjecture}
\newtheorem{proposition}[theorem]{Proposition}\theoremstyle{definition}  
\newtheorem{proposition/definition}[theorem]{Proposition/Definition}\theoremstyle{definition}  
\newtheorem{remark}[theorem]{Remark}
\newtheorem{example}[theorem]{Example}
\newtheorem{question}[theorem]{Question}
\newtheorem{definition} [theorem] {Definition} 
\numberwithin{theorem}{section}
\numberwithin{equation}{section}
\begin{document}

\title[Moment Maps, Linear Precision, and ML Degree]{Moment Maps, Strict Linear Precision, and Maximum Likelihood Degree One}

\author{Patrick Clarke}
\address{Department of Mathematics, Drexel University, Philadelphia, PA 19104}
\email{pclarke@math.drexel.edu}

\author{David A. Cox}
\address{Department of Mathematics and Statistics, Amherst College, Amherst, MA 01002}
\email{dacox@amherst.edu}

\begin{abstract}
We study the moment maps of a smooth projective toric variety.  In particular, we characterize when the moment map coming from the quotient construction is equal to a weighted Fubini-Study moment map.  This leads to an investigation into polytopes with strict linear precision, and in the process we use results from and find remarkable connections between Symplectic Geometry, Geometric Modeling, Algebraic Statistics, and Algebraic Geometry. 
\end{abstract}

\keywords{toric variety, moment map, blending function, maximum likelihood degree, Horn parametrization} 
\subjclass[2010]{14M25 (primary), 53D20, 62F10, 65D17 (secondary)}

\date{\today}

\maketitle


\section{Introduction}
\label{intro}

This paper began with two moment maps associated to a smooth projective toric variety $X_P$, where $P \subseteq M_\R \simeq \R^d$ is a full-dimensional lattice polytope:\ the quotient moment map and the Fubini-Study moment map.  The first comes from the symplectic geometry of the quotient presentation \cite{cox-homogeneous}
\[
X_P = \mathbb{C}^{\Sigma(1)} /\hskip-2pt/ G.
\]
Specifically, we consider the quotient moment map $\mu_\text{quot} : X_P \to M_\mathbb{R}$ associated to the $(S^1)^d$-action on $X_P$ when equipped with the symplectic structure $\varpi_\text{quot}$ that descends from the standard symplectic form on $\mathbb{C}^{\Sigma(1)}$ via symplectic reduction \cite{marsden-weinstein, meyer}.  It is natural to ask if there is an explicit formula for $\mu_\text{quot}$.

The goal of understanding $\mu_\text{quot}$ might be surprising. In algebraic geometry, there is a familiar formula for ``the'' moment map of a toric variety given by the polytope $P$ in terms of its lattice points $\A = P\cap M$ \cite[\S12.2]{cls}:
\begin{equation}
\label{muFS}
x \longmapsto \frac{1}{\sum_{m \in \A} |\chi^m(x)|^2} \sum_{m \in \A} |\chi^m(x)|^2 m.
\end{equation}
This formula is simple and explicit, however it is not generally equal to $\mu_\text{quot}$. In fact, there are many moment maps, each depending on the symplectic form used on $X_P$. The formula \eqref{muFS} is the moment map $\mu_\text{FS}$ associated to the pullback of the Fubini-Study form on projective space along the embedding 
\[
x \longmapsto [\chi^m(x)]_{m \in \A}.
\]
One can begin see differences between the Fubini-Study moment map 
and the quotient moment map even in the case of projective space.

\begin{example}
\label{P1-FS-vs-quot-example}
$\mathbb{P}^1$, obtained by symplectic reduction so that its polytope is $P = [0,2] \subseteq \mathbb{R}$, gives a simple example for which there are differences between the Fubini-Study moment map and the quotient moment map. Here, $P\cap M = [0,2]\cap\Z = \{0,1,2\}$, and \eqref{muFS} becomes
\[
\mu_\text{FS}([1 : x]) = \frac{|x|^2+ 2|x^2|^2}{1+|x|^2+|x^2|^2}. 
\]
In contrast, Example \ref{simplex strict} (based on \cite[Ex.\ 3.7]{SG}) and Theorem~\ref{momentprecision} imply that
\[
\mu_\text{quot}([1 : x]) = \frac{2|x|^2+ 2|x^2|^2}{1+2|x|^2+|x^2|^2}.
\]
\end{example}

Admittedly the differences in Example~\ref{P1-FS-vs-quot-example} are small. However, there are polytopes, such as the trapezoid considered in Section~\ref{rationalsection}, where the differences are significant. 

Our paper includes several notable results that put Example~\ref{P1-FS-vs-quot-example} in context and reveal a deep connection to the concept of \emph{strict linear precision} from geometric modeling.  Our main result is Theorem~\ref{XML}, whose proof draws on concepts from Algebraic Geometry, 
Geometric Modeling, and Algebraic Statistics.

One can see the interactions among these many areas of mathematics in the combination of Propositions~\ref{wtstructure} and \ref{Km=tx-proposition}, which give the equation
\[
K_{\mathbf{w}} \circ \mu_\text{quot} = \tau_\A \circ [w \chi]_\A \circ |{\bullet}|^2.
\]
Here one has 
\begin{itemize}
\item Krasauskas's map $K_{\mathbf{w}}$ from Geometric Modeling,
\item the quotient moment map $\mu_\text{quot}$ from Symplectic Geometry,
\item Garcia-Puente \& Sottile's tautological map $\tau_\A$ from Algebraic Statistics,
\item the weighted character map $[w \chi]_\A$ from Algebraic Geometry, and
\item the norm-squared map $|{\bullet}|^2$ of a toric variety from Real Algebraic Geometry.
\end{itemize}
Our main result (Theorem~\ref{XML}) combines these ingredients with Huh's theorem \cite{huh} on the Horn parametrization of very affine varieties of maximum likelihood degree one. To this we add 
 a proof that any Horn parametrization is given by an essentially unique minimal Horn matrix (Proposition~\ref{RedHornMatrixProp}).

In the rest of the Introduction, we review some aspects of Symplectic Geometry, Geometric Modeling, and Algebraic Statistics that establish the context of our paper in more detail.

\medskip

\noindent $\bullet$ {\scshape Symplectic Geometry:} 
A $d$-dimensional lattice polytope $P \subseteq M_\mathbb{R} \simeq \R^d$ defines toric variety $X_P$ with a $(S^1)^d$-action.  When $X_P$ is smooth, there are many ways to equip it with a symplectic form for which this action is Hamiltonian.  One way, described in Proposition \ref{wtmomentprop}, uses symplectic reduction to find a symplectic form $\varpi_\text{quot}$ on $X_P$ whose moment map is $\mu_\text{quot}$.  

Another way to get symplectic forms on $X_P$ is via equivariant embeddings into projective space.  Since $X_P$ is smooth, the characters $\chi^m$ for $m \in \A =P\cap M$ give one such embedding, and using appropriate weights leads to \emph{weighted Fubini-Study forms} $\varpi_{\text{FS},w}$ whose \emph{weighted moment maps} $\mu_{\text{FS},w}$ are weighted versions of the explicit formula given in \eqref{muFS}.  The quotient moment map $\mu_\text{quot}$, on the other hand, is described indirectly, and there is no a priori reason to believe it is given by a nice formula.  

An organizing question that motivates and guides this paper is: 

\begin{question}
\label{Q1}
When is the quotient moment map $\mu_\text{quot}$ equal to a weighted moment map $\mu_{\text{FS},w}$?
\end{question}

\noindent
The answer to this question connects Symplectic Geometry to Geometric Modeling and Algebraic Statistics through the property of linear precision.
\medskip


\noindent $\bullet$ {\scshape Geometric Modeling:} 
For any lattice polytope $P$, Krasauskas defined  \emph{toric blending functions} $\beta_m : P \to \R$, one for each lattice point  $m \in \A = P\cap M$ \cite{krasauskas}. These are natural generalizations of monomials in M\"obius' barycentric coordinates \cite{mobius}.  Allowing positive weights $w_m$  and normalizing leads to natural generalizations of the Bernstein basis polynomials:
\[
\overline{\beta}_{m,w} = \frac{w_m\hskip1pt \beta_m}{\sum_{m \in  \A}w_m\hskip1pt \beta_m}.
\]
These normalized weighted toric blending functions are rational functions that share many of the properties that make 
Bernstein basis polynomials
so useful.
However, there is one important property that is not guaranteed---\emph{linear precision}.

Linear precision has to do with approximation. 
Just as with Bernstein basis polynomials, a function $g : P \to \mathbb{R}$ can be approximated by 
\[
G(p) = \sum_{m \in \A } g(m) \overline{\beta}_{m, w}(p).
\]
The relationship is between the original $g$
and the resulting approximation $G$ depends on the functions $w_m \beta_m$.  In the convenient situation where $G=g$, the approximation is said to reproduce $g$. \emph{Linear precision} is the property that the approximation reproduces all affine linear functions.  

Fortunately, it is not necessary to check that every affine linear function is reproduced to know whether or not the $\overline{\beta}_{m,w}$'s have linear precision.  A necessary and sufficient condition is that the equation
\[
p = 
\sum_{m \in \A}  \overline{\beta}_{m,w} (p)\hskip1pt m
\]
hold for $p \in P$.  Geometrically, this means the map 
\[
K_w : P \longrightarrow P, \quad K_w(p) = \sum_{m \in \A}  \overline{\beta}_{m,w} (p)\hskip1pt m
\]
is the identity. In terms of the original blending functions, this equation is
\begin{equation}
\label{eq:linear-precision}
p = \frac{1}{\sum_{m \in  \A}w_m\hskip1pt \beta_m(p)} 
\sum_{m \in \A} w_m\hskip1pt \beta_m(p)\hskip1pt m.
\end{equation}
This leads to the following natural question:

\begin{question}
\label{Q2}
For which polytopes $P$ do there exist weights $(w_m)_{m \in \mathcal{A}}$ so that \eqref{eq:linear-precision} is satisfied?
\end{question}

\noindent
Polytopes for which weights exist so that \eqref{eq:linear-precision} is satisfied are said to have \emph{strict linear precision}.  
In particular, Krasauskas observed that for arbitrary polytopes and arbitrary positive weights the map $K_w$ is an analytic automorphism of $P$ \cite{krasauskas}.  
He called this \emph{analytic precision}, since this means \eqref{eq:linear-precision} is satisfied by the analytic functions 
\[
\tilde{\beta}_m = \beta_m \circ K_w^{-1}.
\]

Our main result (Theorem~\ref{XML}) reveals that Questions~\ref{Q1} and \ref{Q2} are intimately related.  But to understand the full story, we need tools from one more area of mathematics.
 
\medskip

\noindent $\bullet$ {\scshape Algebraic Statistics:}
The lattice points  $\A$ and weights $w$ define a morphism
\[
[w_m \chi^m ]_{m \in \A} : X_P \longrightarrow \mathbb{P}^{s-1}, \quad s = |\A|.
\]
Log-linear statistical models use this embedding to treat the image of the positive points $(X_P)_{>0}$  as a space of probability distributions on $\A$.  
Given an observed probability distribution $u \in \mathbb{P}^{s -1}_{>0}$, \emph{the maximal likelihood estimate} $L(u)$ is the ``best'' estimate in $(X_P)_{>0}$ of $u$.  Darroch and Ratcliff \cite{DR} show that $u$ and $L(u)$ have the same image under the tautological map 
\[
\tau_\A : \mathbb{P}^{s -1}_{\ge0} \longrightarrow  \mr. 
\]
that comes from the linear projection $\tau_\A : \PP^{s-1} \dashrightarrow \PP^d$ that sends $[x_m]_{m\in\A}$ to $[\sum_{m\in\A} x_m:\sum_{m\in\A} x_m\hskip1pt m]$ (see Definition \ref{tau-definition}).  The degree of $\tau_\A$ restricted to the image of $X_P$ in $\PP^{s-1}$ is the \emph{maximum likelihood degree}.  An especially nice case is when the ML degree is one, since $L(u)$ is a rational expression in $u$ when this happens.  
Furthermore, a theorem of Huh \cite{huh} guarantees that when the ML degree is one, $X_P$ is a reduced $A$-discriminantal variety and has a \emph{Horn parametrization}.  

Garcia-Puente and Sottile \cite{SG} connect this to Geometric Modeling by noting that having ML degree one is equivalent to a property they call \emph{rational linear precision}.  This includes strict linear precision as a special case.

Combining all of the these ingredients, we get our main theorem:

\medskip

\noindent
{\bf Theorem~\ref{XML}}
{\bf (Main Theorem).}
{\it For a lattice polytope $P$ and positive weights $w = (w_m)_{m\in\A}$, the following are equivalent:
\begin{enumerate}
\item $P$ has strict linear precision for $w$. 
\item The sum of the facet normals of $P$ equals zero,  and the polynomial
\[
\beta_w = \sum_{m \in \A} w_m \beta_m
\]
in the denominator of \eqref{eq:linear-precision} is a nonzero constant.
\end{enumerate}
When {\rm(1)} and {\rm(2)} hold, the ML degree is one and the ML estimate has an explicit formula determined by the lattice points of $P$.  Furthermore, when $X_P$ is smooth,  {\rm(1)} and {\rm(2)} are equivalent to 
\begin{enumerate}
\item[(3)] $\mu_\text{\rm quot} = \mu_{\text{\rm FS}, w}$
\end{enumerate}}

\vspace{.1in}

All of the terms used above will be defined carefully in the course of the paper, which is organized as follows.  In Section~\ref{momentsection}, we establish notation and review the relevant toric and symplectic geometry.  We then describe the quotient and weighted moment maps.  The corresponding symplectic forms were studied in 1994 by Guillemin \cite{guillemin}.  We review his work in Section~\ref{guillemin}.
Section~\ref{strictsection} begins with toric blending functions and strict linear precision.  Theorem~\ref{strictequivalence} characterizes when strict linear precision occurs, and Conjecture~\ref{simploidalconj} gives our best guess for which polytopes have this property.  We also prove Theorem~\ref{momentprecision}, which relates Question~\ref{Q1} to strict linear precision.  

Section~\ref{mlesection} reviews the concepts of maximum likelihood estimate and maximum likelihood degree, and in Section~\ref{mle1section}, we focus on ML degree one, where there is a formula for the maximum likelihood estimate.    We also recall a key result of Huh \cite{huh}.  In Section~\ref{hornsection}, we prove Theorem~\ref{XML}, which has Theorem~\ref{strictequivalence} as a corollary.  Finally, Section~\ref{rationalsection} discusses the more general notion of \emph{rational linear precision} due to Garcia-Puente and Sottile \cite{SG}.  Although this takes us away from the symplectic setting of Question~\ref{Q1}, we give two extended examples that illustrate how rational linear precision leads to interesting questions.  An appendix discusses the non-negative and positive parts of a toric variety.

\section{Moment Maps of Smooth Toric Varieties}
\label{momentsection}

In this section, we use standard notation from toric geometry \cite{cls}, including dual lattices $M$ and $N$, characters $\chi^m$ for $m \in M$, cones $\sigma$ of a fan $\Sigma$.  Rays $\rho \in \Sigma(1)$ have ray generators $n_\rho \in \rho \cap N$ and give divisors $D_\rho$ on the toric variety of $\Sigma$.  We follow the convention that $\sum_\rho$ and $\prod_\rho$ mean the sum and product over all $\rho\in\Sigma(1)$.

Consider a full-dimensional lattice polytope $P \subseteq \mr \simeq \R^d$.  Let $\Sigma$ be the normal fan of $P$ and $X_P$ be the associated toric variety.  In this section we assume that $X_P$ is smooth.  The $n_\rho$ are the \emph{facet normals} in the facet presentation 
\begin{equation}
\label{fp}
P = \{p\in \mr \mid \langle p,n_\rho\rangle \ge -a_\rho\ \forall\hskip.5pt \rho \in \Sigma(1)\}
\end{equation}
that plays an important role in what follows.  We will also use the exact sequence
\begin{equation}
\label{ses}
0 \longrightarrow M \stackrel{A}{\longrightarrow} \Z^{\Sigma(1)} \stackrel{Q^*}{\longrightarrow} 
\mathrm{Cl}(X_P) \longrightarrow 0,
\end{equation}
where  $A(m) = (\langle m,n_\rho\rangle)_{\rho \in \Sigma(1)}\in \Z^{\Sigma(1)}$ and $Q^*((b_\rho)_{\rho \in \Sigma(1)})  = \big[\sum_\rho b_\rho D_\rho\big] \in \mathrm{Cl}(X_P)$.


\subsection{The Quotient Construction} We follow \cite{cls}.  Consider the affine space $\C^{\Sigma(1)}$ with coordinate ring $\C[z_\rho] = \C[z_\rho \mid \rho \in \Sigma(1)]$.   Define:
\begin{itemize}
\item The Stanley-Reisner ideal $I =\langle \hskip1pt \mbox{\small$\prod$}_{\rho \notin \sigma(1)} z_\rho \mid \sigma \in \Sigma \big\rangle \subseteq \C[z_\rho]$ of $\Sigma$.

\smallskip

\item The non-semistable locus $J = \mathbf{V}(I) \subseteq \C^{\Sigma(1)}$.

\smallskip

\item The group $G = \{(t_\rho)_{\rho \in \Sigma(1)} \in (\C^*)^{\Sigma(1)} \mid  \mbox{\small$\prod$}_\rho t_\rho^{\langle m,n_\rho\rangle} = 1\  \forall\, m\in M\}$.
\end{itemize}
Since $J \subseteq \C^{\Sigma(1)}$ is invariant under the action of $G \subseteq (\C^*)^{\Sigma(1)}$, we can form the quotient $(\C^{\Sigma(1)} \setminus J)/\hskip-2pt/G$, which in this case is a geometric quotient.  By \cite[Thm.\ 5.1.11]{cls}, there is a natural isomorphism of varieties
\begin{equation}
\label{quotientxp}
X_P \simeq (\C^{\Sigma(1)} \setminus J)/\hskip-2pt/G.
\end{equation}
The variety $\C^{\Sigma(1)} \setminus J$ is toric and described by the fan in $\mathbb{R}^{\Sigma(1)}$
\[
\big\{\mathrm{Cone}(e_\rho \ | \  \rho \in \sigma(1)) \ | \ \sigma \in \Sigma)\big\}.
\]
In addition, the morphism $\C^{\Sigma(1)} \setminus J \to X_P$ is toric.

\subsection{The Symplectic Moment Map} Our reference for symplectic geometry is Audin \cite{audin}.  The basic setting is an even-dimensional real manifold  $Z$ with a nondegenerate closed $2$-form $\varpi$.  When a real Lie group $\Gamma$ acts on $(Z,\varpi)$ by symplectomorphisms, its \emph{moment map}
\[
\mu : Z \to \mathrm{Lie}(\Gamma)^*
\]
has the property that for every $\xi \in \mathrm{Lie}(\Gamma)$, 
\[
d(\langle \mu, \xi \rangle)(v) = \varpi(\alpha_z(\xi), v )
\]
where 
\begin{itemize}
\item $\langle \mu, \xi \rangle : Z \to \mathbb{R}$ is the function given by pairing the Lie algebra covector $\mu(z)$ that depends on $z \in Z$ with the fixed Lie algebra vector $\xi$,
\item 
$\alpha_z(\xi) = \frac{{d}}{{d}t}\res{t=0} \exp(t \xi) \cdot z$,
and 
\item $v  \in T_z Z$.
\end{itemize}
The moment map $\mu$ is well-defined  up to translation in $\mathrm{Lie}(\Gamma)^*$.

A simple example is $Z = \C^d$ with symplectic form 
\[
\varpi = \sum_{j=1}^d dx_j \wedge dy_j = \tfrac{i}2\sum_{j=1}^d dz_j \wedge d\overline{z}_j,\quad z_j = x_j + iy_j.
\]
By  \cite[Ex.\ III.1.7]{audin}, the moment map of the diagonal $\Gamma = (S^1)^d$ action on $\C^d$ is 
\[
\mu_0 : \C^d \longrightarrow \R^d, \quad \mu_0(z_1,\dots,z_d) = (\tfrac12|z_1|^2, \dots,\tfrac12|z_d|^2) \in \R^d.
\]
Here, $\R^d$ is naturally identified with $\mathrm{Lie}((S^1)^d)^*$


\subsection{The Quotient Moment Map} For $\C^{\Sigma(1)}$, the   $(S^1)^{\Sigma(1)}$-action has moment map $\mu_0 : \C^{\Sigma(1)} \to \R^{\Sigma(1)}$ defined by
\[
(z_\rho)_{\rho \in \Sigma(1)} \longmapsto ({\textstyle\frac12}|z_\rho|^2)_{\rho \in \Sigma(1)} \in \R^{\Sigma(1)}.
\]
Since a moment map is only defined up to translation, we will instead use
\begin{equation}
\label{mutrans}
\mu_0((z_\rho)_{\rho \in \Sigma(1)}) = ({\textstyle\frac12}|z_\rho|^2 - a_\rho)_{\rho \in \Sigma(1)} \in \R^{\Sigma(1)}.
\end{equation}

The group $G \subseteq (\C^*)^{\Sigma(1)}$ has the real subgroup 
$\Gamma \subseteq (S^1)^{\Sigma(1)}$.  These groups all act on $\mathbf{C}^{\Sigma(1)} \setminus J$.  For $\Gamma$, we can describe the moment map as follows.

\begin{lemma} 
\label{mudef}
The moment map of the $\Gamma$-action on  $\C^{\Sigma(1)} \setminus J \subseteq \C^{\Sigma(1)}$ is the composition
\[
\mu : \C^{\Sigma(1)} \setminus J \subseteq \C^{\Sigma(1)} \stackrel{\mu_0}{\longrightarrow} \R^{\Sigma(1)} \stackrel{Q^*}{\longrightarrow} \mathrm{Cl}(X_P)_\R,
\]
where $Q^*$ is from \eqref{ses}.
\end{lemma}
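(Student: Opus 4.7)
The plan is to reduce Lemma \ref{mudef} to the standard functoriality of moment maps under passage to a closed subgroup, and then to carry out a purely linear-algebraic identification using the exact sequence \eqref{ses}.  My first step will be to recall that if $(Z,\varpi)$ carries a Hamiltonian action of a Lie group $H$ with moment map $\mu_H:Z\to\mathrm{Lie}(H)^*$, and $\iota:\Gamma\hookrightarrow H$ is a closed subgroup, then the restricted $\Gamma$-action is Hamiltonian with moment map $\iota^*\circ\mu_H$.  Indeed, for $\xi\in\mathrm{Lie}(\Gamma)$, the fundamental vector field on $Z$ associated to $\iota\xi\in\mathrm{Lie}(H)$ coincides with the one associated to $\xi$, so the defining identity for $\mu_H$ paired against $\iota\xi$ reads exactly as the defining identity for $\iota^*\circ\mu_H$ paired against $\xi$.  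Taking $H=(S^1)^{\Sigma(1)}$ acting on $\C^{\Sigma(1)}$ with moment map $\mu_0$ from \eqref{mutrans}, and restricting to the $\Gamma$-invariant open subset $\C^{\Sigma(1)}\setminus J$, this reduces the lemma to identifying $\iota^{*}$ with $Q^{*}$ tensored with $\R$.

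The second and main step is that identification.  I would apply $\mathrm{Hom}_{\Z}(-,\R)$ to \eqref{ses} to obtain
\[
0\longrightarrow \mathrm{Cl}(X_P)^{*}_{\R}\longrightarrow \R^{\Sigma(1)}\stackrel{A^\vee}{\longrightarrow}N_{\R}\longrightarrow 0,
\]
in which $A^{\vee}$ sends $e_{\rho}$ to $n_{\rho}$ because $\langle e_{\rho},A(m)\rangle=\langle m,n_{\rho}\rangle$.  In parallel, applying $\mathrm{Hom}(-,S^{1})$ to \eqref{ses} exhibits $\Gamma$ as the kernel of the homomorphism $(S^{1})^{\Sigma(1)}\to\mathrm{Hom}(M,S^{1})$ sending $(t_{\rho})$ to $m\mapsto\prod_{\rho}t_{\rho}^{\langle m,n_{\rho}\rangle}$, and the derivative of this map at the identity is precisely $A^{\vee}$.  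Hence $\mathrm{Lie}(\Gamma)=\ker A^{\vee}\subseteq\R^{\Sigma(1)}$, and dualizing the inclusion produces the quotient map $\R^{\Sigma(1)}\twoheadrightarrow \R^{\Sigma(1)}/A(M_{\R})\cong \mathrm{Cl}(X_P)_{\R}$, which is exactly $Q^{*}\otimes_{\Z}\R$.

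Combining the two steps gives $\mu=Q^{*}\circ\mu_{0}$ on $\C^{\Sigma(1)}\setminus J$, as claimed; the translation by $(a_{\rho})$ built into \eqref{mutrans} is harmless here because a moment map is defined only up to an element of $\mathrm{Lie}(\Gamma)^{*}$.  The main obstacle I anticipate is the bookkeeping in the second step: one must carefully distinguish the Pontryagin duality used to pass from the character-lattice sequence \eqref{ses} to the group $\Gamma$ from the linear duality used to identify $\mathrm{Lie}(\Gamma)^{*}$, and one must confirm that the two resulting maps $\R^{\Sigma(1)}\to\mathrm{Cl}(X_{P})_{\R}$ agree and coincide with $Q^{*}\otimes_{\Z}\R$.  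Once the conventions are pinned down, the lemma is a formal consequence of subgroup functoriality of moment maps.
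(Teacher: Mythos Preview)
Your proposal is correct and follows essentially the same approach as the paper's proof: both invoke the functoriality of moment maps under restriction to a closed subgroup (the paper cites this as the first step of \cite[Thm.\ VII.2.1]{audin}), and both identify the dual projection $\mathrm{Lie}((S^1)^{\Sigma(1)})^*\to\mathrm{Lie}(\Gamma)^*$ with $Q^*\otimes\R$ by first obtaining the exact sequence $0\to\mathrm{Lie}(\Gamma)\to\R^{\Sigma(1)}\to N_\R\to 0$ and then comparing its dual with \eqref{ses}. Your write-up is somewhat more explicit in spelling out the Pontryagin and linear dualities, but the logical structure is the same.
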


\begin{proof}
The map $A$ from \eqref{ses} induces a map 
$\mathrm{Hom}_\Z(\Z^{\Sigma(1)},\C^*) \to \mathrm{Hom}_\Z(M,\C^*)$ whose kernel is $G$. This gives an exact sequence of Lie algebras
\[
0 \longrightarrow \mathrm{Lie}(G)  \longrightarrow \C^{\Sigma(1)} 
\longrightarrow
N_\C \longrightarrow 0.
\]
Working over $\R$, we obtain
\[
0 \longrightarrow \mathrm{Lie}(\Gamma)  \longrightarrow \R^{\Sigma(1)} \longrightarrow
N_\R \longrightarrow 0.
\]
Comparing the dual of this to \eqref{ses}, we see that $\mathrm{Lie}((S^1)^{\Sigma(1)})^* \to \mathrm{Lie}(\Gamma)^*$ can be naturally identified with $\R^{\Sigma(1)} \stackrel{Q^*}{\to} \mathrm{Cl}(X_P)_\R$.

Given this identification, the lemma now follows from the first step of the proof of \cite[Thm.\ VII.2.1]{audin}.
\end{proof}

Because of the translation used in \eqref{mutrans}, $0 \in \mathrm{Cl}(X_P)_\R$ is a regular value of $\mu$.  Then the second step of the proof of  \cite[Thm.\ VII.2.1]{audin} implies that $\mu^{-1}(0)$ is contained in $\C^{\Sigma(1)} \setminus J$, is stable under the action $\Gamma$, and satisfies 
\begin{equation}
\label{symred}
\mu^{-1}(0)/\Gamma \simeq (\C^{\Sigma(1)} \setminus J)/\hskip-2pt/G \simeq X_P.
\end{equation}
The quotient $\mu^{-1}(0)/\Gamma$ is an example of \emph{symplectic reduction} \cite[III.2.f]{audin}.  In this situation, 
the form $\varpi = \frac{i}2\sum_\rho dz_\rho \wedge d\overline{z}_\rho$ on $\C^{\Sigma(1)}$, when restricted to $\mu^{-1}(0)$, descends to a symplectic form $\varpi_\text{quot}$ on $\mu^{-1}(0)/\Gamma = X_P$ whose cohomology class is the class of $D_P = \sum_\rho a_\rho D_\rho$.  The torus of $X_P$ is $T = \mathrm{Hom}_\Z(M,\C^*)$, so that the associated real torus $T_\R$ has dual Lie algebra $\mr$.  Thus we have a moment map
\[
\mu_\text{quot} : X_P = \mu^{-1}(0)/\Gamma \longrightarrow \mr,
\]
which can be described as follows.

\begin{proposition}[Quotient Moment Map]
\label{quotmomentprop}
Assume that $(z_\rho)_{\rho \in \Sigma(1)} \in \mu^{-1}(0)$ maps to $x \in X_P$. Then
\[
\mu_\text{\rm quot}(x) = p, 
\]
where $p \in M_\R$ satisfies 
\[
{\textstyle\frac12}|z_\rho|^2 = \underbrace{\langle p,n_\rho\rangle + a_\rho}_{\text{\rm lattice distance}}\quad \forall\hskip.5pt \rho \in \Sigma(1).
\]
\end{proposition}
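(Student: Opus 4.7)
The plan is to combine the exact sequence \eqref{ses} with the standard theory of moment maps under symplectic reduction. First, I would verify that the vector $p \in M_\R$ promised by the statement is well-defined by the displayed lattice distance equation. By Lemma~\ref{mudef}, the condition $(z_\rho) \in \mu^{-1}(0)$ means $Q^*(\mu_0((z_\rho))) = 0$, so exactness of \eqref{ses} places $(\tfrac12|z_\rho|^2 - a_\rho)_\rho$ in the image of $A: M_\R \to \R^{\Sigma(1)}$. Because $P$ is full-dimensional, the facet normals $n_\rho$ span $N_\R$, so $A$ is injective. Hence there is a unique $p \in M_\R$ with $A(p) = (\tfrac12|z_\rho|^2 - a_\rho)_\rho$, which is precisely the equation stated.

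Next, I would identify the residual torus and its dual Lie algebra. The torus $T_\R$ acting on $X_P = \mu^{-1}(0)/\Gamma$ is $(S^1)^{\Sigma(1)}/\Gamma$, whose Lie algebra is $N_\R$, arising as the cokernel in the short exact sequence $0 \to \mathrm{Lie}(\Gamma) \to \R^{\Sigma(1)} \to N_\R \to 0$ that appeared in the proof of Lemma~\ref{mudef}. Dualizing, the dual Lie algebra of $T_\R$ is $M_\R$, embedded in $\R^{\Sigma(1)}$ via $A$ as $\ker(Q^*)$.

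Finally, I would invoke the general principle that under symplectic reduction, the moment map for the residual torus action on the reduced space is obtained by restricting the ambient moment map to the zero level set and then using the identification of its image with the dual Lie algebra of the residual torus. This is precisely the content of the remaining steps of the proof of \cite[Thm.~VII.2.1]{audin}. In our setting, since $\mu_0((z_\rho)) \in \mathrm{im}(A)$ on $\mu^{-1}(0)$, the projection is simply $A^{-1}$, so $\mu_\text{quot}(x) = A^{-1}(\mu_0((z_\rho))) = p$.

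The main obstacle is bookkeeping of dualities: one must verify that the dual of $0 \to \mathrm{Lie}(\Gamma) \to \R^{\Sigma(1)} \to N_\R \to 0$ is compatible with the sequence \eqref{ses} in the sense that the embedding of $M_\R$ as $\ker(Q^*)$ given by $A$ agrees with the embedding of the dual Lie algebra of the residual torus as the annihilator of $\mathrm{Lie}(\Gamma)$. This compatibility is built into the construction but deserves explicit checking; once pinned down, the computation of $\mu_\text{quot}$ becomes an immediate consequence of the symplectic reduction formalism and the translation convention \eqref{mutrans}.
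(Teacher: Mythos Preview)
Your proposal is correct and follows essentially the same approach as the paper: both arguments use exactness of \eqref{ses} together with Lemma~\ref{mudef} to produce the unique $p \in M_\R$, then appeal to the symplectic reduction formalism in \cite[Thm.~VII.2.1]{audin} to identify the resulting map as the moment map for the residual torus. The paper organizes this via a commutative diagram and additionally cites \cite[Section 2.3]{nick}, but the logical content is the same as what you outline, and your explicit attention to the duality bookkeeping is a welcome clarification of what the paper leaves to the cited references.
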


\begin{proof}
The quotient construction \eqref{quotientxp}, the symplectic reduction \eqref{symred}, and the moment map described in Lemma~\ref{mudef} can be combined to give the diagram shown in Figure~\ref{diagram}.  

\begin{figure}[h]
\SelectTips{cm}{}
\begin{equation*}
\begin{array}{c}
\xymatrix{
&X_P \ar@{-->}[rrd]^{\rule{15pt}{0pt} \ \ \mu_\text{quot} \,= \, \text{moment map}} && 0 \ar[d]\\
\mu^{-1}(0)/\Gamma \ar@{=}[r] \ar@/^1.5pc/@{--}[rrr]^{\mu_\text{quot}} &(\C^{\Sigma(1)} \setminus J)/G  \ar@{=}[u]& & M_\R \ar[d]^A\\
\mu^{-1}(0) \ar@{^{(}->}[r] \ar[u] \ar@{-->}[urrr]^{\widehat\mu} & \C^{\Sigma(1)} \setminus J \ar[u]\ar@{^{(}->}[r]  &  \C^{\Sigma(1)} \ar[r]^{\mu_0} \ar[rd]^{\mu}& \R^{\Sigma(1)}  \ar[d]^{Q^*}\\
&&& \mathrm{Cl}(X_P)_{\R}\ar[d]\\
&&& 0
}
\end{array}
\end{equation*}
\caption{The Quotient Moment Map of $X_P$}
\label{diagram}
\end{figure}
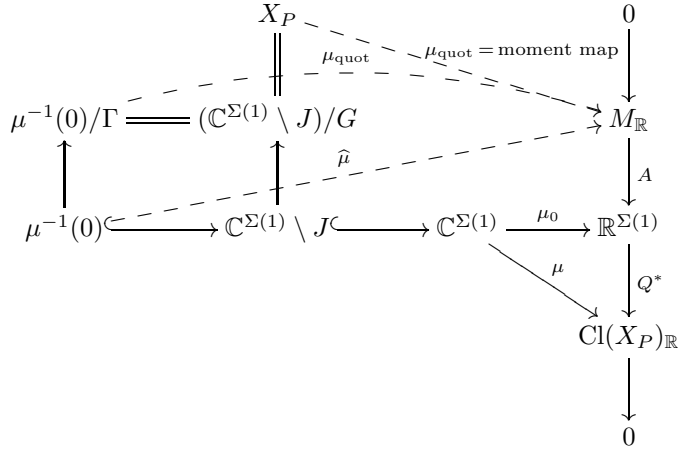

With  $\mu_0((z_\rho)_{\rho \in \Sigma(1)}) = (\frac12 |z_\rho|^2-a_\rho)_\rho$,  Lemma~\ref{mudef} defines
\[
\mu((z_\rho)_{\rho \in \Sigma(1)}) = \big[{\textstyle\sum_{\rho}(\frac12 |z_\rho|^2-a_\rho)D_\rho}\big] \in \mathrm{Cl}(X_P)_\R,
\]
Thus $\mu^{-1}(0)$ consists of points $(z_\rho)_{\rho\in\Sigma(1)}$ satisfying
\[
\big[{\textstyle\sum_{\rho}\frac12|z_\rho|^2 D_\rho}\big] = [D_P].
\]
Given $(z_\rho)_{\rho}\in \mu^{-1}(0)$, the exactness of the vertical sequence in Figure~\ref{diagram} implies that there is a unique $p \in \mr$ such that
\begin{equation}
\label{urhozrho}
\langle p,n_\rho\rangle = {\textstyle\frac12}|z_\rho|^2-a_\rho
\end{equation}
for all $\rho\in\Sigma(1)$.  Then map $\widehat\mu$ in Figure~\ref{diagram} is defined by
\begin{equation}
\label{widehatmu}
\widehat\mu((z_\rho)_{\rho\in\Sigma(1)}) = p.
\end{equation}
Since $\widehat\mu$ is clearly invariant under $\Gamma$, it factors through the quotient, giving the map 
\begin{equation}
\label{moment}
\mu_\text{quot} : X_P = \mu^{-1}(0)/\Gamma \longrightarrow \mr 
\end{equation}
shown in Figure~\ref{diagram}.  This map appears twice in the diagram because we identify $X_P$ with $\mu^{-1}(0)/\Gamma$. 
By \cite[Thm.\ VII.2.1, first step of the proof]{audin} and \cite[Section 2.3]{nick}, this is the moment map of $X_P$ under the action of the real torus $T_\R$.  
\end{proof}

\begin{remark}
We can write the facet presentation \eqref{fp} as
\[
P = \{p\in \mr \mid \langle p,n_\rho\rangle +a_\rho \ge 0\ \forall\hskip.5pt \rho \in \Sigma(1)\}.
\]
For a point $p \in P$, the quantity $ \langle p,n_\rho\rangle +a_\rho$ gives the \emph{lattice distance} from $p$ to the facet of $P$ with facet normal $n_\rho$.  This explains the term ``lattice distance'' that appears in the statement of Proposition~\ref{quotmomentprop}. Lattice distances play a prominent role in some of our main results.

Note that \eqref{urhozrho} implies $\langle p,n_\rho\rangle + a_\rho \ge 0$ for all $\rho$, so that $\mu_\text{quot}(x) = p \in P$. It is a classical theorem of Delzant that the image of the moment map $\mu_\text{quot}$ is precisely the polytope $P$.
\end{remark}

\begin{example} The standard $d$-simplex is
\[
\Delta_d = \{(x_1,\dots,x_n) \in \R^d \mid x_i \ge 0,\ x_1+\cdots+x_d \le 1\} \subseteq \R^d.
\]
The facet normals are $n_0 = -e_1-\cdots-e_d$ and $n_i = e_i$ for $i = 1,\dots,d$, with $a_0 = 1$ and $a_1 = \cdots = a_d = 0$ in the facet presentation.  With $M = \Z^d$, Lemma~\ref{mudef} gives
\[
\mu(z_0,\dots,z_d) = \tfrac12|z_0|^2+\cdots+\tfrac12|z_d|^2 - 1 \in \R,
\]
and when $z = (z_0,\dots,z_d) \in \mu^{-1}(0)$, Proposition~\ref{quotmomentprop} implies the moment map of $\PP^d$ is given by
\[
\mu_\text{quot}(z) = \big(\tfrac12|z_1|^2,\dots,\tfrac12|z_d|^2\big) \in \mr = \R^d.
\]
For later purposes, note that the lattice points 
\[
\A = \Delta_d\cap\Z^d = \{0,e_1,\dots,e_d\} = \{m_0,m_1,\dots,m_d\}
\]
 give the characters $\chi^{m_j}(z) = z_j/z_0$ on the torus of $\PP^d$.  Since $z \in \mu^{-1}(0)$ means $|z_0|^2+ \cdots+ |z_d|^2 = 2$, some algebra reveals that the moment map can be written
\begin{equation}
\label{pdmm}
\begin{aligned}
\mu_\text{quot}(z) &= \frac1{\sum_{j=0}^d |\chi^{m_j}(z)|^2} \sum_{j=0}^d |\chi^{m_j}(z)|^2 \hskip1pt m_j\\ &= \frac1{\sum_{m \in \A} |\chi^{m}(z)|^2} \sum_{m\in \A} |\chi^{m}(z)|^2 \hskip1pt m.
\end{aligned}
\end{equation}
\end{example}

\subsection{Weighted Moment Maps} 
\label{subsection-weighted-moment-maps} 
Besides the quotient moment map described in Proposition~\ref{quotmomentprop}, $X_P$ has moment maps that arise from projective embeddings.  To begin, set $\A = P\cap M$ and $s = |\A|$.  Let $\PP^{s-1}$ be the projective space with homogeneous coordinates indexed by $\A$.  

The characters $\chi^m$ for $m \in \A$ form a basis of the global sections of $\mathcal{O}_{X_P}(D_P)$ and hence give a projective embedding 
\begin{equation}
\label{wt1emb}
[\chi^m]_{m \in \mathcal{A}} : X_P \longrightarrow \PP^{s-1}, \quad x \in X_P \longmapsto [\chi^m(x)]_{m \in \A} \in \PP^{s-1}.
\end{equation}
We know the moment map for $\PP^{s-1}$ from \eqref{pdmm}, so pulling back to $X_P$ suggests that the moment map for $X_P$ should be given by
\begin{equation}
\label{wt1mm}
x \in X_P \longmapsto \frac1{\sum_{m \in \A} |\chi^{m}(x)|^2} \sum_{m\in \A} |\chi^{m}(x)|^2 \hskip1pt m \in \mr.
\end{equation}
This is a convex combination of the lattice points of $P$ and hence lies in $P$. 

In more detail, the Fubini-Study metric on $\PP^{s-1}$ gives a symplectic $2$-form whose cohomology class is $\mathcal{O}_{\PP^{s-1}}(1)$ and has moment map given by \eqref{pdmm}.  Pulling this back via \eqref{wt1emb} gives a symplectic $2$-form whose cohomology class is the first Chern class of $\mathcal{O}_{X_P}(D_P)$ and, as proved in \cite[6.6]{cannas}, the moment map is indeed given by \eqref{wt1mm}.

An important variation is when we assign a positive weight $w_m$ to every lattice point $m \in \A = P\cap M$.  In this case, we get the weighted embedding
\begin{equation}
\label{wtembedding}
[\sqrt{w_m}\hskip1pt \chi^m]_{m \in \mathcal{A}} : X_P \longrightarrow \PP^{s-1}, \quad x \in X_P \longmapsto [\sqrt{w_m}\hskip1pt\chi^m(x)]_{m \in \A} \in \PP^{s-1}.
\end{equation}
The square root appears because $|\sqrt{w_m}\hskip1pt\chi^{m}(x)|^2 = w_m |\chi^{m}(x)|^2$.  Since this map is equivariant with respect to the torus actions on $X_P$ and $\PP^{s-1}$, the arguments of \cite[6.6]{cannas} adapt easily to prove the following.

\begin{proposition}[Weighted Moment Maps]
\label{wtmomentprop}
For any choice of positive weights $w = (w_m)_{m \in \A}$, $X_P$ has a moment map
\[
\mu_{\text{\rm FS}, w}(x) =   \frac1{\sum_{m \in \A} w_m\hskip1pt |\chi^{m}(x)|^2} \sum_{m\in \A} w_m\hskip1pt |\chi^{m}(x)|^2 \hskip1pt m \in \mr.
\]
\end{proposition}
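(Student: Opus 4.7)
My plan is to follow the argument sketched in \cite[\S6.6]{cannas} for the unweighted embedding, observing that the weighted map $\phi_w := [\sqrt{w_m}\hskip1pt\chi^m]_{m\in\A}$ of \eqref{wtembedding} differs from $[\chi^m]_{m\in\A}$ of \eqref{wt1emb} only by the coordinate-wise rescaling $[y_m]\mapsto[\sqrt{w_m}\hskip1pt y_m]$, which is a $(\C^*)^\A$-equivariant biholomorphism of $\PP^{s-1}$.  Hence $\phi_w$ is still a $T$-equivariant closed embedding, and I would define $\varpi_{\mathrm{FS},w} := \phi_w^*\omega_{\mathrm{FS}}$; since $\phi_w(X_P)$ is a K\"ahler submanifold of $\PP^{s-1}$, this pullback is nondegenerate and hence symplectic, with cohomology class the first Chern class of $\mathcal{O}_{X_P}(D_P)$.

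Next, I would compute the moment map by pulling back the $(S^1)^\A$-moment map on $\PP^{s-1}$.  By \eqref{pdmm} (applied with $d = s-1$), the latter, suitably normalized, is $[y_m]\mapsto\frac{1}{\sum_{m'}|y_{m'}|^2}(|y_m|^2)_{m\in\A}$, taking values in the standard simplex $\{\sum x_m = 1\}\subseteq\R^\A$.  Naturality of moment maps under the equivariant map $\phi_w$ then yields
\[
\mu_{\mathrm{FS},w} \;=\; \pi\circ\mu_{\mathrm{FS},\PP^{s-1}}\circ\phi_w,
\]
where $\pi\colon\R^\A\to\mr$ is the transpose of the Lie algebra map $\nr\to\R^\A$, $n\mapsto(\langle m,n\rangle)_{m\in\A}$, induced by the torus homomorphism $T\to(\C^*)^\A/\C^*$ attached to the characters $\chi^m$.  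Explicitly, $\pi\big((x_m)_{m\in\A}\big) = \sum_{m\in\A} x_m\hskip1pt m$.

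To finish, I would substitute $y_m = \sqrt{w_m}\hskip1pt\chi^m(x)$, so that $|y_m|^2 = w_m|\chi^m(x)|^2$, and the projected moment map above reduces directly to the claimed formula.  The one genuine subtlety, and hence where I expect to spend the most care, is that the $(S^1)^\A$-action on $\PP^{s-1}$ is ineffective (the diagonal $S^1$ acts trivially), so its moment map is well-defined only up to translation in $\R^\A/\R\cdot\mathbf{1}$; the normalization in \eqref{pdmm} is precisely the one under which $\pi\circ\mu_{\mathrm{FS},\PP^{s-1}}$ lands in the convex hull of $\A$, consistent with Delzant's theorem that $\mu_{\mathrm{FS},w}(X_P)=P$.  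Apart from this bookkeeping, every step is a routine transcription of the unweighted argument.
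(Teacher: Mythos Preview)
Your proposal is correct and follows essentially the same approach as the paper: the paper does not give a separate proof but simply remarks (just before stating the proposition) that since the weighted embedding \eqref{wtembedding} is equivariant with respect to the torus actions on $X_P$ and $\PP^{s-1}$, the arguments of \cite[6.6]{cannas} adapt easily. Your proposal is a faithful elaboration of exactly that adaptation---pulling back the Fubini-Study moment map along the equivariant embedding and projecting via the linear map induced on Lie algebra duals---so there is nothing to add.
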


Note that \eqref{wt1mm} is the special case where the weights are all equal to $1$.  This is the version stated in \cite[\S12.2]{cls}, where it is called ``the'' moment map.  We now see that \eqref{wt1mm} is one of infinitely many moment maps of $X_P$.

Here is a way to understand why there are so many.  Multiplying the coordinates of $\PP^{s-1}$ by $(\sqrt{w_m})_{m\in \A}$ is a automorphism of $\PP^{s-1}$, but the Fubini-Study metric is not invariant under this coordinate change.  Thus varying the weights in the embedding of $X_P$ gives a family of symplectic $2$-forms on $X_P$, all cohomologous but each with its own moment map as given in Proposition~\ref{wtmomentprop}.

\subsection{The Motivating Question} Question~\ref{Q1} can now be stated more precisely as asking when are there weights $w = (w_m)_{w\in \A}$ such that the weighted moment map $\mu_{\text{FS}, w}$ of Proposition~\ref{wtmomentprop} is equal to the quotient moment map $\mu_\text{quot}$ of Proposition~\ref{quotmomentprop}.  In Section~\ref{strictsection}, we explore some answers to this question.

\subsection{The Structure of Weighted Moment Maps} 
\label{structuresubsection}
The  formula for $\mu_{\text{FS},w}$ given in Proposition~\ref{wtmomentprop} has a lovely structure we now explore.  This will require three separate maps.  The first is the \emph{norm-squared map}.  In Appendix~\ref{nonnegApp}, we explain how  $\C \to \R_{\ge0}$ defined by $z \mapsto |z|^2$  generalizes to  
\begin{equation}
\label{XPnormsq}
|{\bullet}|^2 : X_P \to (X_P)_{\ge0},
\end{equation}
 where $(X_P)_{\ge0}$ is the \emph{non-negative part} of $X_P$.

The second map comes from a variant of weighted embedding \eqref{wtembedding}, where 
\[
x \in X_P \longmapsto [w_m\hskip1pt\chi^m(x)]_{m \in \A} \in \PP^{s-1},
\]
which we write more compactly as $[w\chi]_\A : X_P \to \PP^{s-1}$.  This map induces (by abuse of notation) a map
\begin{equation}
\label{wtembR}
[w\chi]_\A : (X_P)_{\ge0} \to \PP^{s-1}_{\ge0}
\end{equation}
because the weights are all positive.

For the third map, we begin with a definition.

\begin{definition}
\label{tau-definition}  
Garcia-Puente and Sottile \cite{SG} define the \emph{tautological map}
\[
\tau_\A : \mathbb{P}^{s -1} \dashrightarrow  \PP(\C\oplus M_\C) , \quad \tau_\A\big([x_m]_{m \in \A}\big) = \big[\sum_{m \in \A} x_m : \sum_{m \in \A} x_m\hskip1pt  m \big],
\]
where $\PP(\C\oplus M_\C) \simeq \PP^d$ is the projective space of $\C\oplus M_\C \simeq \C^{d+1}$.   
\end{definition}

Since a point $[x_m]_{m \in \A}$ in $\PP^{s-1}_{\ge0}$ must satisfy $\sum_{m \in \A} x_m > 0$,  $\tau_\A$ induces (by abuse of notation) a map
\begin{equation}
\label{tautologicalR}
\tau_\A : \PP^{s-1}_{\ge0} \longrightarrow M_\R, \quad \tau_\A\big([x_m]_{m \in \A}\big) = \frac{1}{\sum_{m \in \A} x_m} \sum_{m \in \A} x_m\hskip1pt m.
\end{equation}

With these three maps in hand, we get the following formula  for the weighted moment map:

\begin{proposition} 
\label{wtstructure}
The weighted moment map from Proposition~\ref{wtmomentprop} can be written
\[
\mu_{\text{\rm FS},w} = \tau_\A \circ [w\chi]_\A \circ |{\bullet}|^2 : X_P \longrightarrow \mr
\]
using the maps  $|{\bullet}|^2$ from \eqref{XPnormsq},  $[w\chi]_\A$ from \eqref{wtembR}, and $\tau_\A$ from \eqref{tautologicalR}. 
\end{proposition}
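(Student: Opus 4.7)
The plan is to verify the identity by chasing a point $x \in X_P$ through the three maps in order and checking that the final output agrees with the formula for $\mu_{\text{FS},w}(x)$ from Proposition~\ref{wtmomentprop}. Since each of the three maps is already defined, the proposition amounts to unpacking and matching definitions, with one key substantive ingredient: the compatibility of characters with the norm-squared map.

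First, I would apply $|{\bullet}|^2 : X_P \to (X_P)_{\ge 0}$ to produce $|x|^2$. Next, I would apply the weighted character map $[w\chi]_\A$ of \eqref{wtembR} to obtain a point of $\PP^{s-1}_{\ge 0}$. The crucial observation here is that on a toric variety the norm-squared map is characterized, on the torus, by the rule $\chi^m(|x|^2) = |\chi^m(x)|^2$ for each $m \in M$, and this extends to all of $X_P$ via the functorial construction of the non-negative part discussed in Appendix~\ref{nonnegApp}. Granted this,
\[
[w\chi]_\A(|x|^2) = \bigl[w_m\, \chi^m(|x|^2)\bigr]_{m \in \A} = \bigl[w_m\, |\chi^m(x)|^2\bigr]_{m \in \A} \in \PP^{s-1}_{\ge 0}.
\]
Finally, applying $\tau_\A$ from \eqref{tautologicalR} yields
\[
\tau_\A\bigl(\bigl[w_m\, |\chi^m(x)|^2\bigr]_{m \in \A}\bigr) = \frac{1}{\sum_{m \in \A} w_m\, |\chi^m(x)|^2} \sum_{m \in \A} w_m\, |\chi^m(x)|^2\, m,
\]
which is exactly $\mu_{\text{FS},w}(x)$ as given in Proposition~\ref{wtmomentprop}.

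The only point that is not purely formal is the identity $\chi^m \circ |{\bullet}|^2 = |\chi^m|^2$, and the denominator issue for $\tau_\A$. The former is built into the definition of the non-negative part and is recorded in the appendix; the latter requires that the sum $\sum_{m \in \A} w_m\,|\chi^m(x)|^2$ never vanish on $X_P$, which follows since the weighted embedding \eqref{wtembedding} is a morphism to $\PP^{s-1}$ (so at every $x$, at least one coordinate $\sqrt{w_m}\,\chi^m(x)$ is nonzero) and all weights are positive. I do not anticipate any genuine obstacle; the proposition is essentially a repackaging of the formula in Proposition~\ref{wtmomentprop} into a factorization that exhibits its structural content in terms of geometric modeling and algebraic statistics.
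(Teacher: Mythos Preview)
Your proposal is correct and follows essentially the same approach as the paper: chase a point through the three maps, invoke the appendix for the identity $\chi^m(|x|^2) = |\chi^m(x)|^2$ (which the paper phrases as ``$|{\bullet}|^2$ commutes with toric morphisms''), and match against the formula in Proposition~\ref{wtmomentprop}. Your extra remark on the nonvanishing of the denominator is a small addition the paper leaves implicit.
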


\begin{proof}
For $x \in X_P$, $(\tau_\A\circ[w\chi]_\A\circ |{\bullet}|^2)(x) = \tau_\A([w_m \chi^m(|x|^2)]_{m\in\A})$ is equal to 
\[
\frac1{\sum_{m \in \A} w_m\hskip1pt \chi^{m}(|x|^2)} \sum_{m\in \A} w_m\chi^{m}(|x|^2) \hskip1pt m.
\]
As noted in Appendix~\ref{nonnegApp}, $|{\bullet}|^2$ commutes with toric morphisms, so the composition equals
\[
\frac1{\sum_{m \in \A} w_m |\chi^{m}(x)|^2} \sum_{m\in \A} w_m \hskip1pt |\chi^{m}(x)|^2 \hskip1pt m.
\]
This is the formula for $\mu_{\text{\rm FS},w}$ from Proposition~\ref{wtmomentprop}.
\end{proof}

The reader may wonder why \eqref{wtembedding} uses $\sqrt{w_m}$ while \eqref{wtembR} uses $w_m$.  The reason is that $|{\bullet}|^2$ commutes with toric morphisms but \emph{not} with multiplication by weights.  More precisely, we have the commutative diagram
\[
\SelectTips{cm}{}
\xymatrix{
X_P \ar[r]^{[\sqrt{w}\chi_\A]} \ar[d]_{|{\bullet}|^2} & \PP^{s-1} \ar[d]^{|{\bullet}|^2}\\ (X_P)_{\ge0} \ar[r]^{[{w}\chi]_\A} & \PP^{s-1}_{\ge0}
}
\]
It follows that the formula in Proposition~\ref{wtstructure} can be restated as
\[
\mu_{\text{\rm FS},w} = \tau_\A  \circ |{\bullet}|^2 \circ [\sqrt{w}\chi]_\A : X_P \longrightarrow \mr.
\]

\section{Guillemin's Comparison Formula}
\label{guillemin}

In the previous section, we constructed the quotient symplectic form $\varpi_\text{quot}$ on the smooth toric variety $X_P$ and noted that the weighted embedding 
\begin{equation}
\label{weightemb}
[\sqrt{w}\chi]_\A : X_P \longrightarrow \PP^{s-1}, \quad x \in X_P \longmapsto (\sqrt{w_m}\hskip1pt\chi^m(x))_{m \in \A} \in \PP^{s-1}
\end{equation}
gave a symplectic form $\varpi_{\text{FS}, w}$ also on $X_P$.  

Here we state a 1994 formula of Guillemin \cite{guillemin} that relates the forms themselves. This Comparison Formula summarizes the relationship between the symplectic forms associated to the quotient and embedded presentations of the toric variety.


\subsection*{Guillemin's Formula}  To state this result, for $\rho \in \Sigma(1)$ and $p \in \mr$, define
\begin{equation}
\label{lattdist}
h_\rho(p) = \langle p,n_\rho\rangle + a_\rho.
\end{equation}
As noted earlier, this gives the lattice distance from $p \in \mr$ to the facet $F_\rho$ of $P$ with facet normal $n_\rho$.  Also set 
$n_P = \sum_{\rho} n_\rho$ and $h(p) = \langle p,n_P\rangle$ for $p \in \mr$. 

In this notation, (5.8) from Guillemin \cite{guillemin} gives the following theorem.

\begin{theorem}[Comparison Formula]
\label{guilleminthm}
Fix positive  weights $w = (w_m)_{m \in \A}$ as in \eqref{weightemb} and restrict the symplectic forms $\varpi_\text{quot}$ and $\varpi_{\text{FS}, w}$ to the torus $T \subseteq X_P$.  Then
\[
\varpi_{\text{FS}, w}\res{T}\hskip-1pt = \varpi_\text{\rm quot}\res{T} \hskip-1pt+i\partial\overline{\partial}\bigg(\!\hskip-.5pt{-}h\circ\mu_\text{\rm quot}\res{T}\hskip-1pt +
\log\Big( \!\hskip-1pt\sum_{m \in \A} {w}_m \hskip.5pt e^{h(m)}  \prod_{\rho} (h_\rho\circ \mu_\text{\rm quot}\res{T})^{h_\rho(m)}\Big)\!\hskip-1pt\bigg)\hskip-1pt.
\]
\end{theorem}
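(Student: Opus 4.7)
The plan is to exhibit both symplectic forms as $i\partial\overline{\partial}$ of explicit K\"ahler potentials on the open torus $T \subseteq X_P$ and then subtract. Since $T \simeq (\C^*)^d$ is Stein, each real $(1,1)$-form admits a global K\"ahler potential, unique modulo pluriharmonic functions (which $i\partial\overline{\partial}$ kills). For $\varpi_{\text{FS},w}$, pulling back the standard Fubini--Study potential $\log\sum_i|z_i|^2$ from $\PP^{s-1}$ along the weighted embedding \eqref{weightemb} immediately gives
\[
\varpi_{\text{FS},w}\res{T} = i\partial\overline{\partial}\phi_{\text{FS},w}, \qquad \phi_{\text{FS},w} = \log\sum_{m\in\A} w_m|\chi^m|^2.
\]

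For $\varpi_{\text{quot}}$, I would invoke Guillemin's Legendre-transform description \cite{guillemin}. In log-coordinates $y$ on $T$ (so that $|\chi^m|^2 = e^{\langle y, m\rangle}$ in the appropriate normalization), Guillemin produces a strictly convex K\"ahler potential $\phi_{\text{quot}}(y)$ whose gradient is $\mu_{\text{quot}}$, and whose Legendre dual on $P^\circ$ is the symplectic potential $g(p) = \sum_\rho h_\rho(p)\log h_\rho(p)$. Differentiating $g$ gives $y = \sum_\rho n_\rho\bigl(\log h_\rho + 1\bigr)$ at $p = \mu_{\text{quot}}$, and pairing with $m\in\A$ via $\langle m,n_\rho\rangle = h_\rho(m)-a_\rho$ and $\langle m,n_P\rangle = h(m)$ yields the key identity
\[
|\chi^m|^2 = C(x)\cdot e^{h(m)}\prod_\rho\bigl(h_\rho\circ\mu_{\text{quot}}\bigr)^{h_\rho(m)},
\]
where $C(x) = e^{-\sum_\rho a_\rho}\prod_\rho(h_\rho\circ\mu_{\text{quot}})^{-a_\rho}$ is independent of $m$. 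The Legendre duality $\phi_{\text{quot}} = \langle y,p\rangle - g(p)$ simplifies, using $\sum_\rho h_\rho = h + \sum_\rho a_\rho$, to $\phi_{\text{quot}} \equiv h\circ\mu_{\text{quot}} - \sum_\rho a_\rho\log(h_\rho\circ\mu_{\text{quot}})$ modulo constants, so that $\log C(x) \equiv \phi_{\text{quot}} - h\circ\mu_{\text{quot}}$.

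Factoring $C(x)$ out of the sum in $\phi_{\text{FS},w}$ then yields
\[
\phi_{\text{FS},w} - \phi_{\text{quot}} \equiv -h\circ\mu_{\text{quot}} + \log\sum_{m\in\A} w_m\, e^{h(m)}\prod_\rho(h_\rho\circ\mu_{\text{quot}})^{h_\rho(m)} \pmod{\text{pluriharmonic}},
\]
and applying $i\partial\overline{\partial}$ recovers the Comparison Formula. The main obstacle is reconciling Guillemin's conventions with the normalizations used here---especially the factor $\tfrac12$ in $\mu_0 = \tfrac12|z|^2 - a$ from \eqref{mutrans}, the choice of Lie-algebra splitting in Lemma~\ref{mudef}, and the precise dictionary between holomorphic log-coordinates $u = y + i\theta$ on $T$ and the real coordinates $y$. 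Conceptually, the summand $-h\circ\mu_{\text{quot}}$ encodes the failure of $n_P = \sum_\rho n_\rho$ to vanish; its vanishing is precisely the sum-of-facet-normals condition that later reappears as hypothesis~(2) of Theorem~\ref{XML}.
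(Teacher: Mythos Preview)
The paper does not prove Theorem~\ref{guilleminthm} at all: it is stated as a direct quotation of formula~(5.8) from Guillemin's paper \cite{guillemin}, with no argument supplied. So there is no ``paper's own proof'' to compare against.

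Your sketch is a reasonable outline of how Guillemin's argument actually runs, and the key identity you isolate,
\[
|\chi^m|^2 = C(x)\cdot e^{h(m)}\prod_\rho\bigl(h_\rho\circ\mu_{\text{quot}}\bigr)^{h_\rho(m)},
\]
is essentially the same computation that the present paper carries out later in the proof of Proposition~\ref{Km=tx-proposition}, where it appears as $|\chi^m(x)|^2\,c(z) = 2^{\langle m,n_P\rangle}\beta_m(p)$ with $c(z) = \prod_\rho(\tfrac12|z_\rho|^2)^{a_\rho}$. Your $e^{h(m)}$ and the paper's $2^{\langle m,n_P\rangle}$ differ only by the normalization of the log-coordinate $y$, which is exactly the convention issue you flag. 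The Legendre-transform step you give, reducing $\phi_{\text{quot}}$ to $h\circ\mu_{\text{quot}} - \sum_\rho a_\rho\log(h_\rho\circ\mu_{\text{quot}})$ modulo constants, is correct and is the content of Guillemin's Theorem~4.1 in \cite{guillemin}. So your proposal is sound as a proof sketch; you have correctly identified that the only genuine work left is bookkeeping the factor of $\tfrac12$ from \eqref{mutrans} and the base of the exponential, neither of which affects the $i\partial\overline{\partial}$ of the final expression.
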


Note that the log in this expression is defined since $h_\rho$ is positive on the interior $P^\circ$ and the weights are positive.

In the next section, we will relate this formula to Krasauskas' toric blending functions \cite{krasauskas} and apply it to our motivating question in Corollary \ref{guillemincor}.

\section{Strict Linear Precision}
\label{strictsection}

In this section, $P$ will be a full-dimensional lattice polytope in $\mr$ with lattice points $\A = P\cap M$.  

Our main result, Theorem~\ref{XML}, will be proved in Section~\ref{hornsection}.  However, this theorem can be brought to bear on questions about strict linear precision and moment maps in highly non-trivial ways. Therefore we state and use Theorem~\ref{strictequivalence} without proof until Theorem~\ref{XML} for which it is an immediate consequence.

\subsection{Toric Blending Functions}

For $\rho \in \Sigma(1)$ and $p \in \mr$, define
\[
h_\rho(p) = \langle p,n_\rho\rangle + a_\rho.
\]
As noted earlier, this gives the lattice distance from $p \in \mr$ to the facet $F_\rho$ of $P$ with facet normal $n_\rho$.   Here are some definitions due to Krasauskas \cite{krasauskas}.

\begin{definition} \ 
\label{krasauskasblending}
\begin{enumerate}
\item[1.] Given a lattice point $m \in \A$ and a point $p \in P$, define 
\[
\beta_m(p) = \prod_{\rho} h_\rho(p)^{h_\rho(m)}.
\]
\item[2.] Given positive weights ${w} = ({w}_m)_{m \in \A}$, we define 
\[
\beta_{{w}}(p) = \sum_{m \in \A} {w}_m \hskip1pt \beta_m(p).
\]
The functions ${w}_m \hskip1pt \beta_m/\beta_{{w}}$ for $m \in \A$ are the \emph{toric blending functions} of $P$.  
\item[3.] Given \emph{control points} $\{Q_m\}_{m\in \A} \subset \R^n$, we get the \emph{toric patch} 
\[
p \in P \longmapsto \frac{1}{\beta_w(p)}\sum_{m \in \A} w_m \beta_m(p)\hskip1pt Q_m \in \R^n.
\]
\end{enumerate}
We use the convention that $0^0=1$.  Thus implies $h_\rho(p)^{h_\rho(m)} = 0^0 = 1$ when $p, m \in F_\rho$ and ensures that $\beta_m$ and $\beta_w$ are continuous in $p$.
\end{definition}

Note that $\sum_{m\in \A} {w}_m \hskip1pt \beta_m/\beta_{{w}} = 1$ by the definition of $\beta_{{w}}$.  It follows that the image of the toric patch lies in the convex hull of the control points $Q_m$. Toric blending functions and toric patches were introduced by Krasauskas in \cite{krasauskas}. See Garcia-Puente and Sottile \cite{SG} for a discussion of how toric blending functions are used in geometric modeling.

\subsection{Strict Linear Precision} 
A natural choice for the control points in Definition~\ref{krasauskasblending} is $Q_m =m$ for all $m \in \A$.  This gives the following map defined by  Krasauskas \cite{krasauskas}. 

\begin{definition}
\label{K-definition}
Define $K_w : P \to P$ by 
\begin{equation}
\label{K-equation}
K_w(p) =  \frac{1}{\beta_w(p)} 
\sum_{m \in \A} w_m\hskip1pt \beta_m(p)\hskip1pt m.
\end{equation}
\end{definition}

Krasauskas proved that $K_w$ is an analytic isomorphism.  
An important special case 
is when $K_w$ is the identity function:

\begin{definition} 
\label{slpdef}
A full-dimensional lattice polytope $P \subseteq \mr$ has \emph{strict linear precision} for positive weights ${w} = ({w}_m)_{m \in \A}$ provided that
\[
p = K_w(p) \text{ for all } p \in P.
\]
Expanded, this equation is
\[
p = \frac1{\beta_{w}(p)} \sum_{m \in \A} {w}_m \hskip1pt\beta_m(p)\hskip1pt m = \frac1{\sum_{m \in \A} {w}_m \hskip1pt \beta_m(p)} \sum_{m \in \A} {w}_m \hskip1pt \beta_m(p)\hskip1pt m
\]
for all $p \in P$.
\end{definition}

In \cite{SG}, Garcia-Puente and Sottile define a more general notion of linear precision, called \emph{rational linear precision}, that includes strict linear precision as a special case. We will discuss rational linear precision in Section~\ref{rationalsection}.

One of our major results is the following theorem, whose proof will be given in Section~\ref{hornsection}.

\begin{theorem}[Corollary of Theorem \ref{XML}]
\label{strictequivalence}
Let $P \subseteq \mr$ be a full-dimensional lattice polytope with lattice points $\A = P\cap M$ and positive weights $w = (w_m)_{m\in \A}$.  Then the following are equivalent:
\begin{enumerate}
\item $P$ has strict linear precision for $w$. 
\item $\sum_\rho n_\rho =0$ and $\beta_w(p)= \sum_{m\in \A} w_m\hskip1pt \beta_m(p)$ is a nonzero constant.
\end{enumerate}
\end{theorem}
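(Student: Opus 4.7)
The plan is to prove the equivalence first under the additional hypothesis that $X_P$ is smooth, by invoking Guillemin's Comparison Formula (Theorem~\ref{guilleminthm}), and then to remove smoothness by appealing to the main Theorem~\ref{XML} in Section~\ref{hornsection}. The central ingredient is the identity
\[
\mu_{\text{FS},w} = K_w \circ \mu_\text{quot},
\]
which will come from combining Proposition~\ref{wtstructure} with the (yet-to-be-stated) Proposition~\ref{Km=tx-proposition} identifying $K_w\circ\mu_\text{quot}$ with $\tau_\A\circ[w\chi]_\A\circ|{\bullet}|^2$. Since $\mu_\text{quot}$ surjects onto $P$ by Atiyah-Guillemin-Sternberg, strict linear precision ($K_w = \mathrm{id}_P$) is equivalent, in the smooth case, to the equality of moment maps $\mu_{\text{FS},w} = \mu_\text{quot}$.

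For the direction $(2)\Rightarrow(1)$, I would assume $\sum_\rho n_\rho = 0$ and $\beta_w \equiv c > 0$. Then $h(p) = \langle p, \sum_\rho n_\rho\rangle$ vanishes identically, so in particular $h(m) = 0$ for every $m \in \A$, and the function
\[
F = -h\circ\mu_\text{quot} + \log\Bigl(\sum_{m \in \A} w_m\, e^{h(m)}\prod_\rho (h_\rho\circ\mu_\text{quot})^{h_\rho(m)}\Bigr)
\]
appearing in Guillemin's Comparison Formula reduces to the constant $\log c$. Hence $\varpi_{\text{FS},w}|_T = \varpi_\text{quot}|_T$, the corresponding moment maps agree up to an additive constant, and since both have image $P$ that constant is zero. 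Thus $K_w \circ \mu_\text{quot} = \mu_\text{quot}$, from which strict linear precision follows.

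For the direction $(1)\Rightarrow(2)$, I would assume $K_w = \mathrm{id}_P$, so that $\mu_{\text{FS},w} = \mu_\text{quot}$. Equality of moment maps corresponds to equality of the $T_\mathbb{R}$-invariant K\"ahler potentials on $T$ up to an additive constant, which when applied to Guillemin's comparison forces $F$ itself to be constant. Exponentiating the resulting identity $F = \log C$ yields
\[
\sum_{m\in\A} w_m\, e^{h(m)}\beta_m(p) = C\, e^{\langle p, n_P\rangle}
\]
for all $p$ in the interior of $P$ and some positive constant $C$. The left-hand side is a polynomial in $p$, while the right-hand side is a polynomial only when $n_P = \sum_\rho n_\rho = 0$, giving the first half of (2). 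Under this condition $h \equiv 0$ and the identity collapses to $\beta_w(p) = C$, which is the second half of (2).

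The main obstacle is removing the smoothness hypothesis, since Guillemin's Comparison Formula requires $X_P$ smooth. The fully general equivalence is established in Section~\ref{hornsection} via Theorem~\ref{XML}, whose proof bypasses symplectic geometry by using Huh's Horn parametrization of very affine varieties of maximum likelihood degree one together with the essentially unique minimal Horn matrix (Proposition~\ref{RedHornMatrixProp}); the row structure of that matrix directly encodes both the vanishing of $\sum_\rho n_\rho$ and the constancy of $\beta_w$.
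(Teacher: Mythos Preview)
Your proposal is structurally redundant: you concede in the final paragraph that the non-smooth case requires Theorem~\ref{XML}, but Theorem~\ref{XML} already establishes $(1)\Leftrightarrow(2)$ for \emph{all} full-dimensional lattice polytopes, smooth or not. The paper's proof of Theorem~\ref{strictequivalence} is accordingly a single sentence---it is literally labeled a corollary of Theorem~\ref{XML} and consists of the equivalence of items (1) and (2) there, proved via Huh's Horn parametrization and the minimal Horn matrix, with no symplectic input. Your symplectic argument for the smooth case is therefore superfluous.

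More seriously, the symplectic argument for $(1)\Rightarrow(2)$ contains an error. You claim the identity $\mu_{\text{FS},w} = K_w\circ\mu_\text{quot}$, but Proposition~\ref{Km=tx-proposition} actually gives $K_{\mathbf{w}}\circ\mu_\text{quot} = \mu_{\text{FS},w}$ with the \emph{modified} weights $\mathbf{w}_m = 2^{\langle m,n_P\rangle}w_m$. Thus $K_w = \mathrm{id}_P$ does not yield $\mu_{\text{FS},w} = \mu_\text{quot}$; it yields $\mu_\text{quot} = \mu_{\text{FS},w'}$ for $w'_m = 2^{-\langle m,n_P\rangle}w_m$. This is exactly the discrepancy the paper flags in the remarks following Corollary~\ref{guillemincor}. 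The argument can be repaired: applying Guillemin's formula with weights $w'$ and arguing that $F$ is bounded on $T$ (hence, being pluriharmonic and $T_\R$-invariant, constant) gives $\sum_m w_m(e/2)^{\langle m,n_P\rangle}\beta_m(p) = Ce^{\langle p,n_P\rangle}$, and the polynomial-versus-exponential comparison still forces $n_P=0$, after which the extra factors disappear and $\beta_w = C$. But as written you misstate the key identity, and in any case the detour buys nothing once you invoke Theorem~\ref{XML}.
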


Using this theorem, it is easy to give examples of polytopes with strict linear precision.

\begin{example}[\protect{\cite[Ex.\ 3.7]{SG}}] 
\label{simplex strict}
Let us use Theorem~\ref{strictequivalence} to show that the simplex $k\Delta_d$ has strict linear precision.  The facet normals for $k\Delta_d$ are 
\[
n_0 = -e_1-\cdots-e_d,\ n_1 = e_1, \dots, n_d = e_d,
\]
which obviously sum to zero.  It remains to find weights $w$ so that $\beta_w(p)$ is constant.  Given $m = (a_1,\dots, a_d) \in \A = k\Delta_d\cap\Z^d$, set $|m| = a_1+\cdots+a_d$ and define $\binom{k}{m}$ to be the multinomial coefficient
\[
\binom{k}{m} = \binom{k}{k-|m|,\ a_1, \!\ \dots\ ,  a_n}.
\]
We now compute $\beta_m$.  Given a point $p = (x_1,\dots,x_d) \in k\Delta_d$, one finds that
\begin{align*}
h_0(p) &= \langle p,n_0\rangle + k = k-x_1-\cdots-x_d\\
h_i(p) &= \langle p,n_i\rangle + 0 = x_i,\ i = 1,\dots,d.
\end{align*}
If we set $x_0 = h_0(p) = k - x_1-\cdots-x_d$, then we obtain
\[
\beta_m(p) = h_0(p)^{h_0(m)} h_1(p)^{h_1(m)} \cdots h_n(p)^{h_n(m)} = x_0^{k-|m|} x_1^{a_1} \cdots x_n^{a_n} =  x_0^{k-|m|} x^m.
\]
For weights ${w}_m = \binom{k}{m}$, it follows that
\[
\beta_w(p) = \sum_{m \in \A} {w}_m\hskip1pt \beta_m(p) = 
\sum_{m \in k\Delta_d\cap \Z^d} \binom{k}{m} x_0^{k-|m|} x^m = (x_0+\cdots+x_n)^k = k^k.
\]
By Theorem~\ref{strictequivalence}, $k\Delta_d$ has strict linear precision with weights $w_m = \binom{k}{m}$.  (It is also possible to prove directly that $k\Delta_d$ has strict linear precision for these weights.)
\end{example}

\begin{example} 
\label{productexample} 
Suppose that lattice polytopes $P \subseteq \mr$ and $Q \subseteq M'_\R$ have strict linear precision with weights $w = (w_m)_{m \in P\cap M}$ and $w' = (w'_{m'})_{m' \in Q\cap M'}$.  Let us show that $P\times Q$ has strict linear precision where $(m,m') \in (P\times Q)\cap (M\times M') = (P\cap M)\times (Q\times M')$ has weight $w_m w'_{m'}$.  

The polytopes $P,Q$ have normal fans $\Sigma,\Sigma'$ and facet presentations
\begin{align*}
P &= \{m \in \mr \mid \langle m,n_\rho\rangle \ge -a_\rho,\ \rho \in \Sigma(1)\}\\
Q &= \{m' \in M'_\R \mid \langle m',n'_{\rho'}\rangle \ge -a'_{\rho'},\ \rho' \in \Sigma'(1)\}.
\end{align*}
This makes it easy to write the facet presentation for $P\times Q$, from which we see that the facet normals of $P\times Q$ are 
\[
\big\{ (n_\rho,0) \mid \rho \in \Sigma(1)\big\} \cup \big\{ (0,n'_{\rho'}) \mid \rho' \in \Sigma'(1)\big\} 
\]
Since $\sum_\rho n_\rho = 0 \in M$ and $\sum_{\rho'} n'_{\rho'} = 0 \in M'$ by Theorem~\ref{strictequivalence}, it follows that the facet normals of $P\times Q$ also sum to zero in $M \times M'$.  

Using the facet presentation for $P\times Q$, it is also easy to see that
\begin{equation}
\label{blendproduct}
\beta_{m,m'}^{P\times Q}(p,p') = \beta_m^P(p)\hskip1pt\beta_{m'}^Q(p').
\end{equation}
For the weights $w\times w' = (w_m w'_{m'})_{(m,m') \in (P\times Q)\cap (M\times M')}$, one computes that for $(p,p') \in P\times Q$, 
\begin{align*}
\beta_w^P(p)\hskip1pt\beta_{w'}^Q(p') &= \Big(\sum_{m\in P\cap M} {w}_m \hskip1pt\beta^P_m(p)\Big) \Big(\sum_{m'\in Q\cap M'} {w'}_{m'} \hskip1pt\beta^Q_{m'}(p')\Big)\\
&= \!\!\!\sum_{(m,m') \in (P\times Q)\cap (M\times M')}\!\!\!\! w_m w'_{m'}\hskip1pt\beta_m^P(p)\hskip1pt\beta_{m'}^Q(p') 
=\beta_{w\times w'}^{P\times Q}(p,p'), 
\end{align*}
where the last equality uses \eqref{blendproduct}.  By Theorem~\ref{strictequivalence}, $\beta_w^P$ and $\beta_{w'}^Q$ are nonzero constants.  Hence the same is true for $\beta_{w\times w'}^{P\times Q}$, which by the same theorem implies that $P\times Q$ has strict linear precision for weights $w\times w'$. 
\end{example}

Combining these two examples gives the following result.

\begin{proposition}
\label{exampleprop}
The polytopes $k_1\Delta_{d_1}\!\times \cdots \times k_r\Delta_{d_r}$ have strict linear precision.
\end{proposition}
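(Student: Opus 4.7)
The plan is to obtain this as a direct consequence of the two preceding examples by induction on the number of factors $r$. Example~\ref{simplex strict} already supplies the base case: each simplex $k\Delta_d$ has strict linear precision with the multinomial weights $w_m = \binom{k}{m}$, because the facet normals visibly sum to zero and the identity $\beta_w(p) = (x_0+x_1+\cdots+x_d)^k = k^k$ is a nonzero constant, so Theorem~\ref{strictequivalence} applies. For the inductive step, Example~\ref{productexample} shows that if $P$ has strict linear precision with weights $w$ and $Q$ has strict linear precision with weights $w'$, then $P\times Q$ has strict linear precision with the product weights $(w_m w'_{m'})$.

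First, I would set $P_i = k_i\Delta_{d_i}$ and take as weights on $P_i$ the multinomials $w^{(i)}_{m} = \binom{k_i}{m}$ for $m \in P_i\cap\Z^{d_i}$, which give $P_i$ strict linear precision by Example~\ref{simplex strict}. Then I would argue by induction: assuming that $P_1\times\cdots\times P_{r-1}$ has strict linear precision for the product weight $w^{(1)}\times\cdots\times w^{(r-1)}$, Example~\ref{productexample} applied with $P = P_1\times\cdots\times P_{r-1}$ and $Q = P_r$ immediately yields that $P_1\times\cdots\times P_r$ has strict linear precision with the product weight $w^{(1)}\times\cdots\times w^{(r)}$, where the weight attached to a lattice point $(m_1,\ldots,m_r)$ is simply $\prod_{i=1}^{r} \binom{k_i}{m_i}$.

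There is essentially no obstacle here, since both ingredients have already been established: Example~\ref{productexample} verified both that the facet normals of a product sum to zero whenever this holds in each factor, and that $\beta^{P\times Q}_{w\times w'} = \beta^P_w \cdot \beta^Q_{w'}$, so the constancy of $\beta_w$ is preserved under products. One minor thing to note is that the identification $(P\times Q)\cap (M\times M') = (P\cap M)\times (Q\cap M')$ used in Example~\ref{productexample} extends to multiple factors without incident. Thus the result follows by a single induction, with the explicit weight on $(m_1,\ldots,m_r)$ being the product of multinomial coefficients $\prod_i \binom{k_i}{m_i}$ and with $\beta_w \equiv \prod_i k_i^{k_i}$.
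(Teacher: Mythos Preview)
Your proof is correct and follows exactly the approach the paper intends: the paper simply states that the result follows by combining Examples~\ref{simplex strict} and \ref{productexample}, and your induction on the number of factors is the natural way to make that combination precise.
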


As noted in  \cite{SG}, the polytopes of Proposition~\ref{exampleprop} are sometimes called \emph{B\'ezier simploids}.  These are the only examples we know of polytopes with strict linear precision.  Theorem~\ref{strictequivalence} shows that strict linear precision is likely to be rare.  This is because while it is easy to give examples with $\sum_\rho n_\rho = 0$, the requirement that $\beta_w$ be constant is much stronger.  Based on this, we make the following conjecture.

\begin{conjecture}
\label{simploidalconj}
The only lattice polytopes with strict linear precision are the B\'ezier simploids described in Proposition~\ref{exampleprop}.
\end{conjecture}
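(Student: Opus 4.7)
The plan is to prove Conjecture~\ref{simploidalconj} by combining Theorem~\ref{strictequivalence} with the Horn parametrization structure coming from Theorem~\ref{XML}, and then arguing inductively on dimension. By Theorem~\ref{strictequivalence} it suffices to show that if $\sum_\rho n_\rho = 0$ and there exist positive weights $w$ for which $\beta_w = \sum_m w_m \prod_\rho h_\rho^{h_\rho(m)}$ is constant on $P$, then $P$ is a B\'ezier simploid. The first step is the observation that the vanishing of $\sum_\rho n_\rho$ forces every monomial of $\beta_w$, viewed as a polynomial in the variables $t_\rho = h_\rho(p)$, to have the same total degree $D = \sum_\rho a_\rho$; thus $\beta_w$ is a homogeneous polynomial of degree $D$ in the $t_\rho$ that is constant on the $d$-dimensional affine image $\Phi(\mr) = \{(h_\rho(p))_\rho : p\in\mr\}\subseteq \R^{\Sigma(1)}$, which is already a very restrictive condition.

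Next I would invoke Theorem~\ref{XML} to conclude that $X_P$ has maximum likelihood degree one, so by Huh's theorem the positive part of $X_P$ admits a Horn parametrization, and by Proposition~\ref{RedHornMatrixProp} the associated reduced Horn matrix $H$ is essentially unique. The central step is to show that the extra constraint that $\beta_w$ is constant (not merely that the ML degree is one) forces $H$ to have a block-diagonal structure whose blocks are exactly the Horn matrices of the simplex factors $k_i\Delta_{d_i}$. I would carry this out by induction on $d$. The base case $d=1$ is immediate because every one-dimensional lattice polytope is $k\Delta_1$. For the inductive step I would analyze how $\beta_w$ restricts to a facet $F_\sigma$ of $P$: the restriction kills all monomials with $h_\sigma(m) > 0$, leaving a sum $\sum_{m\in F_\sigma\cap M} w_m \prod_{\rho\neq\sigma} h_\rho^{h_\rho(m)}$ which should be interpretable as a positive-weight blending decomposition of $F_\sigma$ relative to its intrinsic normal fan. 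If each facet inherits strict linear precision, then by the inductive hypothesis each facet is a B\'ezier simploid, giving strong combinatorial rigidity which, together with $\sum_\rho n_\rho = 0$ and the constancy of $\beta_w$ globally, should force $P$ itself to be a product of simplices.

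The hardest step will be the descent to facets. Although the restriction of $\beta_w$ to $F_\sigma$ is algebraically transparent, one must check both that the facet's intrinsic normal fan again satisfies the analogue of $\sum n_\rho = 0$ and that the restricted polynomial is a genuine positive-weight blending polynomial for $F_\sigma$'s own facet normals (not merely the ambient restrictions). Neither descent is automatic, and controlling them may require replacing the facet argument by a direct analysis of the reduced Horn matrix: one would show that any minimal Horn matrix whose columns are indexed by the lattice points of a polytope $P$ with strict linear precision must split as a direct sum of simplex Horn matrices, a statement which ultimately reduces to a purely combinatorial classification. A complementary approach, useful for gathering evidence and for isolating the obstruction, is to rule out specific families such as smooth hexagons and higher-dimensional zonotopes satisfying $\sum_\rho n_\rho = 0$ by direct computation; verifying in these cases that no positive weights can make $\beta_w$ constant would suggest the shape of the general obstruction and narrow down the possible structure of $H$.
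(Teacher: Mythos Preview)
The statement you are trying to prove is a \emph{conjecture} in the paper, not a theorem. The paper does not offer a proof in general; it only remarks that the conjecture is trivial in dimension~$1$ and proves it in dimension~$2$ (Theorem~\ref{dim2slp}) by invoking the classification of lattice polygons with rational linear precision due to Graf von Bothmer, Ranestad, and Sottile, and then eliminating the trapezoids via the condition $\sum_\rho n_\rho = 0$ from Theorem~\ref{strictequivalence}. There is therefore no ``paper's own proof'' to compare against.

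Your proposal is a plan rather than a proof, and you have correctly identified the main gap yourself: the descent to facets. Concretely, when you restrict $\beta_w$ to a facet $F_\sigma$, the surviving terms are $\sum_{m\in F_\sigma\cap M} w_m \prod_{\rho\neq\sigma} h_\rho(p)^{h_\rho(m)}$, but this product still runs over \emph{all} remaining facets of $P$, not only those $\rho$ for which $F_\rho\cap F_\sigma$ is a facet of $F_\sigma$. The extra factors $h_\rho(p)^{h_\rho(m)}$ coming from facets of $P$ not adjacent to $F_\sigma$ are nonconstant on $F_\sigma$ in general, so the restriction is not the intrinsic $\beta_{w'}$ of $F_\sigma$ and the inductive hypothesis does not apply as stated. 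Likewise, the condition $\sum_\rho n_\rho = 0$ for $P$ does not obviously descend to the intrinsic facet normals of $F_\sigma$. Finally, even granting that every facet is a simploid, the step ``all facets simploids $\Rightarrow$ $P$ is a simploid'' is a nontrivial combinatorial assertion that you have not addressed. The alternative route you sketch---showing directly that the minimal Horn matrix must be block-diagonal---is an interesting idea, but as written it is a restatement of the goal rather than a mechanism for achieving it.
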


This conjecture is trivial in dimension 1.  In dimension 2, it is a theorem based on the work of Graf von Bothmer, Ranestad and Sottile \cite{GRS}.

\begin{theorem}
\label{dim2slp}
The only lattice polygons $P$ with strict linear precision are the triangles $k\Delta_2$ and the rectangles $k\Delta_1 \times \ell \Delta_1$ for integers $k,\ell \ge 1$.
\end{theorem}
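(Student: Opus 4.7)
The plan is to combine Theorem~\ref{strictequivalence} with the classification of lattice polygons with rational linear precision obtained by Graf von Bothmer, Ranestad, and Sottile in \cite{GRS}. By Theorem~\ref{strictequivalence}, a lattice polygon $P \subseteq M_{\mathbb{R}}\simeq\mathbb{R}^2$ has strict linear precision for some positive weights $w$ if and only if (A) $\sum_\rho n_\rho = 0$ and (B) $\beta_w$ is a nonzero constant. Since strict linear precision is a special case of rational linear precision (see Section~\ref{rationalsection}), any polygon satisfying (A) and (B) must appear in the GRS list.

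Next I would invoke that list. Up to lattice equivalence, the lattice polygons in $\mathbb{R}^2$ with rational linear precision are the triangles $k\Delta_2$, the rectangles $k\Delta_1\times\ell\Delta_1$, and a family of (non-parallelogram) lattice trapezoids, the latter including the trapezoid analyzed in Section~\ref{rationalsection}. Proposition~\ref{exampleprop} already establishes that the simplices $k\Delta_2$ and the rectangles $k\Delta_1\times\ell\Delta_1$ have strict linear precision, so the remaining task is to exclude the trapezoid family.

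To rule out the trapezoids I would verify that condition (A) fails for them by a short geometric observation. A lattice trapezoid that is not a parallelogram has exactly one pair of parallel edges; the primitive inward facet normals to those two edges are negatives of each other and cancel in the sum. The remaining two edges are, by assumption, not parallel, so their primitive inward normals cannot be negatives of one another, and their contribution to $\sum_\rho n_\rho$ is a nonzero vector. Hence $\sum_\rho n_\rho \neq 0$, condition (A) fails, and no such trapezoid admits strict linear precision. This leaves precisely $k\Delta_2$ and $k\Delta_1\times\ell\Delta_1$, as claimed.

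The main obstacle, taken here as a black box, is the GRS classification itself; once that is available, the reduction from rational to strict linear precision via Theorem~\ref{strictequivalence} is the short facet-normal cancellation just described. An equivalent route would be to rule out the trapezoids via condition (B) by computing $\beta_w$ on the GRS trapezoids and showing that it cannot be made constant for any positive $w$, but checking (A) requires no calculation.
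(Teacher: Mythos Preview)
Your proposal is correct and follows essentially the same route as the paper: reduce to the GRS classification via the implication strict $\Rightarrow$ rational linear precision, then eliminate the trapezoid family by showing the facet normals fail to sum to zero via Theorem~\ref{strictequivalence}. The only cosmetic difference is that the paper computes $\sum_\rho n_\rho = -d e_2$ explicitly for the GRS trapezoids $\mathrm{Conv}(0,(a+db)e_1,be_2,ae_1+be_2)$, whereas you give the equivalent qualitative argument that the two non-parallel edges contribute non-cancelling normals.
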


\begin{proof}
Suppose that $P$ has strict linear precision.  Then, as we will see in Section~\ref{rationalsection}, it has rational linear precision, so that by \cite[Corollary 0.3]{GRS}, $P$ is $k\Delta_2$, $k\Delta_1 \times \ell \Delta_1$, or one of the trapezoids 
\[
\mathrm{Conv}(0,(a+db)e_1,be_2,ae_1+be_2)
\]
for integers $a,b,d \ge 1$.  The facet normals of such a trapezoid sum to $-de_2 \ne 0$, so that by Theorem~\ref{strictequivalence}, these trapezoids cannot have strict linear precision.
\end{proof}

\subsection{Relation to Moment Maps}
The connection between moment maps and the blending functions of Definition~\ref{krasauskasblending} is easy to state.   

\begin{proposition}
\label{Km=tx-proposition} 
With $n_P = \sum_\rho n_\rho$ and $\mathbf{w}_m = 2^{\langle m,n_P\rangle} w_m$, 
\begin{equation}
\label{Km=tx-equation}
K_{\mathbf{w}} \circ \mu_\text{\rm quot} = \mu_{\text{FS,w}}.
\end{equation}
\end{proposition}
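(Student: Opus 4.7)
The plan is to work on the level set $\mu^{-1}(0) \subseteq \C^{\Sigma(1)}$ and compare the two sides of \eqref{Km=tx-equation} on a common lift. Fix $x \in X_P$, set $p := \mu_\text{quot}(x) \in P$, and choose a lift $(z_\rho)_{\rho \in \Sigma(1)} \in \mu^{-1}(0)$. By Proposition \ref{quotmomentprop}, $\tfrac{1}{2}|z_\rho|^2 = h_\rho(p)$ for every $\rho \in \Sigma(1)$; since $\Gamma \subseteq (S^1)^{\Sigma(1)}$, each $|z_\rho|^2$ is $\Gamma$-invariant and so depends only on $x$, not on the chosen lift.

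The key step is to compute $|\chi^m(x)|^2$ on this lift. In the Cox ring, the global section $\chi^m \in H^0(X_P, \mathcal{O}_{X_P}(D_P))$ is represented by the monomial $\prod_\rho z_\rho^{h_\rho(m)}$: these monomials (for $m \in \A$) are exactly the sections of the class $[D_P]$ restricting to $\chi^m$ on the torus, and they are well defined since $h_\rho(m) \ge 0$ for $m \in P$. All of them carry the same $G$-weight, so the embedding $[\chi^m]_{m \in \A} : X_P \to \PP^{s-1}$ is well defined and the individual numbers $|\chi^m(x)|^2$ are determined up to a single overall, $m$-independent scalar. Modulo that scalar,
$$|\chi^m(x)|^2 \;=\; \prod_\rho |z_\rho|^{2 h_\rho(m)} \;=\; \prod_\rho \bigl(2\hskip1pt h_\rho(p)\bigr)^{h_\rho(m)} \;=\; 2^{\sum_\rho h_\rho(m)}\,\beta_m(p),$$
where the last equality uses the definition $\beta_m(p) = \prod_\rho h_\rho(p)^{h_\rho(m)}$.

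Since $\sum_\rho h_\rho(m) = \langle m, n_P\rangle + \sum_\rho a_\rho$, the prefactor $2^{\sum_\rho h_\rho(m)}$ equals $2^{\langle m, n_P\rangle}$ times a constant independent of $m$. Combining this with the definition $\mathbf{w}_m = 2^{\langle m, n_P\rangle} w_m$ gives $w_m\,|\chi^m(x)|^2 = C \cdot \mathbf{w}_m\,\beta_m(p)$ for some $m$-independent constant $C$ (which absorbs both $2^{\sum_\rho a_\rho}$ and the overall scalar from the choice of lift). Substituting into the formula of Proposition \ref{wtmomentprop}, $C$ cancels between numerator and denominator and we obtain
$$\mu_{\text{FS},w}(x) \;=\; \frac{\sum_{m \in \A} \mathbf{w}_m\,\beta_m(p)\, m}{\sum_{m \in \A} \mathbf{w}_m\,\beta_m(p)} \;=\; K_{\mathbf{w}}(p) \;=\; (K_{\mathbf{w}} \circ \mu_\text{quot})(x),$$
which is the desired identity.

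The one delicate point is the identification $|\chi^m(x)|^2 = \prod_\rho |z_\rho|^{2 h_\rho(m)}$: because $\chi^m$ is a section of a line bundle rather than a function on $X_P$, each $|\chi^m(x)|^2$ is only well defined up to a common $m$-independent factor. The Fubini-Study moment map formula is, however, a ratio insensitive to that factor, so once this is handled everything else is routine bookkeeping with $h_\rho$, $n_P$, and the definition of $\mathbf{w}_m$.
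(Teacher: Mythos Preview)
Your proof is correct and follows the same computation as the paper's: both use $\tfrac12|z_\rho|^2 = h_\rho(p)$ from Proposition~\ref{quotmomentprop} to convert $|\chi^m(x)|^2$ into $2^{\langle m,n_P\rangle}\beta_m(p)$ times an $m$-independent factor that cancels in the ratio. The only cosmetic difference is that the paper works on the torus with the honest function $\chi^m(x) = \prod_\rho z_\rho^{\langle m,n_\rho\rangle}$ and an explicit factor $c(z) = \prod_\rho(\tfrac12|z_\rho|^2)^{a_\rho}$, then extends to all of $X_P$ by continuity, whereas you use the homogenized Cox monomials $\prod_\rho z_\rho^{h_\rho(m)}$ from the start and absorb the line-bundle ambiguity into your constant $C$.
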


\begin{proof}
This is essentially a computation using Proposition~\ref{quotmomentprop}.  If $z = (z_\rho)_{\rho\in\Sigma(1)} \in \mu^{-1}(0)$ maps to $x \in X_P$, then we know that $p = \mu_\text{quot}(x)$ satisfies 
\[
\tfrac12|z_\rho|^2 = \langle p,n_\rho\rangle + a_\rho = h_\rho(p).
\]
Suppose in addition that $z \in (\C^*)^{\Sigma(1)}$.  Since the torus of $X_P$ is $\text{Hom}_\Z(M,\C^*)$ and the map  
$(\C^*)^{\Sigma(1)} \to \text{Hom}_\Z(M,\C^*)$ is induced by the map $A$ in \eqref{ses}, it follows that
\[
\chi^m(x) = \prod_\rho z_\rho^{\langle m, n_\rho \rangle}
\]
for all $m \in M$.  

If we set $c(z) = \prod_{\rho}  (\frac12|z_\rho|^2)^{a_\rho}$, then we compute:
\begin{align*} 
|\chi^m(x)|^2 c(z) &=  \prod_\rho |z_\rho^{\langle m, n_\rho \rangle}|^2 \prod_{\rho} (\tfrac12 |z_\rho|^2)^{a_\rho}\\
&= \prod_\rho 2^{\langle m,n_\rho\rangle} \prod_{\rho}(\tfrac12 |z_\rho|^2)^{\langle m, n_\rho \rangle} \prod_{\rho} (\tfrac12 |z_\rho|^2)^{a_\rho}\\
&=  2^{\langle m,n_P\rangle} \prod_{\rho} (\tfrac{1}{2} |z_\rho|^2)^{h_\rho(m)}\\
&=  2^{\langle m,n_P\rangle} \prod_{\rho} h_\rho(p)^{h_\rho(m)}\\
&=  2^{\langle m,n_P\rangle} \beta_m(p), 
\end{align*}
where the last line uses the non-normalized blending function Definition~\ref{krasauskasblending}.   When we combine this with 
$\mathbf{w}_m = 2^{\langle m,n_P\rangle} w_m$ and 
\[
K_{\mathbf{w}} (p)  =  \frac{1}{\sum_{m\in \A} \mathbf{w}_m\hskip1pt\beta_m(p)} \sum_{m\in \A}  \mathbf{w}_m\hskip1pt \beta_m(p)\hskip1pt m,
\]
we obtain
\begin{align*}
K_{\mathbf{w}} (p) &= \frac{1}{\sum_{m\in \A} \mathbf{w}_m\hskip1pt2^{-\langle m,n_P\rangle} |\chi^{m}(x)|^2 \hskip1pt c(z)} 
\sum_{m\in \A}  \mathbf{w}_m\hskip1pt 2^{-\langle m,n_P\rangle} |\chi^{m}(x)|^2 \hskip1pt c(z) \hskip1pt m\\
 &= \frac{1}{\sum_{m\in \A} w_m\hskip1pt |\chi^{m}(x)|^2} 
\sum_{m\in \A}  w_m\hskip1pt |\chi^{m}(x)|^2 \hskip1pt m \\
& =  \mu_{\text{FS},w}(x).
\end{align*}
Since $p = \mu_\text{quot}(x)$, this gives $(K_{\mathbf{w}} \circ \mu_\text{quot})(x) = \mu_{\text{FS},w}(x)$ when $x$ is in the torus of $X_P$.  By continuity, the equality must hold on all of $X_P$.
\end{proof}

Krasauskas \cite{krasauskas} proved that the map $K_{\mathbf{w}}: P \to P$ is analytically invertible.  Hence Proposition~\ref{Km=tx-proposition} gives the analytic formula for the quotient moment map
\[
\mu_\text{quot} = K_{\mathbf{w}}^{-1} \circ \mu_{\text{FS},w}
\]
for any set of positive weights $w$.

Proposition~\ref{Km=tx-proposition} also leads to our first major result as follows.

\begin{theorem}
\label{momentprecision}
Assume that $P$ is smooth with lattice points $\A = P\cap M$.  Then for positive weights $w = (w_m)_{m\in \A}$, the following are equivalent:
\begin{enumerate}
\item The quotient moment map $\mu_\text{\rm quot}$ of $X_P$ is the weighted moment map $\mu_{\text{\rm FS},w}$, i.e., 
\[
\mu_\text{\rm quot}(x) = \frac{1}{\sum_{m\in \A} w_m\hskip1pt |\chi^m(x)|^2} \sum_{m\in \A}  w_m\hskip1pt |\chi^m(x)|^2\hskip1pt m
\]
for all $x \in X_P$.
\item $P$ has strict linear precision for the weights $\mathbf{w}_m = 2^{\langle m,n_P\rangle} w_m$,  $n_P = \sum_\rho n_\rho$, i.e., 
\[
p = \frac{1}{\sum_{m \in \A} \mathbf{w}_m \hskip1pt\beta_m(p)} 
\sum_{m\in \A} \mathbf{w}_m \hskip1pt \beta_m(p)\hskip1pt m
\]
for all $p \in P$.
\end{enumerate}
\end{theorem}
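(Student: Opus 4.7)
The plan is to derive Theorem~\ref{momentprecision} as an essentially immediate consequence of Proposition~\ref{Km=tx-proposition}, which with the specified weights $\mathbf{w}_m = 2^{\langle m, n_P\rangle} w_m$ gives the identity
\[
K_{\mathbf{w}} \circ \mu_\text{quot} = \mu_{\text{FS},w}
\]
holding on all of $X_P$. The two ingredients needed beyond this are: (i) the definition of strict linear precision (Definition~\ref{slpdef}), which says that $P$ has strict linear precision for weights $\mathbf{w}$ if and only if the Krasauskas map $K_{\mathbf{w}} : P \to P$ equals the identity map of $P$; and (ii) the fact, pointed out in the remark after Proposition~\ref{quotmomentprop} (Delzant's theorem), that the image of $\mu_\text{quot}$ is the entire polytope $P$.

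For the direction $(2) \Rightarrow (1)$, I would simply note that if $P$ has strict linear precision for the weights $\mathbf{w}$, then $K_{\mathbf{w}}(p) = p$ for every $p \in P$, so Proposition~\ref{Km=tx-proposition} collapses to
\[
\mu_{\text{FS},w} = K_{\mathbf{w}} \circ \mu_\text{quot} = \mu_\text{quot}.
\]
For the direction $(1) \Rightarrow (2)$, assume $\mu_\text{quot} = \mu_{\text{FS},w}$ on $X_P$. Then Proposition~\ref{Km=tx-proposition} gives $K_{\mathbf{w}}(\mu_\text{quot}(x)) = \mu_\text{quot}(x)$ for every $x \in X_P$, i.e., $K_{\mathbf{w}}$ fixes every point in the image of $\mu_\text{quot}$. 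By Delzant's theorem this image is exactly $P$, so $K_{\mathbf{w}} = \mathrm{id}_P$, which is precisely strict linear precision for $\mathbf{w}$.

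There is no real obstacle here: once one is willing to quote Proposition~\ref{Km=tx-proposition} (whose proof is the substantive computation) and Delzant's surjectivity, both implications are one-line rewrites. The only thing to be mildly careful about is matching the bookkeeping of the weights, remembering that $\mu_\text{quot}$ is compared with $\mu_{\text{FS},w}$ while strict linear precision is asserted for the rescaled weights $\mathbf{w}_m = 2^{\langle m, n_P\rangle} w_m$; this rescaling is built into Proposition~\ref{Km=tx-proposition} and requires no additional argument.
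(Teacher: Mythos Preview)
Your proof is correct and follows essentially the same approach as the paper: both invoke Proposition~\ref{Km=tx-proposition} to get $K_{\mathbf{w}}\circ\mu_\text{quot}=\mu_{\text{FS},w}$, then use surjectivity of $\mu_\text{quot}$ onto $P$ to conclude that $\mu_\text{quot}=\mu_{\text{FS},w}$ is equivalent to $K_{\mathbf{w}}=\mathrm{id}_P$, which is the definition of strict linear precision for $\mathbf{w}$. The paper's version is simply a more compressed one-line chain of equivalences, but the content is identical.
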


\begin{proof}
Proposition~\ref{Km=tx-proposition} implies $K_{\mathbf{w}} \circ \mu_\text{quot} = \mu_{\text{FS},w}$.  Since $\mu_\text{quot},\, \mu_{\text{FS},w} : X_P \to P$ are surjective, we have the equivalences 
\[
(1) \iff \mu_\text{quot} = \mu_{\text{FS},w} \iff 
K_{\mathbf{w}}\text{ is the identity} \iff (2). \qedhere
\]
\end{proof}

\subsection{Guillemin's  Formula Applied to the Motivating Question} 
\label{AppMotQ}
Using the Krasauskas function $\beta_w$ from Defintion~\ref{krasauskasblending}, Theorem \ref{guilleminthm} has the following nice corollary.

\begin{corollary}
\label{guillemincor}
If $\sum_\rho n_\rho =0$ and $\beta_w$ is constant, then $\mu_\text{quot} = \mu_{\text{FS}, w}$.
\end{corollary}

\begin{proof}
Our hypothesis gives $n_P = \sum_\rho n_\rho = 0$, so that $h(p) = \langle p,n_P\rangle = 0$ for all $p \in \mr$.  Then the Comparison Formula in 
Theorem~\ref{guilleminthm} reduces to
\begin{align*}
\varpi_{\text{FS}, w}\res{T} &= \varpi_\text{quot}\res{T} +i\partial\overline{\partial} \log\Big( \!\sum_{m \in \A} {w}_m  \prod_{\rho} (h_\rho\circ \mu_\text{quot}\res{T})^{h_\rho(m)}\Big)\\
&= \varpi_\text{quot}\res{T} +i\partial\overline{\partial}\log\big(\beta_w\circ\mu_\text{quot}\res{T}\big).
\end{align*}
Since $\beta_w$ is constant, the same is true for $\log\big(\beta_w\circ\mu_\text{quot}\res{T}\big)$, and $\varpi_\text{quot} = \varpi_{\text{FS},w}$ follows immediately.  Since 
 the moment map is determined up to translation by the symplectic form, $\mu_\text{quot}$ and $\mu_{\text{FS}, w}$ are the same up to translation. Equality follows since we have set things up so that both have the polytope $P$ as their image.
\end{proof}

\begin{remark} \
\begin{enumerate}
\item We will see later, that the converse of Corollary \ref{guillemincor} holds, i.e., the \emph{only} way that $\varpi_\text{quot}$ can equal $\varpi_{\text{FS},w}$ is when $\sum_\rho n_\rho =0$ and $\beta_w$ is constant. This is a consequence of part of Main Theorem (Theorem~\ref{XML}) addressing the equality of moment maps.
\item The polynomials $\beta_m$ and $\beta_w$ were defined by Krasauskas in 2002 in the geometric modeling paper  \emph{Toric surface patches} \cite{krasauskas}.  It is fascinating to see a version of these polynomials appearing eight years earlier in the symplectic geometry paper \emph{Kaehler structures on toric varieties} \cite{guillemin}.
\end{enumerate}
\end{remark}

It is interesting to compare Corollary~\ref{guillemincor} and Theorem~\ref{momentprecision} to our main result,  Theorem~\ref{XML} from the Introduction. Corollary~\ref{guillemincor} implies (2) $\Rightarrow$ (3) of Theorem~\ref{XML}, and Theorem~\ref{momentprecision} \emph{almost} implies  (3) $\Rightarrow$ (1).  The discrepancy is that Theorem~\ref{momentprecision} involves weights $w_m$ and $\mathbf{w}_m$.  But these become equal when $\sum_\rho n_\rho = 0$, which is part of (2) of Theorem~\ref{XML}.  

It seems clear that everything will eventually fit together, but we still have more work to do.  To complete the proof of Theorem~\ref{XML}, we need to study the concept of  \emph{maximum likelihood degree} from algebraic statistics.  


\section{Maximum Likelihood Degree}
\label{mlesection}

In this section, we recast our setup in the language of algebraic statistics and define maximum likelihood estimates and the maximum likelihood degree of a very affine variety.  In Section~\ref{mle1section}, we focus on the case when the ML degree is one, which by a theorem of Huh \cite{huh} implies the existence of a Horn parametrization.   Finally, in Section~\ref{hornsection}, we use these tools to prove our main result Theorem~\ref{XML}.

\subsection{A Change in Notation}\label{notationchange} 
In order to write matrices related to the Horn parametrizations, for the rest of the paper we  enumerate facets and lattice points and fix a basis of $M$, so that $M = \Z^d$.  Thus $P$ is now a $d$-dimensional lattice polytope in $\R^d$ with lattice points $\A = P\cap\Z^d$.

We enumerate the facet normals as $n_1,\dots,n_r$, so that the corresponding facets are $F_1,\dots,F_r$ and the lattice distance from $p \in \R^d$ to $F_i$ is
\[
h_i(p) = \langle p,n_i\rangle + a_i,\ \ i=1,\dots,r.
\]
The index $i$ will be reserved for facets and facet normals.  We also enumerate the lattice points of $P$ as $\A = 
\{m_1,\dots,m_s\}$, so that
\[
h_i(m_j) = \text{lattice distance from the $j$th lattice point to the $i$th facet.}
\]
The index $j$ will be reserved for lattice points. 

This allows us to rewrite Definition~\ref{krasauskasblending} as follows.  For $p \in P$, the function $\beta_{m_j}(p)$ becomes
\[
\beta_j(p) = \prod_{i=1}^r h_i(p)^{h_i(m_j)},\quad j=1,\dots,s.
\]
Also, for positive weights $w = (w_1,\dots,w_s)$, the function $\beta_w$ is
\[
\beta_w(p) = \sum_{j=1}^s w_j\hskip1pt \beta_j(p),
\]
and the toric blending functions are $w_1\hskip1pt\beta_1/\beta_w,\dots,w_s\hskip1pt \beta_s/\beta_w$.  

\subsection{A Change in Perspective} We no longer assume that $X_P$ is smooth, so the map $[w\chi]_\A : X_P \to \PP^{s-1}$ from \eqref{weightemb} need not be an embedding.  Hence we replace $X_P$ with its image $X_{\A,w} \subseteq \PP^{s-1}$.  Also, $M = \Z^d$ implies that the torus of $X_P$ is $(\C^*)^d$, so that restricting \eqref{weightemb} to the torus gives 
\[
[w\chi]_\A : (\C^*)^d \longrightarrow X_{\A,w} \subseteq \PP^{s-1}
\]
defined by $t \mapsto [w_1 t^{m_1} : \ldots : w_s t^{m_s}]$.  In this setup, $X_{\A,w}$ is the Zariski closure of the image.  Following \cite{hosten}, we call $X_{\A,w}$ a \emph{scaled projective toric variety}. 

In order to connect transparently with Theorem 1 of Huh \cite{huh} (Theorem~\ref{huhthm} below), we need to work on a certain very affine Zariski open subset of $X_{\A,w}$.   For coordinates $x_1,\dots,x_s$ on $\PP^{s-1}$, let $W = \mathbf{V}(x_1\cdots x_s(x_1+\cdots+x_s)) \subseteq \PP^{s-1}$ and consider the map
\begin{equation}
\label{projveryaffine}
X_{\A,w}\setminus W \longrightarrow (\C^*)^s,\quad 
[x_1: \ldots: x_s] \longmapsto \frac1{x_1+\cdots+x_s}(x_1,\dots,x_s).
\end{equation}
One can check that the image of this map, denoted $Y_{\A,w}$, is closed in $(\C^*)^s$.  We call $Y_{\A,w}$ a \emph{scaled very affine toric variety}.  

When we think of $X_{\A,w}$ and $Y_{\A,w}$ as parametrized varieties, it will be convenient to replace the torus $(\C^*)^d$ with affine space $\C^d$.  This gives parametrizations
\begin{equation}
\label{XYmonomial}
\begin{aligned}
{[w\chi]}_\A &: \C^d \dashrightarrow X_{\A,w} ,\hskip7.5pt t \longmapsto [w_1\hskip1pt t^{m_1}: \ldots : w_s\hskip1pt t^{m_s}] \in \PP^{s-1}\\
\overline{w \chi}_\A &: \C^d \dashrightarrow Y_{\A,w}, \quad t \longmapsto 
\Big(\frac{w_1\hskip1pt t^{m_1}}{\sum_{j=1}^s w_j\hskip1pt t^{m_j}}, \dots, \frac{w_s\hskip1pt t^{m_s}}{\sum_{j=1}^s w_j\hskip1pt t^{m_j}}\Big) \in (\C^*)^s
\end{aligned}
\end{equation}
such that $X_{\A,w}$ and $Y_{\A,w}$ are the Zariski closures of the images in $\PP^{s-1}$ and $(\C^*)^s$ respectively.

\begin{remark}
\label{wtremark}
The group $\R_{>0} \times (\R_{>0})^d$ acts on weights $w = (w_1,\dots,w_s)$ via
\[
(\lambda,t)\cdot w = (\lambda\hskip1pt t^{m_1} w_1\dots, \lambda\hskip1pt t^{m_s} w_s),
\]
and one can check that $w' = (\lambda,t)\cdot w$ satisfies
$X_{\A,w'}=X_{\A,w}$ and $Y_{\A,w'} = Y_{\A,w}$.
\end{remark}

We next give different parametrizations of $X_{\A,w}$ and $Y_{\A,w}$.  First note that $\beta_1,\dots,\beta_s$ and $\beta_w = \sum_{j=1}^s w_j\hskip1pt \beta_j$ are polynomials and hence are defined  on $\C^d$.  

\begin{proposition}
\label{XYparam}
We have the toric blending parametrizations
\begin{align*}
[w \beta]_\A &: \C^d \dashrightarrow X_{\A,w} ,\hskip7.5pt t \longmapsto [w_1\hskip1pt\beta_1(t):\ldots:w_1\hskip1pt\beta_s(t)]\in \PP^{s-1}\\
\overline{w \beta} _\A&: \C^d \dashrightarrow Y_{\A,w} ,\quad t \longmapsto 
\Big(\frac{w_1\hskip1pt\beta_1(t)}{\beta_w(t)},\dots, \frac{w_s\hskip1pt \beta_s(t)}{\beta_w(t)}\Big) \in (\C^*)^s
\end{align*}
such that $X_{\A,w}$ and $Y_{\A,w}$ are the Zariski closures of the images in $\PP^{s-1}$ and $(\C^*)^s$ respectively.
\end{proposition}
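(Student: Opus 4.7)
The plan is to exhibit a rational map $\phi : \C^d \dashrightarrow T=(\C^*)^d$ through which both the monomial and blending parametrizations factor, so that
\[
[w\beta]_\A = [w\chi]_\A \circ \phi \qquad\text{and}\qquad \overline{w\beta}_\A = \overline{w\chi}_\A \circ \phi.
\]
Because $X_{\A,w}$ and $Y_{\A,w}$ are by construction the Zariski closures of the images of $[w\chi]_\A$ and $\overline{w\chi}_\A$, it will suffice to show that $\phi$ is dominant: the images of the blending parametrizations will then be Zariski dense in the same two varieties.

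To build $\phi$, I would first use the identity $h_i(m_j)=a_i+\langle m_j,n_i\rangle$, which is immediate from the definition of $h_i$, to split
\[
\beta_j(t) = \prod_{i=1}^r h_i(t)^{h_i(m_j)} = \Big(\prod_{i=1}^r h_i(t)^{a_i}\Big)\,\prod_{i=1}^r h_i(t)^{\langle m_j,n_i\rangle}.
\]
On the open set where every $h_i(t)$ is nonzero the bracketed factor is independent of $j$, so it cancels both in the projective tuple $[w_j\beta_j(t)]_j$ and in each ratio $w_j\beta_j(t)/\beta_w(t)$. Defining $\phi(t)\in T$ by $\phi(t)^m=\prod_i h_i(t)^{\langle m,n_i\rangle}$ for $m\in M$---equivalently, $\phi$ is the composition of $t\mapsto(h_i(t))_i$ with the morphism $(\C^*)^r\to T$ dual to the map $A$ in \eqref{ses}---what remains on the right of the above identity is $w_j\,\phi(t)^{m_j}$, giving both factorizations at once.

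It remains to verify that $\phi$ is dominant. Since $h_i(t)=\langle t,n_i\rangle+a_i$, the partial derivative of $\log\phi(t)^{e_k}=\sum_i\langle e_k,n_i\rangle\log h_i(t)$ with respect to $t_\ell$ is $\sum_i\langle e_k,n_i\rangle\langle e_\ell,n_i\rangle/h_i(t)$; hence the Jacobian of $\log\phi$ is the symmetric $d\times d$ matrix
\[
J(t) \;=\; \sum_{i=1}^r \frac{1}{h_i(t)}\, n_i n_i^{T},
\]
with each $n_i$ viewed as a column vector in $\R^d$ via the dual basis of $N$. For $t$ in the interior of $P$, every $h_i(t)$ is positive and the $n_i$ span $N_\R$ because $P$ is full-dimensional, so $J(t)$ is positive definite and in particular invertible. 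As the non-vanishing of $\det J$ is a Zariski-open condition on $\C^d$, the map $\phi$ has full rank on a dense open subset and is therefore dominant. Passing to Zariski closures yields the statement for $X_{\A,w}$, and the identical argument with $\overline{w\chi}_\A$ in place of $[w\chi]_\A$ gives the statement for $Y_{\A,w}$.

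The main obstacle is the dominance of $\phi$; once granted, everything else is a routine bookkeeping of exponents together with the definitions of $X_{\A,w}$ and $Y_{\A,w}$. The positive-definiteness of $J(t)$ on $P^\circ$ reduces dominance to the fact that the facet normals of a full-dimensional polytope span $N_\R$, so this step should cause no real difficulty.
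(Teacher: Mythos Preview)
Your proof is correct and the factorization $\beta_j(t) = \big(\prod_i h_i(t)^{a_i}\big)\cdot\phi(t)^{m_j}$ is exactly the identity the paper uses (they write it as $\beta_j(t)=c\,t_0^{m_j}$, citing \cite[Ex.\ 3.5]{SG}).  Where you diverge is in the density step.  The paper does not argue via dominance of $\phi$; instead it invokes \cite[Cor.\ 22]{krasauskas} to say that the toric blending parametrization, restricted to $P$, surjects onto $X_{\A,w}\cap\PP^{s-1}_{\ge0}$, and then observes that this real locus is Zariski dense in $X_{\A,w}$ because the weights are positive.

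Your route is more self-contained: the Jacobian computation
\[
J(t)=\sum_{i=1}^r \frac{1}{h_i(t)}\,n_i n_i^{T}
\]
is positive definite on $P^\circ$ precisely because the facet normals span $N_\R$, and this gives dominance of $\phi$ without any appeal to Krasauskas.  The paper's route, on the other hand, gives a little more for free---it identifies the image of $P$ as exactly the nonnegative part $X_{\A,w}\cap\PP^{s-1}_{\ge0}$---a fact the paper later reuses in the proof of Proposition~\ref{SGprop41}.  So both arguments are short; yours avoids an external citation, while the paper's extracts an explicit description of the real image that is needed elsewhere.
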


\begin{proof}
We use the notation of Section~\ref{notationchange}.  As in
\cite[Ex.\ 3.5]{SG}, $(z_1,\dots,z_r) \in (\C^*)^r$ gives a point $t_0 \in (\C^*)^d$ such that for $j =1,\dots,s$,
\[
\prod_{i=1}^r z_i^{h_i(m_j)} = c\hskip1pt t_0^{m_j}, \quad c = \prod_{i=1}^r z_i^{a_i}.
\]
If $t \in \C^d$ and $h_1(t)\cdots h_r(t) \ne 0$, then applying the above formula to $(z_1,\dots,z_r) = (h_1(t),\dots,h_r(t) )$ gives $t_0 \in (\C^*)^d$ such that for  $j =1,\dots,s$,
\[
\beta_j(t) = \prod_{i=1}^r h_i(t)^{h_i(m_j)} = c\hskip1pt t_0^{m_j}, 
\]
where $c \in \C^*$ is independent of $j$.  It follows that in projective space,
\[
[w_1\hskip1pt\beta_1(t):\ldots:w_1\hskip1pt\beta_s(t)] = [w_1\hskip1pt t_0^{m_1}:\ldots:w_s\hskip1pt t_0^{m_s}] \in X_{\A,w}.  
\]
Furthermore, \cite[Cor.\ 22]{krasauskas} implies that if we restrict $t \mapsto [w_1\hskip1pt\beta_1(t):\ldots:w_1\hskip1pt\beta_s(t)]$ to $P \subseteq 
\C^d$, then the image is $X_{\A,w}\cap\PP^{s-1}_{\ge0}$, which is Zarski dense in $X_{\A,w}$ since the weights $w$ are positive.  Thus the image of $[w\beta]_\A$ is Zariski dense in $X_{\A,w}$.  Composing with \eqref{projveryaffine} gives $\overline{w\beta}_\A : \C^d \dashrightarrow Y_{\A,w}$.  Since \eqref{projveryaffine} gives a birational map $X_{\A,w} \dashrightarrow Y_{\A,w}$, we see that  
image of $\overline{w\beta}_\A$ is Zariski dense in $Y_{\A,w}$. 
\end{proof}

\subsection{Maximum Likelihood Estimates} Let us review the statistical aspects of our situation.  A general overview can be found in  the survey \cite{HS} by Huh and Sturmfels. Consider the open simplex of dimension $s-1$ defined by
\[
\oDelta{}^{s-1} = \{(u_1,\dots,u_s) \in \R_{>0}^s \mid {\textstyle\sum_{j=1}^s} u_j = 1\}.  
\]
Note that projection onto the first $s-1$ coordinates gives a bijection from $\oDelta{}^{s-1}$ to the interior of the standard simplex $\Delta_{s-1} \subseteq \R^{s-1}$.  Also, if $\PP^{s-1}_{>0} \subseteq \PP^{s-1}$ denotes the subset of points in $\PP^{s-1}$ that have positive real homogeneous coordinates, then \eqref{projveryaffine} induces a bijection
\[
X_{\A,w} \cap \PP^{s-1}_{>0} \simeq Y_{\A,w}\cap \R_{>0}^s \subseteq \oDelta{}^{s-1}.
\]
This is  a \emph{log-linear model}, though these days the term \emph{toric model} is also used.

For the toric model $Y_{\A,w}\cap \R_{>0}^s \subseteq \oDelta{}^{s-1}$, suppose that we have normalized data $u = (u_1,\dots ,u_s) \in\oDelta{}^{s-1}$.  The \emph{maximum likelihood estimate} $\hat{x} = L(u)$ of $u$ (ML estimate for short) is the point $\hat{x}$ of the model that ``best explains'' the data $u$.   More precisely, $\hat{x}$ maximizes the function
\begin{equation}
\label{mletomax}
\ell_u(x) = x^u = x_1^{u_1} \cdots x_s^{u_s},\quad x = (x_1,\dots,x_s) \in Y_{\A,w}\cap \R_{>0}^s.
\end{equation}

There is a classical description of the maximum likelihood estimate that works well for our purposes.  Before stating the result, observe that if $(u_1,\dots ,u_s) \in \oDelta{}^{s-1}$, then $\sum_{j=1}^s u_j m_j$ is a positive convex combination of the lattice points of $P$ and hence lies in the interior $P^\circ$ of $P$.
This leads to the map
\begin{equation}
\label{tauAoDelta}
\tau_\A : \oDelta{}^{s-1} \longrightarrow P^\circ,\quad \tau_\A(u_1,\dots,u_s) = \sum_{j=1}^s u_j m_j,
\end{equation}
which (as we will soon see) is a variant of the map $\tau_\A$ defined in \eqref{tautologicalR}.  

\begin{proposition}
\label{mleprop}
For $u \in \oDelta{}^{s-1}$, $L(u)$ is the unique point $\hat{x} \in Y_{\A,w}\cap \R_{>0}^s$ satisfying $\tau_\A(\hat{x}) = \tau_\A(u)$.

\end{proposition}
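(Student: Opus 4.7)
The plan is to convert the constrained maximization defining $L(u)$ into an unconstrained concave optimization in the toric parameter space, identify the first-order condition with the equation $\tau_\A(\hat x)=\tau_\A(u)$, and combine strict concavity (for uniqueness) with a boundary/compactness argument (for existence).

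First I pull back via the parametrization $\overline{w\chi}_\A$ from \eqref{XYmonomial}, writing $x_j(t)=w_j t^{m_j}/B(t)$ with $B(t)=\sum_k w_k t^{m_k}$, and pass to logarithmic coordinates $y_i=\log t_i\in\R^d$. Because $\sum_j u_j=1$, the log-likelihood collapses to
\[
F(y):=\sum_{j=1}^s u_j\log x_j(y)=\sum_{j=1}^s u_j\log w_j+\langle\tau_\A(u),y\rangle-\log B(y),
\]
where $\tau_\A(u)=\sum_j u_j m_j$ as in \eqref{tauAoDelta}. A direct computation gives $\nabla_y\log B(y)=\sum_k x_k(y)\,m_k=\tau_\A(x(y))$, so the stationary condition $\nabla F=0$ is precisely $\tau_\A(x(y))=\tau_\A(u)$. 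This produces the desired equation at any critical point, and in particular at any ML estimate.

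Next I show $F$ is strictly concave. The Hessian of $\log B(y)$ equals the covariance matrix of the probability vector $(x_j(y))_j$ viewed as a distribution on $\A$; since $P$ is full-dimensional, the $m_j$ affinely span $\R^d$, so this covariance matrix is positive definite and $F$ is strictly concave in $y$. The same affine-spanning property makes the parametrization $y\mapsto x(y)$ injective (two parameters giving the same $x$ differ by a vector perpendicular to every $m_j-m_1$), so uniqueness of a critical point in $y$-space descends to uniqueness of $\hat x\in Y_{\A,w}\cap\R_{>0}^s$ with $\tau_\A(\hat x)=\tau_\A(u)$.

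For existence, I exhibit a maximizer by a compactness argument: $\ell_u$ extends continuously to the compact closure $\overline{Y_{\A,w}\cap\R_{>0}^s}$ inside the closed simplex $\overline{\oDelta{}^{s-1}}$, and since every $u_j>0$, it vanishes on the boundary locus where some $x_j=0$; hence its maximum is attained in the interior, and the preceding first-order analysis forces this maximizer $\hat x$ to satisfy $\tau_\A(\hat x)=\tau_\A(u)$. I expect the main obstacle to be rigorously controlling this boundary behavior, since $Y_{\A,w}\cap\R_{>0}^s$ is not compact in its own right and its closure in $\PP^{s-1}$ can meet coordinate hyperplanes; the positivity of each $u_j$ is the essential ingredient that keeps the maximizer away from those faces.
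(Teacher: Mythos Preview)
Your argument is correct and is essentially the classical proof of this result in the theory of exponential families (often attributed to Birch, and in this toric setting to Darroch and Ratcliff). The paper, however, does not supply an argument at all: its proof reads in full ``As noted in \cite[Sec.~4]{SG}, this is proved in \cite[Lem.~4]{DR}.'' So there is no approach to compare; you have reconstructed the content of the cited reference.

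A few small comments on the write-up. Your concavity step is fine, but it is worth making explicit that the Hessian of $\log B$ at $y$ is $\sum_k x_k(y)\,m_k m_k^{\mathsf T}-\big(\sum_k x_k(y)\,m_k\big)\big(\sum_k x_k(y)\,m_k\big)^{\mathsf T}$, the covariance of the $\A$-valued random variable with law $(x_k(y))_k$; positive definiteness then follows because full-dimensionality of $P$ means the $m_j$ do not lie in a proper affine subspace. For the existence/boundary step, the point you flag is handled by observing that $Y_{\A,w}$ is closed in $(\C^*)^s$, so $Y_{\A,w}\cap\R_{>0}^s$ is closed in the open simplex; hence any limit point of this set lying in $\oDelta{}^{s-1}$ already belongs to it, and the remaining limit points in $\overline{\oDelta{}^{s-1}}$ have some coordinate equal to zero, where $\ell_u$ vanishes because every $u_j>0$. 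With that clarification the compactness argument goes through cleanly.
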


\begin{proof} As noted in \cite[Sec.\ 4]{SG}, this is proved in \cite[Lem.\ 4]{DR}.
\end{proof}

When the data $u = (u_1,\dots,u_s)$ is positive but not normalized, we normalize it by dividing by $u_+ = u_1+\cdots+ u_s$.  Then $L(u)$ is the unique point $\hat{x} \in Y_{\A,w}\cap \R_{>0}^s$ satisfying $\tau_\A(\hat{x}) = \tau_\A(u/u_+)$.



\subsection{The Maps \boldmath{$\tau_\A$}} Table ~\ref{tauAtable} shows several maps named 
$\tau_\A$.  We have adjusted Definition 
\ref{tau-definition} and equation \eqref{tautologicalR} to the notation of Section~\ref{notationchange}.  

\begin{table}[H]
\begin{tabular}{l|l|l}
1.\ Def.~\ref{tau-definition} & $\tau_\A : \PP^{s-1} \dashrightarrow \PP^d$ & $[x_{1},\dots,x_{s}] \mapsto [\sum_{j=1}^s x_j : \sum_{j=1}^s x_j\hskip1pt m_j]$ \\[2.5pt] \hline
2.\ \eqref{tautologicalR} \rule{0pt}{10.5pt}  & $\tau_\A : \PP^{s-1}_{\ge0} \to P$ & $[x_{1},\dots,x_{s}] \mapsto \sum_{j=1}^s x_i\hskip1pt m_j/(\sum_{j=1}^s x_j) $\\[2.5pt] \hline
3.\ \eqref{tauAoDelta} \rule{0pt}{10.5pt} &
$\tau_\A : \oDelta{}^{s-1} \rightarrow P^\circ$ & $(u_1,\dots,u_s) \mapsto \sum_{j=1}^s u_j\hskip1pt m_j$.  
\end{tabular}
\caption{The Maps $\tau_\A$}
\label{tauAtable}
\end{table}

\vspace*{-5pt}

The relation between maps 1 and 2 in the table is explained in Section \ref{structuresubsection}.  For maps 2 and 3, note that $[x_1: \ldots: x_s] \mapsto \frac1{x_1+\cdots+x_s}(x_1,\dots,x_s)$ used in \eqref{projveryaffine} maps $\PP^{s-1}_{>0}$ bijectively to $\oDelta{}^{s-1}$.  When restricted to positive points, map 2 is this bijection followed by map 3.

It turns out that we need one more variant of $\tau_\A$, the rational map 
\begin{equation}
\label{piAcomplex}
\tau_\A : (\C^*)^s \dashrightarrow \C^d,\quad \tau_\A(x_1,\dots,x_s) = \sum_{j=1}^s \frac{x_j\,}{x_+}\hskip1pt m_j,
\end{equation}
where $x_+ = x_1+\cdots+ x_s$.

\subsection{Maximum Likelihood Degree} By definition, the \emph{maximum likelihood degree} (ML degree for short) of $X_{\A,w}$ is the number of complex critical points of  
\[
\ell_u([x_1:\ldots:x_s]) = \frac{x_1^{u_1}\cdots x_s^{u_s}}{(x_1+ \dots + x_s)^{u_1+\cdots + u_s}}
\]
regarded as a function on $X_{\A,w} \setminus W$, where $W = \mathbf{V}(x_1\cdots x_s(x_1+ \dots + x_s))$ and $u \in \Z_{>0}^s$ is generic.    We refer the reader to \cite{hosten} and \cite{huh} for the details.  

Under the isomorphism $X_{\A,w} \setminus W \simeq Y_{\A,w}$ from \eqref{projveryaffine}, the above formula reduces to $\ell_u(x) = x^u$ from \eqref{mletomax}.  This tells us that the ML estimate $L(u) \in  Y_{\A,w} \cap \R_{>0}^s$ is one of the critical points counted by the ML degree.

We will need the following geometric interpretation of the ML degree that is implicit in \cite[Sec.\ 4]{SG}.  

\begin{proposition}
\label{mldprop/def}
The ML degree of $Y_{\A,w} \subseteq (\C^*)^s$ is the degree of the restriction
\[
\tau_{\A}\raisebox{-2.5pt}{$|$}{\raisebox{-3pt}{\scriptsize$Y_{\A,w}$}} :  Y_{A,w} \dashrightarrow \C^d
\]
for $\tau_\A$ from \eqref{piAcomplex}.  By ``degree'', we mean the number of points in a generic fiber.
\end{proposition}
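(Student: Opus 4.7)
The plan is to identify the critical-point equations for the log-likelihood with the fiber condition of $\tau_\A|_{Y_{\A,w}}$, and then match the counts. First I would transfer the problem to $Y_{\A,w}$: the isomorphism $X_{\A,w}\setminus W \simeq Y_{\A,w}$ of \eqref{projveryaffine} converts the projectively-invariant function $\ell_u$ into the function $y\mapsto y_1^{u_1}\cdots y_s^{u_s}$ on $Y_{\A,w}$, since the identity $y_1+\cdots+y_s=1$ holds on the image of that isomorphism and hence on its Zariski closure $Y_{\A,w}$. Thus for generic $u\in\Z_{>0}^s$, the ML degree equals the number of complex critical points on $Y_{\A,w}$ of $f_u(y)=\sum_j u_j\log y_j$.

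Next I would pull $f_u$ back through the monomial parametrization $\overline{w \chi}_\A:(\C^*)^d\dashrightarrow Y_{\A,w}$ of \eqref{XYmonomial} and differentiate. Writing $y_j(t)=w_j t^{m_j}/\beta_w(t)$ and computing the logarithmic derivative of $\beta_w$, the equations $t_i\,\partial_{t_i}(f_u\circ\overline{w \chi}_\A)=0$ for $i=1,\dots,d$ simplify to the single vector equation
\[
\sum_{j=1}^s y_j(t)\,m_j \;=\; \frac{1}{u_+}\sum_{j=1}^s u_j\, m_j,
\]
which, since $\sum_j y_j(t)=1$ forces the $x_+$ denominator in \eqref{piAcomplex} to equal $1$, is exactly the fiber condition $\tau_\A(y(t))=\tau_\A(u/u_+)$. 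Hence the critical points of $f_u\circ\overline{w \chi}_\A$ on $(\C^*)^d$ are precisely the preimages under $\overline{w \chi}_\A$ of $(\tau_\A|_{Y_{\A,w}})^{-1}(\tau_\A(u/u_+))$.

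To finish, I would cancel a common factor on both sides. Each critical point $\hat y\in Y_{\A,w}$ of $f_u$ contributes $\deg(\overline{w \chi}_\A)$ critical points to the pulled-back function on $(\C^*)^d$, and likewise each point of a generic fiber of $\tau_\A|_{Y_{\A,w}}$ contributes $\deg(\overline{w \chi}_\A)$ preimages under $\overline{w \chi}_\A$. For generic $u\in\Z_{>0}^s$, the point $\tau_\A(u/u_+)\in P^\circ\subseteq\C^d$ is a generic value of $\tau_\A|_{Y_{\A,w}}$, so dividing by $\deg(\overline{w \chi}_\A)$ on both sides gives the desired equality between the ML degree and $\deg(\tau_\A|_{Y_{\A,w}})$.

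The main point needing care is genericity: I must check that for a generic $u$ all critical points are simple, all lie in the smooth locus of $Y_{\A,w}$ where the parametrization is étale, and none escape into the indeterminacy loci of $\tau_\A$, $\overline{w \chi}_\A$, or the vanishing locus of $\beta_w$. Given this standard but nontrivial verification, the two multiplicity-free counts agree and the proposition follows from the critical-equation computation above.
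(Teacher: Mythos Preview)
Your argument is essentially correct and arrives at the same conclusion, but by a somewhat different route than the paper. The paper does not pull back to the parameter space $(\C^*)^d$ and then divide out a common factor $\deg(\overline{w\chi}_\A)$. Instead it stays on $X_{\A,w}$ and quotes \cite[Prop.~2.4]{hosten}, which asserts directly that the ML degree is the number of points $x\in X_{\A,w}\setminus W$ satisfying a system of equations of the form
\[
x_+ = x_+\lambda f,\qquad (Cx)_\ell = x_+\lambda\, t_\ell\,\frac{\partial f}{\partial t_\ell},\quad \ell=1,\dots,d,
\]
with $f=\sum_j w_j t^{m_j}$. The paper then shows, by the same logarithmic-derivative computation you carry out, that these equations are equivalent to $\tau_\A(x)=\tau_\A(u)$. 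This sidesteps entirely the need to track the degree of the monomial parametrization and the attendant genericity checks you flag at the end.

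What your approach buys is self-containment: you do not need to import the statement from \cite{hosten}. What the paper's approach buys is brevity, since the delicate issues you mention (simple critical points, smooth locus, \'etaleness of $\overline{w\chi}_\A$, avoidance of indeterminacy loci) are absorbed into the cited result. Your cancellation argument is valid because the monomial map $(\C^*)^d\to Y_{\A,w}$ is a finite \'etale cover of its image (the differences $m_j-m_1$ span a full-rank sublattice since $\A$ affinely spans $\R^d$), so the correspondence between critical points upstairs and downstairs is exact; but this should be said rather than left implicit.

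One notational slip: in the paper $\beta_w(p)=\sum_j w_j\beta_j(p)$ denotes the sum of weighted \emph{toric blending functions}, not the monomial sum. The denominator of the monomial parametrization is $f(t)=\sum_j w_j t^{m_j}$, and that is what you are differentiating.
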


\begin{proof}
We will follow the approach used in \cite{hosten}.  We start in the projective setting, where the inclusion $\C^d \subseteq \PP^d$ is written
\[
v = \sum_{\ell=1}^d v_i e_i = (v_1,\dots,v_d) \in \C^d \longmapsto [1:v] = [1:v_1:\ldots:v_d] \in \PP^d.
\]
Furthermore, the projective version of \eqref{piAcomplex} is $\tau_\A : \PP^{s-1} \dashrightarrow \PP^d$ defined by
\[
\tau_\A([x_1:\ldots:x_s]) = \Big[1:\sum_{j=1}^s \frac{x_j\,}{x_+}\hskip1pt m_j\Big] = \Big[x_+:\sum_{j=1}^s x_j\hskip1pt m_j\Big] = \sum_{j=1}^s x_j [1:m_j],
\]
which is map 1 from Table~\ref{tauAtable}.
Now set $f = w_1\hskip1pt t^{m_1} + \cdots + w_s\hskip1pt t^{m_s}$ and observe that the equations (2) from \cite[2.2]{hosten} can be written
\begin{equation}
\label{mleeqn}
x_+ = x_+ \lambda f,\ 
(C x)_1 = x_+ \lambda \hskip1pt t_1 \frac{\partial f}{\partial t_1},\ \dots\ ,\ 
(C x)_d = x_+ \lambda \hskip1pt t_d \frac{\partial f}{\partial t_d},
\end{equation}
where $C$ is the $d\times s$ matrix with columns $m_1,\dots,m_s$.  When $x_+ \ne 0$, 
\eqref{mleeqn} holds if and only if $(x_+,(C x)_1,\dots,(C x)_d)$ and $(f,\frac{\partial f}{\partial t_1},\dots,\frac{\partial f}{\partial t_d})$ give the same point in $\PP^d$.   However,
\[
[x_+:(C x)_1:\ldots:(C x)_d] = [x_+:C x] = [x_+:x_1\hskip1pt m_1 + \cdots + x_s\hskip1pt m_s] = \tau_\A(x),
\]
and following \cite[3.14]{SG},
\[
t_\ell \frac{\partial f}{\partial t_\ell} = t_\ell\frac{\partial}{\partial t_\ell} \Big(\sum_{j=1}^s w_j \hskip1pt t^{m_j} \Big) = \sum_{j=1}^s w_j \hskip.5pt \langle m_j,e_\ell\rangle\hskip1pt  t^{m_j},
\]
so that
\[
\Big(t_1 \frac{\partial f}{\partial t_1},\dots,t_d \frac{\partial f}{\partial t_d} \Big) = \sum_{\ell=1}^d t_\ell\frac{\partial f}{\partial t_\ell} e_\ell =\sum_{\ell=1}^d  \Big( \sum_{j=1}^s w_j \hskip1pt \langle m_j,e_\ell\rangle \hskip1pt t^{m_j} \Big) e_\ell = \sum_{j=1}^s w_j\hskip1pt t^{m_j} \hskip1pt m_j.
\]
It follows that
\[
\Big[f:t_1 \frac{\partial f}{\partial t_1}:\ldots:t_d \frac{\partial f}{\partial t_d} \Big] = \sum_{j=1}^s w_j\hskip1pt t^{m_j}[1:m_j] = \tau_\A([w_1\hskip1pt t^{m_1}:\ldots:w_s\hskip1pt t^{m_s}]).
\]
So \eqref{mleeqn} is equivalent to $\tau_\A(u) = \tau_\A(x)$ for $x = [w_1\hskip1pt t^{m_1}:\ldots:w_s\hskip1pt t^{m_s}] \in X_{\A,w}$.  By \cite[Prop.\ 2.4]{hosten}, the ML degree of $X_{\A,w}$ is the number of points  $x \in X_{\A,w} \setminus W$ satisfying \eqref{mleeqn} for generic $u$.  Then the proposition follows by the equivalence just proved and the definition of $Y_{\A,w}$.  
\end{proof}

\section{Maximum Likelihood Degree One}
\label{mle1section}

For a generic choice of weights $w$, the ML degree is  
\[
\text{degree of }  X_{\A,w} = \,\text{normalized volume of } P
\]
by \cite[Cor.\ 2.5]{hosten} or \cite[Thm.\ 3.2]{HS}.  For special choices of $w$, however, the ML degree can be strictly smaller.  This phenomenon is studied in \cite{hosten}.  In some very special cases, there are weights such that the ML degree drops all the way to one. In this situation, we have the following immediate corollary of Proposition~\ref{mldprop/def}.

\begin{corollary} 
\label{mlonecor}
 $Y_{\A.w}$ has ML degree one $\iff$ $\tau_{\A}\raisebox{-2.5pt}{$|$}{\raisebox{-3pt}{\scriptsize$Y_{\A,w}$}} :  Y_{\A,w} \dashrightarrow \C^d$ has degree one $\iff$ $\tau_{\A}\raisebox{-2.5pt}{$|$}{\raisebox{-3pt}{\scriptsize$Y_{\A,w}$}} :  Y_{\A,w} \dashrightarrow \C^d$ is birational.
\end{corollary}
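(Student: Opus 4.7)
The plan is to simply chain Proposition~\ref{mldprop/def} together with a standard fact about dominant rational maps of irreducible varieties. First, the equivalence between $Y_{\A,w}$ having ML degree one and $\tau_\A|_{Y_{\A,w}}$ having degree one is immediate from Proposition~\ref{mldprop/def}, since that proposition literally identifies the ML degree with the number of points in a generic fiber of $\tau_\A|_{Y_{\A,w}}$.

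The only content to verify is the equivalence between ``degree one'' and ``birational.'' For this, I would first record that $Y_{\A,w}$ is irreducible of dimension $d$: by Proposition~\ref{XYparam} it is the Zariski closure of the image of the parametrization $\overline{w\chi}_\A : \C^d \dashrightarrow Y_{\A,w}$, and since $P$ is full-dimensional the image of this parametrization has dimension $d$. Thus both $Y_{\A,w}$ and the target $\C^d$ are irreducible varieties of the same dimension $d$.

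Next I would handle the direction ``degree one $\Rightarrow$ birational.'' If the generic fiber of $\tau_\A|_{Y_{\A,w}}$ consists of a single point, then in particular generic fibers are nonempty, so the restricted map is dominant. For a dominant rational map between irreducible varieties of the same dimension, the degree equals the degree of the induced field extension of function fields, and degree one of that extension is precisely the statement that the map is birational. The reverse direction, ``birational $\Rightarrow$ degree one,'' is immediate because a birational map has generic fibers consisting of a single point by definition. No step is an obstacle here: the corollary really is just a repackaging of Proposition~\ref{mldprop/def} together with the dictionary between degree of a dominant rational map and degree of the function field extension.
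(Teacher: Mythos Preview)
Your proposal is correct and follows the same approach as the paper, which simply declares the result an immediate corollary of Proposition~\ref{mldprop/def} without further argument. Your write-up supplies the standard justification for the ``degree one $\iff$ birational'' step that the paper leaves implicit, so there is nothing to correct.
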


Here is one case where we have an explicit formula for the ML estimate.

\begin{proposition}
\label{mleoneprop}
Assume that $Y_{\A,w}$ has a rational parametrization
\[
 \Psi :\C^d \dashrightarrow Y_{\A,w},\quad  \Psi(t) = \big(  \psi_1(t),\dots,  \psi_s(t)\big)
\]
such that the rational map $\C^d \dashrightarrow \C^d$ defined by
\begin{equation}
\label{composition}
t \longmapsto \sum_{j=1}^s   \psi_j(t)\hskip1pt m_j = (\tau_\A\circ \Psi)(t)
\end{equation}
is birational.  Then $Y_{\A,w}$ has ML degree one, and if $\varphi: \C^d \dashrightarrow \C^d$ is the inverse of \eqref{composition}, 
then the ML estimate of $u= (u_1,\dots,u_s) \in \R_{>0}^s$ is 
\begin{equation}
\label{mlestimateformula}
\begin{aligned}
&L(u) =  \Psi(\varphi(p)) = \big(  \psi_1(\varphi(p)),\dots,  \psi_s(\varphi(p))\big) \in Y_{\A,w}, \ \text{where}\\
&p = \tau_\A(u) = \sum_{j=1}^s \frac{u_j\,}{u_+}\hskip1pt m_j \in \R^d \text{ and } u_+ = \sum_{j=1}^s u_j.
\end{aligned}
\end{equation}
\end{proposition}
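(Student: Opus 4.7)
The plan has two independent parts: first establish that the ML degree is one, then identify the formula with the ML estimate via uniqueness.

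For the ML degree claim, I would exploit the multiplicativity of degrees for dominant rational maps. The variety $Y_{\A,w}$ is the closure of the image of the $d$-dimensional torus under $\overline{w\chi}_\A$, hence irreducible of dimension $d$, and the given parametrization $\Psi:\C^d \dashrightarrow Y_{\A,w}$ is then dominant and generically finite of some degree $k$. The restriction $\tau_\A\raisebox{-2.5pt}{$|$}{\raisebox{-3pt}{\scriptsize$Y_{\A,w}$}}:Y_{\A,w} \dashrightarrow \C^d$ is dominant of some degree $\ell$ (it is at least dominant since $\tau_\A\circ\Psi$ is), and multiplicativity of degrees gives $k\cdot\ell = \deg(\tau_\A\circ\Psi) = 1$. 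Hence both $k=1$ and $\ell=1$; in particular $\tau_\A\raisebox{-2.5pt}{$|$}{\raisebox{-3pt}{\scriptsize$Y_{\A,w}$}}$ is birational, so by Corollary~\ref{mlonecor} the ML degree of $Y_{\A,w}$ is one.

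For the formula, let $\hat{x} = \Psi(\varphi(p))$ and check two things: that it lies in $Y_{\A,w}$ (immediate from the image of $\Psi$) and that $\tau_\A(\hat{x}) = p$. The second follows from the definition of $\varphi$: $\tau_\A(\hat{x}) = (\tau_\A\circ\Psi)(\varphi(p)) = p$. Since $p = \tau_\A(u) = \tau_\A(u/u_+)$, where the second equality is the identification of map 2 with map 3 in Table~\ref{tauAtable}, Proposition~\ref{mleprop} tells us that $L(u)$ is the unique element of $Y_{\A,w}\cap\R^s_{>0}$ with $\tau_\A(L(u))=p$. Combined with the birationality of $\tau_\A\raisebox{-2.5pt}{$|$}{\raisebox{-3pt}{\scriptsize$Y_{\A,w}$}}$ from Step 1, a generic fiber of $\tau_\A$ over $Y_{\A,w}$ is a single point, so for generic $u$ we must have $\hat{x} = L(u)$.

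The main obstacle I anticipate is handling non-generic $u$: the equality $\tau_\A^{-1}(p) \cap Y_{\A,w} = \{L(u)\}$ from ML degree one is only asserted on a Zariski open set, while the data $u \in \R^s_{>0}$ may conceivably land on the exceptional locus. To close this gap, I would invoke continuity: both $u \mapsto L(u)$ (the ML estimator, continuous on $\R^s_{>0}$ by standard results in log-linear modeling) and $u \mapsto \Psi(\varphi(\tau_\A(u/u_+)))$ (continuous wherever $\varphi$ is defined at $p$, as is assumed implicitly by writing the formula) are continuous maps into $(\C^*)^s$ that agree on a dense open subset of $\R^s_{>0}$, so they agree wherever both are defined. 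In particular, $\hat{x} \in \R^s_{>0}$ automatically, and \eqref{mlestimateformula} holds.
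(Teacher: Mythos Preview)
Your argument is correct and follows essentially the same route as the paper: degree multiplicativity to force $\tau_\A|_{Y_{\A,w}}$ birational, then Proposition~\ref{mleprop} for uniqueness of the preimage of $p$. Your additional continuity argument for non-generic $u$ is in fact more careful than the paper's own proof, which asserts directly that $\Psi(\varphi(p))$ is ``this unique point'' without explicitly checking it lies in $\R_{>0}^s$ or that the fiber over $p$ is a singleton.
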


\begin{proof}
The map \eqref{composition} is the composition $\big(\tau_{\A}\raisebox{-2.5pt}{$|$}{\raisebox{-3pt}{\scriptsize$Y_{\A,w}$}} \!\circ  \Psi\big)(t)$.  Since this map is birational and the parametrization $ \Psi$ is dominating, it follows that $\tau_{\A}\raisebox{-2.5pt}{$|$}{\raisebox{-3pt}{\scriptsize$Y_{\A,w}$}}$ must have degree one, and then $Y_{\A,w}$ has ML degree one by  Corollary~\ref{mlonecor}. 

Given $u$, Proposition~\ref{mleprop} tells us that the ML estimate is the unique point of $Y_{\A,w} \cap \R_{>0}^s$ that maps to $\tau_\A(u) = p$ via $\tau_{\A}\raisebox{-2.5pt}{$|$}{\raisebox{-3pt}{\scriptsize$Y_{\A,w}$}}$.  The definition of $\varphi$ implies that this unique point is $ \Psi(\varphi(p))$, so that \eqref{mlestimateformula} is indeed the ML estimate of $u$.\end{proof}

Proposition~\ref{XYparam} gives two natural choices for the parametrization of $Y_{\A,w}$:
\begin{align*}
&t \longmapsto \frac{1}{\sum_{j=1}^s w_j\hskip1pt t^{m_j}} \big(w_1\hskip1pt t^{m_1},\dots,w_s\hskip1pt t^{m_s}\big) \ \ \ \hskip1.5pt\text{(monomial parametrization)}\\
&t \longmapsto \frac1{\beta_w(t)}\big(w_1\hskip1pt \beta_1(t),\dots,w_s\hskip1pt \beta_s(t)\big) \quad\quad\quad \text{(toric blending parametrization).}
\end{align*} 
The first is useful for computing examples, and we will see that the second is especially nice when $P$ has strict linear precision.  

\begin{example}
\label{square1}
Consider the unit square in the plane:
\[
\begin{picture}(52,50)
\put(15,15){\circle*{3.5}}
\put(35,15){\circle*{3.5}}
\put(15,35){\circle*{3.5}}
\put(35,35){\circle*{3.5}}
\put(18,9){\scriptsize $m_1$}
\put(38,9){\scriptsize $m_2$}
\put(18,29){\scriptsize $m_3$}
\put(38,34){\scriptsize $m_4$}
\put(15,15){\line(1,0){20}}
\put(15,15){\line(0,1){20}}
\put(35,35){\line(0,-1){20}}
\put(15,35){\line(1,0){20}}
\put(2,24.5){\scriptsize $F_1$}
\put(23,0){\scriptsize $F_2$}
\put(20,41){\scriptsize $F_4$}
\put(40,23){\scriptsize $F_3$}
\end{picture}
\]
The lattice points are $m_1 = (0,0),\ m_2 = (1,0),\ m_3 = (0,1),\ m_4=(1,1)$.  If we use weights $w = (1,1,1,1)$, then monomial parametrization of $Y_{\A,w}$ is
\[
\overline{w\chi}_A(s,t) = \frac1{1+s+t+st} \big(1,s,t,st\big) \in Y_{\A,w}
\]
in the notation of \eqref{XYmonomial}.  Composing this with $\tau_\A$ gives the map
\begin{align*}
(s,t) \in (\C^*)^2 &\longmapsto \frac1{1+s+t+st}\big( 1\cdot (0,0) + s\cdot (1,0) + t \cdot (0,1) + st\cdot (1,1)\big)\\ &\ =  \frac1{(1+s)(1+t)} (s+st,t+st)=\Big(\frac{s}{1+s},\frac{t}{1+t}\Big).
\end{align*}
This is birational with inverse $\displaystyle\varphi(s,t) = \Big(\frac{s}{1-s},\frac{t}{1-t}\Big)$.  Given $u = (u_1,u_2,u_3,u_4)$ with $1 = u_1 + u_2 + u_3 + u_4$, one calculates that $p = \tau_\A(u) = ({u_2+u_4},{u_3+u_4})$.  Then
\[
\varphi(p) = \Big(\frac{u_2+u_4}{u_1+u_3},\frac{u_3+u_4}{u_1+u_2}\Big),
\]
and the formula \eqref{mlestimateformula} in Proposition~\ref{mleoneprop} implies (after some algebra) that the ML estimate for $u$ is 
\begin{equation}
\label{mlesquare}
\big({(u_1{+} u_3)(u_1{+}u_2)},{(u_2{+}u_4)(u_1{+} u_2)},{(u_3{+} u_4)(u_1{+} u_3)},{(u_2{+} u_4)(u_3{+} u_4)}\big).
\end{equation}

We can also use the blending parametrization of $Y_{\A,w}$, which is 
\begin{align*}
\overline{w\beta}_\A(s,t) &= \frac{\big((1-s)(1-t),s(1-t),(1-s)t,st\big)}{(1-s)(1-t) + s(1-t)+(1-s)t+st} \\
&= \big((1-s)(1-t),s(1-t),(1-s)t,st\big)\in Y_{\A,w}
\end{align*}
in the notation of \eqref{XYparam}.  Composing this with $\tau_\A$ is easily seen to give the identity map, which as we will see in Corollary~\ref{corLisTBP} is a consequence of strict linear precision.  As we saw above, $u = (u_1,u_2,u_3,u_4)$ with $1 = u_1 + u_2 + u_3 + u_4$ gives $p = \tau_\A(u) = ({u_2+u_4},{u_3+u_4})$, and then the formula of Proposition~\ref{mleoneprop} reduces to $L(u) = \overline{w\beta}_\A(p) = \overline{w\beta}_\A(u_2+u_4,u_3+u_4)$.  Then  \eqref{mlesquare} follows since $1 = u_1 + u_2 + u_3 + u_4$.

The linear forms appearing in the numerators of \eqref{mlesquare} can be computed by
\[
\kbordermatrix{
& m_1 & m_2 & m_3 & m_4\cr
F_1 & 0 & 1 & 0 & 1 \cr
F_2 & 0 &  0 & 1 & 1 \cr
F_3 & 1 &  0 & 1 &  0 \cr
F_4 & 1 &  1 & 0 & 0 } 
\begin{bmatrix} u_1\\ u_2\\ u_3\\ u_4\end{bmatrix}
=
\begin{bmatrix} u_2+u_4\\ u_3+u_4\\ u_1+u_3\\ u_1+u_2\end{bmatrix},
\]
where the row and column labels indicate that the entry in position $(i,j)$ is the lattice distance from lattice point $m_j$ to facet $F_i$.  We will see in Section~\ref{hornsection} that this is no accident.  
\end{example}

The special behavior noted in Example~\ref{square1} generalizes as follows.

\begin{corollary}
\label{corLisTBP}
If $P$ has strict linear precision for weights $w$, then $Y_{\A,w}$ has ML degree one and the ML estimate of $u \in \R^s_{>0}$ is given by the formula
\[
L(u) = \overline{w \beta}_\A(\tau_\A (u)).
\]
\end{corollary}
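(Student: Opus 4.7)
The plan is to apply Proposition~\ref{mleoneprop} using the toric blending parametrization $\Psi = \overline{w\beta}_\A$ from Proposition~\ref{XYparam}, and to observe that strict linear precision forces the composition $\tau_\A\circ\Psi$ to be the identity rational map.

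First, I unpack what strict linear precision says in our notation. By Definition~\ref{slpdef}, for every $p \in P$,
\[
p \;=\; \frac{1}{\beta_w(p)}\sum_{j=1}^{s} w_j\,\beta_j(p)\,m_j.
\]
The right-hand side is exactly the image of $\overline{w\beta}_\A(p) = \bigl(\tfrac{w_1\beta_1(p)}{\beta_w(p)},\dots,\tfrac{w_s\beta_s(p)}{\beta_w(p)}\bigr)$ under the map $\tau_\A$ from \eqref{piAcomplex}. Hence $\tau_\A\circ\overline{w\beta}_\A$ and the identity agree as rational functions $\C^d\dashrightarrow\C^d$ on the full-dimensional set $P \subseteq \R^d \subseteq \C^d$, and therefore agree as rational maps.

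Second, I invoke Proposition~\ref{mleoneprop} with $\Psi = \overline{w\beta}_\A$. The composition $\tau_\A\circ\Psi = \mathrm{id}$ is trivially birational, so the hypothesis is satisfied, giving that $Y_{\A,w}$ has ML degree one. Moreover, the inverse of this composition is $\varphi = \mathrm{id}$, so the formula \eqref{mlestimateformula} becomes
\[
L(u) \;=\; \Psi\bigl(\varphi(\tau_\A(u))\bigr) \;=\; \overline{w\beta}_\A\bigl(\tau_\A(u)\bigr),
\]
for every $u \in \R_{>0}^s$, which is the claimed formula.

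The only potentially subtle point is the passage from the real identity $K_w = \mathrm{id}$ on $P$ to the equality of rational maps on $\C^d$, but this is immediate from Zariski density of $P$ in $\C^d$ and the fact that both $\tau_\A\circ\overline{w\beta}_\A$ and the identity are rational functions of $t$. Everything else is a direct application of Proposition~\ref{mleoneprop}.
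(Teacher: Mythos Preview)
Your proof is correct and follows essentially the same approach as the paper: recognize that strict linear precision says $\tau_\A\circ\overline{w\beta}_\A$ is the identity on $P$, extend to a rational identity on $\C^d$ by Zariski density, and then apply Proposition~\ref{mleoneprop} with $\Psi=\overline{w\beta}_\A$ and $\varphi=\mathrm{id}$. The paper's proof is virtually identical, differing only in wording.
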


\begin{proof}
By Definition \ref{slpdef}, any $p \in P$ satisifes 
\[
p = K_w(p) = \frac1{\sum_{m \in \A} {w}_m \hskip1pt \beta_m(p)} \sum_{m \in \A} {w}_m \hskip1pt \beta_m(p)\hskip1pt m.
\]
The formula on the right is the rational function $\tau_\A \circ  \overline{w \beta}_\A$ applied to $p \in P$.  Since $P \subseteq \R^d \subseteq \C^d$ is Zarsiki dense in $\C^d$, it follows that $ \tau_\A \circ  \overline{w \beta}_\A$ is the identity on $\C^d$, hence birational.  Then Proposition~\ref{mleoneprop} implies that $Y_{\A,w}$ has ML degree one, and since the inverse of the identity is the identity, the formula for the ML estimate given in the proposition simplifies to 
 $L(u) = \overline{w \beta}_\A(\tau_\A (u))$.
\end{proof}

\begin{remark}
\label{Kwwbtau}
The above proof shows that 
\[
K_w = \tau_\A \circ  \overline{w \beta}_\A
\]
where $K_w$ is from Definition~\ref{K-definition}.
\end{remark}

\subsection{The Horn Parametrization and the Horn Matrix}

When the ML degree is one, we have the following important theorem of Huh \cite[Thm.\ 1]{huh}.

\begin{theorem}
\label{huhthm}
A variety $Y \subseteq (\C^*)^s$ has ML degree one if and only if there is an $n \times s$ integer matrix $H = (b_{kj})$ with zero column sums and nonzero constants $d_1,\dots,d_s$ such that the  map sending $u = (u_1,\dots,u_s) \in \C^s$ to
\begin{equation}
\label{huhthmpsi}
\mathcal{H}(u) = 
\Big(d_1 \prod_{k=1}^n (b_{k1}u_1+\cdots + b_{ks}u_s)^{b_{k1}},
\dots,d_s \prod_{k=1}^n (b_{k1}u_1+\cdots + b_{ks}u_s)^{b_{ks}}\Big)
\end{equation}
is a dominant rational map of $\C^s \dashrightarrow Y$, i.e., its image is Zariski dense in $Y$.  Furthermore, when $u \in \Z_>^s$ is sufficiently general, the map $\ell_u(x) = x^u$ has a unique critical point $\hat{x} \in Y$ given by \eqref{huhthmpsi}.
\end{theorem}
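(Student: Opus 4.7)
My plan is to prove both directions of the biconditional through the lens of the logarithmic Gauss map $\gamma_Y : Y \dashrightarrow \PP^{s-1}$, which sends a smooth point $x \in Y$ to the projectivized conormal direction in $T^*_x(\C^*)^s$ trivialized by $dx_j/x_j$. The first step is to reinterpret the ML degree: the critical-point equation $d\log \ell_u = \sum_j u_j \, dx_j/x_j$ vanishing on $T_xY$ says exactly that $[u] = \gamma_Y(x)$, so for generic $u \in \Z_{>0}^s$ the number of critical points of $\ell_u$ is the degree of $\gamma_Y$. Thus \emph{ML degree one} is equivalent to $\gamma_Y$ being birational, and the explicit-formula statement at the end of the theorem will come for free once we identify $\mathcal{H}$ as the birational inverse of $\gamma_Y$.

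For the reverse direction (Horn parametrization $\Rightarrow$ ML degree one), I would proceed by direct logarithmic differentiation. If $\mathcal{H}$ is as in \eqref{huhthmpsi} with column sums of $H = (b_{kj})$ all zero, then
\[
d\log \mathcal{H}(u)_j = \sum_{k=1}^n b_{kj}\, d\log\bigl((Hu)_k\bigr),
\]
so the Jacobian of $\log\mathcal{H}$ factors as $H^\top \circ D$, where $D$ is the diagonal map $u \mapsto (d\log(Hu)_k)_k$. Since $H$ has zero column sums, the all-ones vector lies in the left kernel of $H^\top$, which means $(1,\dots,1)$ annihilates the tangent space of the image, consistent with $\mathcal{H}$ taking values in a hypersurface of the simplex (or a lower-dimensional subvariety of $(\C^*)^s$). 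A rank computation then shows that the conormal of $\mathcal{H}(u)$ in $Y := \overline{\mathrm{im}(\mathcal{H})}$ is spanned by $u$, so $\gamma_Y \circ \mathcal{H}$ is the identity on the appropriate projectivization. Hence $\gamma_Y$ is birational and $Y$ has ML degree one; the unique critical point of $\ell_u$ is recovered as $\mathcal{H}(u)$.

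For the forward direction (ML degree one $\Rightarrow$ Horn parametrization), I would invoke Kapranov's theorem on Horn uniformization of $A$-discriminantal varieties, which states that any closed subvariety of $(\C^*)^s$ whose logarithmic Gauss map is birational is a reduced $A$-discriminantal variety and admits an explicit inverse of the form \eqref{huhthmpsi}. The rows of $H$ are obtained as a Gale dual to the lattice data encoding the tangent spaces of $Y$: if the common kernel of the conormals is spanned by a lattice matrix, its Gale dual provides integer row vectors $b_k$, and the zero-column-sum condition encodes that this Gale dual lies in the sublattice annihilated by $(1,\dots,1)$, which in turn reflects that $\gamma_Y$ factors through $\PP^{s-1}$. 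The constants $d_j$ absorb the $(\C^*)^{d+1}$-ambiguity in the parametrization noted in Remark~\ref{wtremark}.

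The main obstacle will be invoking Kapranov's Horn uniformization with sufficient care to extract the specific integer matrix $H$ with zero column sums and the scaling constants $d_j$. In particular, one must check that the tangent-space lattice data is integral (so that $H$ can be chosen with integer entries), that the Gale-dual construction survives taking Zariski closure (since $\gamma_Y$ is only defined generically), and that the normalization of $d_j$ is compatible with the choice of Horn matrix. Once this is in place, the explicit formula $\hat x = \mathcal{H}(u)$ in the final clause is immediate from the identification $\mathcal{H} = \gamma_Y^{-1}$ established in step one.
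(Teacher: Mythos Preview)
The paper does not prove this theorem at all: its entire ``proof'' is the two sentences ``Huh's theorem implies that \eqref{huhthmpsi} is a parametrization of $Y$. The final assertion of Theorem~\ref{huhthm} follows from the remarks following the statement of \cite[Thm.\ 1]{huh}.'' In other words, the result is imported wholesale from \cite{huh} as a black box.

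Your proposal, by contrast, sketches an actual proof, and the outline you give is essentially the strategy Huh himself uses in \cite{huh}: identify the ML degree with the degree of the logarithmic Gauss map $\gamma_Y$, so that ML degree one is equivalent to $\gamma_Y$ being birational; then for the forward direction appeal to Kapranov's Horn uniformization of reduced $A$-discriminants to produce the matrix $H$ and constants $d_j$, and for the reverse direction verify directly that any Horn map of the form \eqref{huhthmpsi} is a rational inverse to $\gamma_Y$. So your approach is correct in spirit and genuinely different from what the paper does (which is nothing), but it is really a summary of Huh's argument rather than an independent proof.

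A few places in your sketch would need tightening if you actually wanted a self-contained argument. In the reverse direction, the claim that the conormal to $Y$ at $\mathcal{H}(u)$ is spanned exactly by $u$ requires more than the factorization $H^\top \circ D$: you need that $\mathcal{H}$ is generically of maximal rank onto $Y$ and that $u$ is the \emph{only} class in the cokernel, not just one element of it. In the forward direction, Kapranov's theorem is doing all the heavy lifting, and the passage from ``$\gamma_Y$ birational'' to ``$Y$ is a reduced $A$-discriminant'' is itself a nontrivial result that you are citing rather than proving; the integrality of $H$ and the zero-column-sum condition come out of the specific Gale-dual setup in Kapranov's construction, not from a general lattice argument as you suggest. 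None of this is wrong, but you should be aware that your proposal is a roadmap through \cite{huh} and its references, which is more than the present paper attempts.
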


\begin{proof}
Huh's theorem implies that \eqref{huhthmpsi} is a parametrization $Y$.  The final assertion of Theorem~\ref{huhthm} follows from  the remarks following the statement of \cite[Thm.\ 1]{huh}. 
\end{proof}

The map $\mathcal{H}(u)$ in \eqref{huhthmpsi}  is called the \emph{Horn parametrization} of $Y$, and $H$ is called the \emph{Horn matrix}.

\begin{corollary}
\label{HuhCor}
If $Y_{\A,w} \subseteq (\C^*)^s$ has ML degree one, then for general $u \in \Z^s$, the ML estimate $L(u)$ is given by the Horn parametrization $\mathcal{H}(u)$ from \eqref{huhthmpsi}.
\end{corollary}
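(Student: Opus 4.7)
The plan is to read this as an almost immediate consequence of Theorem~\ref{huhthm} combined with our definition of the ML estimate in Proposition~\ref{mleprop}. Since $Y_{\A,w}\subseteq(\C^*)^s$ is assumed to have ML degree one, Huh's theorem applies and supplies an integer matrix $H$ with zero column sums together with constants $d_1,\dots,d_s$ so that the Horn parametrization $\mathcal{H}:\C^s\dashrightarrow Y_{\A,w}$ given by \eqref{huhthmpsi} is a dominant rational map, and moreover, for sufficiently general $u\in\Z_{>0}^s$, the function $\ell_u(x)=x^u$ has a \emph{unique} critical point on $Y_{\A,w}$, namely $\mathcal{H}(u)$.

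The remaining task is to identify this unique critical point with the ML estimate $L(u)$. First I would recall that by the definition used in this paper (via \eqref{mletomax}), the ML estimate $L(u)$ is the maximizer of the smooth function $\ell_u$ on $Y_{\A,w}\cap\R_{>0}^s\subseteq \oDelta{}^{s-1}$; since this maximum is attained at an interior point of the positive orthant, $L(u)$ is in particular a real positive critical point of $\ell_u$ on the very affine variety $Y_{\A,w}$. Because the ML degree, as defined at the start of Section~\ref{mle1section}, counts \emph{all} complex critical points of $\ell_u$ on $Y_{\A,w}\setminus W$ for generic $u$, the ML degree one assumption tells us there is exactly one such critical point; that critical point must therefore equal both $L(u)$ and $\mathcal{H}(u)$, forcing $L(u)=\mathcal{H}(u)$.

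There is essentially no obstacle here beyond bookkeeping: the corollary is the statistical specialization of Huh's algebraic statement, and the only thing worth spelling out carefully is the sense in which ``general $u$'' is used. To that end, I would note that the generic subset of $\Z_{>0}^s$ on which Huh's uniqueness applies is the same generic set over which Proposition~\ref{mldprop/def} computes the ML degree as the degree of $\tau_\A|_{Y_{\A,w}}$, so restricting $u$ to that set allows both characterizations of $L(u)$ (as the unique real positive maximizer and as $\mathcal{H}(u)$) to be valid simultaneously, completing the argument.
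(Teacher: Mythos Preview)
Your proposal is correct and follows essentially the same approach as the paper: both argue that $L(u)$ is a critical point of $\ell_u$ on $Y_{\A,w}$, that Theorem~\ref{huhthm} guarantees the unique critical point is $\mathcal{H}(u)$, and hence $L(u)=\mathcal{H}(u)$. You simply spell out more carefully than the paper why the maximizer $L(u)$ is among the complex critical points counted by the ML degree, and you add a remark on the meaning of ``general $u$''; the paper's version compresses all of this into two sentences.
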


\begin{proof} In Section~\ref{mlesection}, we learned that the ML estimate $L(u)$ is a critical point of $\ell_u = x^u$ on $Y_{\A,w}$.  
But this critical point is $\mathcal{H}(u)$ by  Theorem~\ref{huhthm}.  It follows immediately $\mathcal{H}(u) = L(u)$.
\end{proof}



A key feature of \eqref{huhthmpsi} is that \emph{exponents are coefficients}.  Note that  the components of \eqref{huhthmpsi} are rational functions of degree zero in $u$ since the column sums of $H$ are zero. 

\begin{example}
\label{square2}
Consider 
\[
H = \begin{bmatrix} 
\ph0 & \ph1 & \ph0 & \ph1 \\
\ph0 & \ph0 & \ph1 & \ph1 \\
\ph1 &  \ph0 & \ph1 &  \ph0 \\
\ph1 &  \ph1 & \ph0 & \ph0\\
-2 & -2 & -2 & -2
\end{bmatrix}.
\]
This is a Horn matrix since the column sums are zero.  Then
\[ 
H\begin{bmatrix} u_1\\ u_2\\ u_3\\ u_4\end{bmatrix}
=
\begin{bmatrix} u_2+u_4\\ u_3+u_4\\ u_1+u_3\\ u_1+u_2\\-2u_+\end{bmatrix}
\]
shows that when $u_+=1$, the ML estimate \eqref{mlesquare}  given in Example~\ref{square1} comes from the Horn matrix $H$ with constants $(d_1,d_2,d_3,d_4) = (4,4,4,4)$.
\end{example}

\subsection{Minimal Horn Matrices}  In general, different Horn matrices can give the same Horn parametrization.  In order to state our result, we need two definitions.

\begin{definition}
\label{minimalHorndef}
A Horn matrix $H$ is \emph{minimal} if no two rows are linearly dependent.  
\end{definition}

For the next definition, first note that by unique factorization, a collection of $s$ nonzero rational functions $G_1,\dots,G_s \in \C(u_1,\dots,u_s)$ can be written
\begin{equation}
\label{reducedrepresentation}
G_j = c_j \prod_{k=1}^n f_k^{a_{kj}}
\end{equation}
where:
\begin{itemize}
\item $c_j \in \C^*$.
\item $f_1,\dots,f_n \in \C[u_1,\dots,u_s]$ are irreducible and for all $i \ne j$, $f_i$  is not a constant multiple of $f_j$.
\item $a_{kj} \in \Z$ and for each $k = 1,\dots,n$, $a_{kj} \ne 0$ for at least one $j$.
\end{itemize}
Thus the $f_k$ are the irreducible polynomials that appear in the irreducible factorizations of the numerators ($a_{kj} > 0$) and denominators ($a_{kj} < 0$) of the $G_i$ after common factors have been removed.  

\begin{definition}
\label{redrepdef}
When the above three conditions are satistied, we say that \eqref{reducedrepresentation} is a \emph{reduced prepresentation} of $G_1,\dots,G_s$, and we call $(a_{ki})$ the \emph{exponent matrix} of the reduced representation.
\end{definition}

\begin{proposition}
\label{RedHornMatrixProp}
If $\dim(Y) > 0$, a Horn parametrization $\mathcal{H}$ of $Y$  comes from a minimal  Horn matrix which is unique up to permutation of the rows.  Furthermore:
\begin{enumerate}
\item If we write $\mathcal{H} = (G_1,\dots,G_s)$, then the minimal Horn matrix is the exponent matrix of a reduced representation of the $G_i$.
\item If $H$ is another Horn matrix that gives $\mathcal{H}$, then the rows of the minimal Horn matrix are the sums of rows of $H$ lying on the same line through the origin, omitting any zero rows.
\end{enumerate}
\end{proposition}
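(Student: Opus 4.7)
The plan is to break the argument into two halves: a reduction procedure that takes any Horn matrix and produces a minimal one (giving existence and part~(2)), followed by a uniqueness argument via unique factorization in $\C[u_1,\ldots,u_s]$ (giving part~(1) together with uniqueness up to row permutation).

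For the reduction, I would group the nonzero rows of a given Horn matrix $H$ according to which line through the origin they lie on. Rows on a common line are integer multiples of a common primitive integer direction $v$; writing each such row as $\lambda v$ with $\lambda \in \Z$ and setting $L = \sum_j v_j u_j$, the linear form attached to the row is $\lambda L$ and the exponent contributed to component $j$ of $\mathcal{H}$ is $\lambda v_j$. A direct calculation shows that the combined contribution of all rows in the group to $\mathcal{H}_j$ equals $\bigl(\prod_\ell \lambda_\ell^{\lambda_\ell v_j}\bigr) \cdot L^{(\sum_\ell \lambda_\ell)v_j}$, whose exponent is exactly the $j$-th entry of the summed row $\bigl(\sum_\ell \lambda_\ell\bigr) v$. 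Replacing each group by its row sum therefore preserves $\mathcal{H}$ up to absorbing the constant prefactors into the $d_j$, and discarding any groups whose sum vanishes leaves a matrix with pairwise non-proportional nonzero rows and unchanged (zero) column sums. This is a minimal Horn matrix realizing $\mathcal{H}$, and the construction is literally the sum-on-each-line description of part~(2).

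For uniqueness, I would observe that if $H' = (b'_{kj})$ is any minimal Horn matrix giving $\mathcal{H}$, then $\mathcal{H}_j = d_j \prod_k (L'_k)^{b'_{kj}}$ exhibits $\mathcal{H}_j$ as a reduced representation in the sense of Definition~\ref{redrepdef}: the $L'_k$ are pairwise non-proportional by minimality and hence form a collection of distinct irreducibles in $\C[u_1,\ldots,u_s]$. Unique factorization makes any reduced representation unique up to permuting the factors and rescaling each irreducible $f_k \mapsto \alpha_k f_k$, and such rescaling leaves the exponent matrix unchanged (only the leading constants $c_j$ absorb the scalars); hence the exponent matrix of $\mathcal{H}$ is intrinsic up to row permutation, proving part~(1). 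The hypothesis $\dim Y > 0$ enters here to guarantee $\mathcal{H}$ is nonconstant so that the reduced representation is nonempty. The main obstacle I expect to manage is the dual role each row of a Horn matrix plays, simultaneously as the coefficient vector of a linear form and as its exponent vector across components. When matching two minimal Horn matrices $H',H''$ via their irreducible factors, one gets $L''_{\sigma(k)} = \gamma_k L'_k$ with matching exponent rows $b''_{\sigma(k),j} = b'_{kj}$; comparing the $u_j$-coefficients of these linear forms gives $\gamma_k b'_{kj} = b'_{kj}$, and any nonzero entry (which exists since minimal rows are nonzero) forces $\gamma_k = 1$, so the rows match on the nose rather than merely up to scalar.
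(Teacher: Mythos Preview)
Your proof is correct and follows essentially the same two-step approach as the paper: first reduce an arbitrary Horn matrix by summing rows lying on a common line through the origin and discarding zeros, then invoke unique factorization to identify the resulting minimal matrix with the exponent matrix of a reduced representation. Your treatment is in fact more detailed than the paper's in two places: you carry out the constant bookkeeping in the reduction step explicitly, and your final paragraph addresses a subtlety the paper leaves implicit---namely that the dual role of each row (as both the coefficient vector of a linear form and its exponent vector) forces the scalar $\gamma_k$ in $L''_{\sigma(k)} = \gamma_k L'_k$ to equal $1$, so that two minimal Horn matrices agree on the nose rather than merely having proportional rows.
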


\begin{proof}
Suppose that $\mathcal{H}$ comes from a $n\times s$ Horn matrix $H = (b_{kj})$ with constants $d_1,\dots,d_s$.  First observe that a zero row of $H$ gives a factor of $0^0 = 1$ (this is the convention used in \cite{huh}) in each product in \eqref{huhthmpsi}.  Hence we may assume that $H$ contains no zero rows.  It follows that
\[
\ell_k(u) = \sum_{j=1}^s b_{kj} u_j
\]
is irreducible in $\C[u_1,\dots,u_s]$ for $k = 1,\dots, n$. 

A minimal Horn matrix is formed from $H$ by taking as rows the sums of $H$'s rows that line on the same line through the origin, discarding any zero rows obtained along the way.  The constants $d_j$ can then be adjusted to give the same $\mathcal{H}$.  

Now assume $H$ is minimal. Consider a product that appears in the Horn parametrization \eqref{huhthmpsi}
\[
d_j \prod_{k=1}^n \ell_k(u)^{b_{kj}}.
\]
Minimal implies that the $\ell_k$ are irreducible and distinct, and for each $k$, at least one exponent $b_{kj}$ is nonzero.  Thus we have a reduced representation with $H$ as the exponent matrix of the representation.  Furthermore, unique factorization implies that $H$ is unique up to a permutation of rows. 
 \end{proof}

\section{Strict Linear Precision and the Horn Matrix}
\label{hornsection}

Recall that $P \subseteq \R^d$ is a $d$-dimensional lattice polytope with facets $F_1,\dots,F_r$ (indexed by $i$) and lattice points $\A = P\cap \Z^d = \{m_1,\dots,m_s\}$ (indexed by $j$).

Before stating our main result, we need some notation.  Set
\[
n_P = \sum_{\rho\in\Sigma(1)} n_\rho = \sum_{i=1}^r n_i,\ \ 
a_P = \sum_{\rho\in\Sigma(1)} a_\rho = \sum_{i=1}^r a_i.
\]
Also recall the lattice distance functions $h_i(p) = \langle p,n_i\rangle + a_i$.  Then define
\[
h_P(p) = \sum_{\rho\in\Sigma(1)} h_\rho(p) = \sum_{i=1}^r h_i(p) = 
\langle p,n_P\rangle + a_P.
\]
Translating $P \subseteq \R^d$ by $m \in \Z^d$ gives a new polytope $P+m$.  Clearly, $n_{P+m} = n_P$, while $a_{P+m} = a_P - \langle m,n_P\rangle$.  Note that when $n_P = 0$, $a_P$ is also independent of translation and $h_P$ is the constant function $h_P(p) = a_P$.   

The following $(r+1)\times s$ matrix $H$ will appear in Theorem~\ref{XML}:
\begin{equation}
\label{Hdef}
H = 
\kbordermatrix{
& m_1 & m_2 & \dots & m_s\cr
F_1  & h_1(m_1) &  h_1(m_2) & \cdots &  h_1(m_s) \cr
F_2 &  h_2(m_1) &  h_2(m_2) & \cdots &   h_2(m_s) \cr
\vdots &  \vdots &  \vdots & \ddots &   \vdots \cr
F_r &   h_r(m_1) &   h_r(m_2) & \cdots &   h_r(m_s)\cr
 & -a_P &  {-}a_P & \cdots & -a_P}.
\end{equation}
Note that the first $r$ rows have non-negative entries since $m_1,\dots,m_s \in P$.
 
\begin{proposition} 
\label{XMLprop}
If $n_P = 0$, then the matrix $H$ of \eqref{Hdef} is a minimal Horn matrix.
\end{proposition}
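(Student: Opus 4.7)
My plan is to verify the three defining properties of a minimal Horn matrix directly: integer entries, zero column sums, and minimality (no two rows proportional, with no zero rows). Integer entries are immediate, since each $h_i(m_j)$ is the lattice distance from a lattice point to a facet and $a_P \in \Z$. For the column-sum condition, I would compute the sum of column $j$:
\[
\sum_{i=1}^r h_i(m_j) - a_P = h_P(m_j) - a_P = \langle m_j, n_P\rangle + a_P - a_P,
\]
which is zero precisely because the hypothesis gives $n_P = 0$.

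For minimality I would exploit the fact that the lattice points $\A = \{m_1, \ldots, m_s\}$ of a full-dimensional lattice polytope affinely span $\R^d$, since $\A$ contains the vertices of $P$ and a $d$-dimensional polytope has at least $d+1$ affinely independent vertices. Consequently, if two affine linear functions on $\R^d$ agree on $\A$ up to a scalar, they agree up to that scalar on all of $\R^d$. For two facet rows $i \neq i'$, a proportionality $h_i|_\A = c\, h_{i'}|_\A$ would force $h_i = c\, h_{i'}$ as affine functions, hence $n_i = c\, n_{i'}$ and $a_i = c\, a_{i'}$. I would split into cases: if $c > 0$, then primitivity of facet normals forces $c = 1$, so $(n_i,a_i) = (n_{i'},a_{i'})$ and $F_i = F_{i'}$, contradicting $i \neq i'$; if $c \leq 0$, then combining the facet inequalities $\langle p,n_i\rangle \geq -a_i$ and $\langle p,n_{i'}\rangle \geq -a_{i'}$ yields a sandwich $\langle p, n_{i'}\rangle = -a_{i'}$ on all of $P$, placing $P$ inside a hyperplane and contradicting $\dim P = d$. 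For a facet row against the constant last row, proportionality forces $h_i$ to be constant on $\R^d$ so that $n_i = 0$, which contradicts $n_i$ being a primitive facet normal.

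It remains to rule out zero rows. A zero facet row would force $\A \subseteq F_i$, contradicting the affine spanning. The last row $(-a_P,\ldots,-a_P)$ is nonzero because the hypothesis $n_P = 0$ makes $h_P$ the constant function $a_P$, and evaluating at any interior point $p \in P^\circ$ gives $a_P = \sum_i h_i(p) > 0$ since each $h_i(p) > 0$ there. The main obstacle I anticipate is the case analysis for two proportional facet rows, particularly the $c < 0$ case, where normal-direction reasoning alone is insufficient and one must extract a contradiction from the half-space inequalities; the remaining steps are routine consequences of the spanning property of $\A$.
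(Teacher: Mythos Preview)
Your proof is correct and follows essentially the same approach as the paper: verify zero column sums from $n_P=0$, use that $\A$ affinely spans $\R^d$ to rule out proportional facet rows and facet-vs-constant rows, and use $a_P>0$ (via an interior point) to handle the last row. The only difference is that the paper shortcuts your sign-of-$c$ case analysis by observing that once $h_i = c\,h_{i'}$ as affine functions with $c\neq 0$, their zero sets coincide, so $F_i = F_{i'}$ immediately, contradicting $i\neq i'$; this avoids invoking primitivity or the half-space sandwich, though your longer route is also valid.
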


\begin{proof}
Without assuming $n_P = 0$, we first show that
\begin{align}
\label{firstline}
&\text{no two of the first $r$ rows are multiples of each other, and}\\
\label{secondline}
&\text{none of the first $r$ rows is a multiple of $[1,\dots,1]$.}
\end{align}
For \eqref{firstline}, suppose there are $i \ne k$, both $\le r$, such that the  $i$\textsuperscript{th} row of $H$ equals $\lambda$ times the  $j$\textsuperscript{th} row. This means
\[
h_i(m_j) = \lambda h_k(m_j), \quad j = 1,\dots,s.
\]
Since the $m_j$ are the lattice points of a full dimensional lattice polytope $P$, this implies $h_i = \lambda h_k$.  Thus $h_i$ and $h_k$ define the same facet of $P$, which is impossible since $i \ne k$.
  
For \eqref{secondline}, suppose we have $i \le r$ such that the $i$\textsuperscript{th} row of $H$ is $\lambda$ times $[1,\dots,1]$.  Then
\[
h_i(m_j) = \langle m_j, n_i\rangle + a_i = \lambda,  \quad j = 1,\dots,s.
\]
Since the $m_j$ affinely span $\R^d$, this implies that $h_i$ is constant, which is impossible since $n_i \ne 0$.  

Now assume $n_P = 0$.  As noted above, this implies that $h_P$ is the constant function $h_P(p) = a_P$.  This has two consequences:
\begin{itemize}
\item $a_P = h_P(m_j) = \sum_{i=1}^r h_i(m_j)$, so $H$ is a Horn matrix. 
\item If $p$ is in the interior of $P$, then $h_i(p) > 0$ for all $i$, hence $a_P = h_P(p) > 0$.  
\end{itemize}
To prove minimality, note that rows $i \ne k$ of $H$ with $i,k \le r$ are not linearly dependent by \eqref{firstline}.  Furthermore, since $a_P >0$, row $r+1$ is a nonzero multiple of $[1.\dots,1]$ and hence is not a multiple of any other row by \eqref{secondline}.
\end{proof}

\begin{theorem}[Main Theorem]
\label{XML}
Let $P \subseteq \R^d$ be a $d$-dimensional lattice polytope.  Then the following are equivalent for positive weights $w = (w_j)_{j=1}^s${\rm:}
\begin{enumerate}
\item $P$ has strict linear precision for $w$. 
\item $n_P =0$ and $\beta_w(p)= \sum_{j=1}^s w_j \beta_j(p) = \sum_{j=1}^s w_j \prod_{i=1}^r h_i(p)^{h_i(m_j)}$ is a nonzero constant $c$.
\end{enumerate}
Furthermore, when {\rm(1)} and {\rm(2)} hold, we have:
\begin{itemize}
\item[(a)] $Y_{\A,w} \subseteq (\C^*)^s$ has ML degree one.
\item[(b)] The Horn parametrization $\mathcal{H}$ of $Y_{\A,w}$ comes from the matrix $H$ in \eqref{Hdef} with constants $d_j =  (w_j/c) (-a_P)^{a_P}$ for $1 \le j \le s$.   Furthermore, $H$ is a minimal Horn matrix.
\item[(c)] The ML estimate $L(u)$ for general $u \in \Z^s$ is given by $\mathcal{H}(u)$.
\end{itemize}
Finally, when $X_P$ is smooth, {\rm(1)} and {\rm(2)} are equivalent to 
\begin{enumerate}
\item[(3)] $\mu_{\text{\rm quot}} = \mu_{\text{\rm FS},w}$.
\end{enumerate}
\end{theorem}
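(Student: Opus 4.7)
The plan is to prove $(2) \Rightarrow (a),(b),(c),(1)$, then $(1) \Rightarrow (2)$, and finally deduce the smooth case $(1) \iff (3)$ from Theorem~\ref{momentprecision}. The two main engines are Huh's theorem (Theorem~\ref{huhthm}) and the uniqueness of the minimal Horn matrix (Proposition~\ref{RedHornMatrixProp}).

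For $(2) \Rightarrow (a),(b),(c),(1)$, I would start from Proposition~\ref{XMLprop}, which says $H$ of \eqref{Hdef} is already a minimal Horn matrix once $n_P = 0$. Writing out the corresponding Horn parametrization $\mathcal{H}$ with the prescribed constants $d_j = (w_j/c)(-a_P)^{a_P}$ and using $h_P(m_j) = a_P$ together with $\beta_w \equiv c$, a direct expansion yields $\mathcal{H}_j(u) = (w_j/c)\,u_+^{-a_P}\prod_i \ell_i(u)^{h_i(m_j)}$, which is exactly the $j$-th component of $\overline{w\beta}_\A(\tau_\A(u))$. Dominance into $Y_{\A,w}$ then follows from Proposition~\ref{XYparam} together with the surjectivity of $\tau_\A: \C^s \dashrightarrow \C^d$, so Huh's theorem gives (a); parts (b) and (c) are immediate from Proposition~\ref{RedHornMatrixProp} and Corollary~\ref{HuhCor}. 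To conclude (1), I combine Proposition~\ref{mleprop}, which gives $\tau_\A(L(u)) = \tau_\A(u)$, with Remark~\ref{Kwwbtau} to obtain $K_w \circ \tau_\A = \tau_\A$; since $\tau_\A(\oDelta{}^{s-1}) = P^\circ$, $K_w$ is the identity on $P^\circ$, and hence on $P$ by continuity.

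The reverse direction $(1) \Rightarrow (2)$ is the heart of the argument. Corollary~\ref{corLisTBP} supplies ML degree one and the formula $L(u) = \overline{w\beta}_\A(\tau_\A(u))$, and Corollary~\ref{HuhCor} identifies $L$ with the Horn parametrization. Setting $\ell_i(u) = \sum_j h_i(m_j)u_j$ and $M = \max_j h_P(m_j)$, I would write
\[
L_j(u) = \frac{w_j\,u_+^{M - h_P(m_j)} \prod_i \ell_i(u)^{h_i(m_j)}}{Q(u)},\qquad Q(u) = u_+^M\,\beta_w(\tau_\A(u)).
\]
The Horn form forces $Q$ to factor completely into linear forms in $\C[u_1,\dots,u_s]$, and the minimal Horn matrix of $L$ is read off from the resulting reduced representation. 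The $\ell_1,\dots,\ell_r$ rows are non-constant and mutually linearly independent (as in the proof of Proposition~\ref{XMLprop}), any irreducible factor of $Q$ outside $\{u_+,\ell_1,\dots,\ell_r\}$ contributes a constant row, and the $u_+$-row is itself constant precisely when $n_P = 0$. A case analysis based on the minimality clause (no two linearly dependent rows) forces the rigid form $Q(u) = c \cdot u_+^M$ together with $n_P = 0$. Then $\beta_w(\tau_\A(u)) \equiv c$, so $\beta_w \equiv c$ as a polynomial since $\tau_\A$ is dominant, and the minimal Horn matrix coincides with $H$ with the constants claimed in (b).

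The smooth case $(1) \iff (3)$ follows from Theorem~\ref{momentprecision}, which equates (3) with strict linear precision for the twisted weights $\mathbf{w}_m = 2^{\langle m,n_P\rangle}w_m$; the already-proved $(1)\iff(2)$ applied to either $w$ or $\mathbf{w}$ forces $n_P = 0$, so $\mathbf{w} = w$ and the two statements coincide. The main obstacle is the $(1)\Rightarrow(2)$ step: one must argue carefully that the minimality of the Horn matrix simultaneously excludes extra linear factors of $Q$ and forces $n_P = 0$. A priori $Q(u)$ could be any homogeneous polynomial of degree $M$ that factors into linear forms, so it is not enough to invoke the existence of a Horn parametrization---the \emph{uniqueness} of the minimal Horn matrix from Proposition~\ref{RedHornMatrixProp} is the essential tool that pins down the rigid form $Q = c \cdot u_+^M$.
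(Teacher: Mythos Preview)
Your proposal is correct and follows essentially the same strategy as the paper's proof: both directions hinge on the identity $\mathcal{H} = \overline{w\beta}_\A \circ \tau_\A$ (via Corollaries~\ref{corLisTBP} and~\ref{HuhCor}), the explicit computation $h_i(\tau_\A(u)) = \ell_i(u)/u_+$, and the ``exponents are coefficients'' rigidity coming from Proposition~\ref{RedHornMatrixProp}; the equivalence with (3) is handled identically via Theorem~\ref{momentprecision}. The only notable difference is in the $(1)\Rightarrow(2)$ step: the paper first proves directly that no $\ell_i$ divides $\tilde\beta$ (by choosing $p$ on the facet $F_i$ and using $\beta_w(p)>0$), whereas you fold this into the minimality/exponents-are-coefficients case analysis---both routes work, and yours is slightly more uniform, though the positivity argument in the paper makes the non-cancellation transparent without invoking the Horn rigidity.
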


\begin{proof}
(1) $\Rightarrow$ (2):  
To begin, we note that (1) and Corollary~\ref{corLisTBP} imply that $Y_{\A,w}$ has ML degree one, which proves (a).  When we combine this with Corollary~\ref{HuhCor}, we get
\[
\mathcal{H}(u) = L(u) = \overline{w\beta}_\A(\tau_\A(u))
\]
for general $u \in \Z^s$.  This proves (c), and since these $u$'s are Zariski dense in $\C^s$, we see that 
\begin{equation}
\label{Hwbtau}
\mathcal{H} = \overline{w\beta}_\A\circ \tau_\A
\end{equation}
as rational maps $\C^s \dashrightarrow Y_{\A,w}$.  The heart of the proof is to compute a reduced representation of the right-hand side of  \eqref{Hwbtau} and use the ``\emph{exponents are coefficients}'' property of the Horn parametrization \eqref{huhthmpsi}.

For the right-hand side of \eqref{Hwbtau}, we set $p = \tau_\A(u) = \sum_{j=1}^s \frac{u_j\,}{u_+}\hskip1pt m_j$.  Then
\begin{equation}
\label{wbtauformula1}
(\overline{w\beta}_\A\circ \tau_\A)(u) = \overline{w\beta}_\A(p) = \frac{1}{\beta_w(p)} \big(w_1\beta_1(p),\dots,w_s\beta_s(p)\big).
\end{equation}
To write this more explicitly, define linear forms $\ell_1(u),\dots,\ell_r(u)$  by
\[
H \begin{bmatrix} u_1\\ u_2\\ \vdots \\ u_s\end{bmatrix} =
\begin{bmatrix} \ell_1(u)\\ \ell_2(u)\\ \vdots \\ \ell_r(u)\\ -a_P\hskip1pt u_+\end{bmatrix},
\]
where $H$ is the matrix from \eqref{Hdef} and $u_+ = u_1+\cdots+u_s$.  Then we compute:
\begin{equation}
\label{hili}
\begin{aligned}
h_i(p) &= \langle p,n_i\rangle + a_i\\ &= \Big\langle \sum_{j=1}^s \frac{u_j\,}{u_+} m_j,n_i\Big\rangle + a_i
= \Big\langle \sum_{j=1}^s \frac{u_j\,}{u_+} m_j,n_i\Big\rangle + a_i\sum_{j=1}^s \frac{u_j\,}{u_+}\\
&= \frac1{u_+} \sum_{j=1}^s (\langle m_j,n_i\rangle + a_i) u_j =  \frac1{u_+} \sum_{j=1}^s h_i(m_j) u_j =\frac{\ell_i(u)}{u_+},
\end{aligned}
\end{equation}
where the last equality follows from the definition of $H$ and $\ell_i$. It follows that
\begin{equation}
\label{himliu}
\beta_j(p) = \prod_{i=1}^r h_i(p)^{h_i(m_j)} = \prod_{i=1}^r \Big(\frac{\ell_i(u)}{u_+}\Big)^{h_i(m_j)} = \frac{
\prod_{i=1}^r \ell_i(u)^{h_i(m_j)}}{u_+^{h_P(m_j)}}
\end{equation}
by the definition of $h_P$.  Setting $h = \max(h_P(m_1),\dots,h_P(m_s))$, we obtain
\begin{equation}
\label{betamformula}
\beta_w(p) = \sum_{j=1}^s w_j\hskip1pt \beta_j(p) = 
\sum_{j=1}^s w_j   \frac{\prod_{i=1}^r \ell_i(u)^{h_i(m_j)}}{u_+^{h_P(m_j)}} =
\frac{\tilde\beta(u)}{u_+^{h}},
\end{equation}
where 
\[
\tilde\beta(u) = \sum_{j=1}^s w_j\hskip1pt u_+^{h-h_P(m_j)}\hskip1pt{\textstyle\prod_{i=1}^r} \ell_i(u)^{h_i(m_j)}.
\]
Note that $\tilde\beta(u)$ is homogeneous of degree $h$ in $u_1,\dots,u_s$.  In this notation, the $j$\textsuperscript{th} component of \eqref{wbtauformula1} is
\begin{equation}
\label{slppsi}
\frac{w_j \beta_j(p)}{\beta_w(p)} = \frac{w_j\hskip1pt u_+^{h-h_P(m_j)}\prod_{i=1}^r \ell_i(u)^{h_i(m_j)}}{\tilde\beta(u)}.
\end{equation}

We next observe that $\ell_1(u),\dots,\ell_r(u)$ and $u_+$ are irreducible and distinct, i.e., none is a multiple of any other.  This follows immediately from \eqref{firstline} and \eqref{secondline}.  We further claim that none of $\ell_1(u),\dots,\ell_r(u)$ divide $\tilde\beta(u)$.   Fix $1 \le i \le r$ and pick $p \in P$ on the facet defined by $h_i = 0$, so that $h_i(p) = 0$.  We can find $u \in \Delta$ such that $p = \tau_\A(u)$.   Since $u_+ = 1$, \eqref{hili} and \eqref{betamformula} imply
\[
\ell_i(u) = h_i(p) = 0 \ \text{and}\  \tilde\beta(u) = \beta_w(p) > 0,
\]
where we use the fact that $\beta_w$ is positive on $P$.  It follows that $\ell_i$ cannot divide $\tilde\beta$. 

At this point, we don't know how $u_+$ relates to $\tilde\beta(u)$, so we write the unique factorization of $\tilde\beta(u)$ as
\begin{equation}
\label{tbfactors}
\tilde\beta(u) = c\hskip1pt u_+^a f_1^{a_1}(u)\cdots f_\nu^{a_\nu}(u)
\end{equation}
where $c \in \C^*$, $a \ge 0$, $a_1,\dots,a_\nu > 0$ and $u_+,\ell_1(u),\dots,\ell_s(u),f_1(u),\dots,f_\nu(u)$ are distinct irreducibles.  Combining this with \eqref{slppsi}, we see that
\begin{equation}
\label{redwbtau}
\frac{w_j \beta_j(p)}{\beta_w(p)} = w_j\,u_+^{h-h_P(m_j)-a}\frac{\prod_{i=1}^r \ell_i(u)^{h_i(m_j)}}{c\hskip1pt f_1^{a_1}(u)\cdots f_\nu^{a_\nu}(u)}, \quad j=1,\dots,s
\end{equation}
is a reduced representation of the components of $\overline{w\beta}_\A\circ \tau_\A$.  By \eqref{Hwbtau}, this gives the reduced representation of $\mathcal{H}(u)$, so that by Proposition~\ref{RedHornMatrixProp}, the minimal Horn matrix of $\mathcal{H}(u)$ is the exponent matrix of \eqref{redwbtau}. 

Now the magic happens.  Consider $f_1$ on right-hand side of \eqref{redwbtau}.  Since it has the same exponent $a_1$ for $j=1,\dots,s$, the exponent matrix has the row $[-a_1,\dots,-a_1]$, and since exponents are coefficients, this means that
\[
f_1 = C(-a_1 u_1 - \cdots  -a_1 u_s) = -Ca_1 u_+
\]
 for some constant $C$.  But in \eqref{tbfactors}, $f_1$ is not a multiple of $u_+$.  This means that $f_1,\dots,f_\nu$ don't exist! Hence \eqref{tbfactors} reduces to
\[
\tilde\beta(u) = c\hskip1pt u_+^h.
\]
The exponent is $a = h$ since $\tilde\beta(u)$ is homogeneous of degree $h$.  Then \eqref{betamformula} becomes
\[
\beta_w(p) = \frac{\tilde\beta(u)}{u_+^{h}} = \frac{c\hskip1pt u_+^h}{u_+^{h}} = c.
\]
This proves the $\beta_w(p)$ is the constant polynomial.  Furthermore, \eqref{redwbtau} simplifies to
\[
\frac{w_j \beta_j(p)}{\beta_w(p)} = (w_j/c)\hskip1pt u_+^{-h_P(m_j)}\prod_{i=1}^r \ell_i(u)^{h_i(m_j)}, \quad j=1,\dots,s
\]
In the exponent matrix, $u_+$ gives the row $[-h_P(m_1),\dots,-h_P(m_s)]$.  But exponents are coefficients, so we must have $-h_P(m_1) =\dots =-h_P(m_s)$.  Since the $m_j$ affinely span $\R^d$, the formula $h_P(p) = \langle p,n_P\rangle + a_P$ must be constant, which forces $n_P = \sum_{i=1}^r n_i = 0$. 

This completes the proof of (1) $\Rightarrow$ (2), and along the way we have also proved (a) and (c).  It remains to prove (b).  The previous paragraph implies $h_P(m_j) = a_P$ for all $j$, so the above formula for the components of $\overline{w\beta}_\A\circ\tau_\A$ becomes
\begin{equation}
\label{finalsimp}
\begin{aligned}
\frac{w_j \beta_j(p)}{\beta_w(p)} &= (w_j/c)\hskip1pt u_+^{-a_P}\prod_{i=1}^r \ell_i(u)^{h_i(m_j)}\\ &= (w_j/c)(-a_P)^{a_P}\hskip1pt (-a_P\hskip1pt u_+)^{-a_P}\prod_{i=1}^r \ell_i(u)^{h_i(m_j)}.
\end{aligned}
\end{equation}
By \eqref{Hwbtau}, these are the components of $\mathcal{H}(u)$.  Then the second line of \eqref{finalsimp} shows that $H$ is indeed a Horn matrix for 
$\mathcal{H}(u)$, and since $n_P = \sum_{i=1}^r n_i = 0$, $H$ is minimal by Proposition~\ref{XMLprop}.  This completes the proof of (c).

(2) $\Rightarrow$ (1): $n_P = 0$ implies $h_P(m_j) = a_P$ for all $j$, and $\beta_w(p) = c$ and  \eqref{betamformula} imply $\tilde\beta(u) = c\hskip1pt u_+^h$.  Then \eqref{slppsi} simplifes to 
\eqref{finalsimp}.  This shows that $\overline{w\beta}_\A\circ\tau_\A$ is a Horn parametrization of $Y_{\A,w}$ for $H$ from \eqref{Hdef}, which is a minimal Horn matrix by Proposition~\ref{XMLprop}.

Corollary~\ref{HuhCor} implies that $\overline{w\beta}_\A\circ\tau_\A(u)$ is the ML estimate $L(u)$ for general $u \in \Z_{>0}^s$.  Since $\tau_\A(L(u)) = \tau_\A(u)$ by Proposition~\ref{mleprop}, we see that
\[
(\tau_\A \circ \overline{w\beta}_\A\circ\tau_\A)(u) = \tau_\A(L(u)) = \tau_\A(u)
\]
for general $u \in \Z_{>0}^s$.  These $u$'s are Zariski dense in $\C^s$, so 
\[
\tau_\A \circ \overline{w\beta}_\A\circ\tau_\A = \tau_\A
\]
as rational functions on $\C^s$.  Since $\tau_\A$ is dominating, it follows that $\tau_\A \circ \overline{w\beta}_\A$ is the identity as a rational function on $\C^d$.  Restricting to $P$, where everything is defined, we conclude that
\[
p = (\tau_\A \circ \overline{w\beta}_\A)(p) = \frac{1}{\sum_{j=1}^s w_j \hskip1pt \beta_j(p)} \sum_{j=1}^s w_j \hskip1pt \beta_j(p) \hskip1pt m_j
\]
for all $p \in P$.  This proves that $P$ has strict linear precision for the weights $w$.

Finally, we need to consider (3) when $X_P$ is smooth.  By Theorem~\ref{momentprecision}, we know that (3) holds for weights $w = (w_j)_{j=1}^s$ if and only if (1) holds for weights $\mathbf{w} = (2^{\langle m_j,n_P\rangle} w_j)_{j=1}^s$.  

If (1) holds for $w$, then so does (2), hence $n_P = 0$, so that $\mathbf{w} = w$.  Then (1) holds for $\mathbf{w}$ and consequently (3) holds for $w$.  Conversely, if (3) holds for $w$, then (1) holds for $\mathbf{w}$.  Hence (2) holds, which implies $n_P = 0$, so that $\mathbf{w} = w$.  Thus (1) holds for $w$ and we are done.
\end{proof}

Also observe that Theorem~\ref{strictequivalence} is an immediate consequence of Theorem~\ref{XML}, and
Examples~\ref{square1} and \ref{square2} give a concrete application of the theorem.

\section{Rational Linear Precision}
\label{rationalsection}
Theorem~\ref{XML} shows that strict linear precision implies ML degree one.  It is important to note that the converse is not true.  
In this section, we discuss a more general notion of linear precision and give two extended examples of what can happen. As usual, $P \subseteq \Z^d$ is a $d$-dimensional lattice polytope with lattice points $\A = P \cap\Z^d = \{m_1,\dots,m_s\}$.  

\begin{definition} (Garcia-Puente and Sottile \cite{SG})
\label{rlpdef}
$P \subseteq \Z^d$ has \emph{rational linear precision} for positive weights $w = (w_j)_{j=1}^s$ if there are rational functions $\hat\beta_1,\dots,\hat\beta_s$ on $\C^d$ satisfying:
\begin{enumerate}
\item $\sum_{i=1}^s \hat\beta_j = 1$ as rational functions on $\C^d$.
\item $\hat\beta_1,\dots,\hat\beta_s$ define a rational parametrization
\[
\hat\beta :\C^d \dashrightarrow X_{\A,w} \subseteq \PP^{s-1},\quad \hat\beta(t) = \big(\hat\beta_1(t),\dots,\hat\beta_s(t)\big).
\]
\item For every $p \in P \subseteq \C^d$, $\hat\beta_j(p)$ is defined and is a nonnegative real number.
\item $\sum_{j=1}^s \hat\beta_j(p)\hskip1pt m_j = p$ for all $p \in P$.
\end{enumerate}
\end{definition}


\begin{example}
\label{strictrational}
Any polytope with strict linear precision for weights $(w_j)_{j=1}^s$ has rational linear precision.  To see why, observe that the toric blending functions $w_j\hskip1pt \beta_j(t)/\beta_w(t)$ parametrize $X_{\A,w}$ by Proposition~\ref{XYparam}.  Since strict linear precision implies
\[
p = \frac1{\beta_w(p)}\sum_{i=1}^s w_j\hskip1pt \beta_j(p) \hskip1pt  m_j
\]
for $p \in P$, the conditions of Definition~\ref{rlpdef} are satisfied.
\end{example}

Later in this section, we will encounter two polytopes that have rational but not strict linear precision.  For now, we give a useful result about rational linear precision which combines several results of \cite{SG}.  

\begin{proposition}
\label{SGprop41}
The polytope $P$ has rational linear precision for positive weights $(w_j)_{j=1}^s$ if and only if $Y_{\A,w} \subseteq (\C^*)^s$ has ML degree one.
\end{proposition}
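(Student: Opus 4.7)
The plan is to translate both conditions into a statement about the birationality of the restriction $\tau_\A|_{Y_{\A,w}} : Y_{\A,w} \dashrightarrow \C^d$, using Corollary~\ref{mlonecor} as the bridge on the ML side and the defining equation $\sum_j \hat\beta_j(p)\hskip1pt m_j = p$ as the bridge on the rational linear precision side.

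For the forward direction, suppose $P$ has rational linear precision via $\hat\beta_1,\dots,\hat\beta_s$. Condition (1) forces the image of $\hat\beta : \C^d \dashrightarrow \PP^{s-1}$ to lie, generically, in the affine chart $\{x_1+\cdots+x_s=1\}$, so that composing with \eqref{projveryaffine} realizes $\hat\beta$ as a dominant rational map $\C^d \dashrightarrow Y_{\A,w}$. Condition (4) then reads $\tau_\A \circ \hat\beta = \mathrm{id}_{\C^d}$ on $P$, hence on $\C^d$ by Zariski density. Because $\hat\beta$ is dominant and has $\tau_\A|_{Y_{\A,w}}$ as a rational left inverse, $\tau_\A|_{Y_{\A,w}}$ is birational, and ML degree one follows from Corollary~\ref{mlonecor}.

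For the backward direction, assume $Y_{\A,w}$ has ML degree one. By Corollary~\ref{mlonecor}, $\tau_\A|_{Y_{\A,w}}$ admits a rational inverse $\varphi : \C^d \dashrightarrow Y_{\A,w}$. I would define $\hat\beta_j = \varphi_j$ and verify the four conditions of Definition~\ref{rlpdef} in order. Condition (1) is automatic because $Y_{\A,w} \subseteq (\C^*)^s$ sits inside the affine hyperplane $\{x_1+\cdots+x_s=1\}$ carved out by \eqref{projveryaffine}, so every component map of $\varphi$ has components summing to $1$ as rational functions. Condition (2) follows because $\varphi$, being the inverse of a dominant map, is itself dominant into $Y_{\A,w}$, and composing with the inclusion $Y_{\A,w} \hookrightarrow X_{\A,w}$ recovers a parametrization of $X_{\A,w}$. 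Condition (4) is immediate from $\tau_\A \circ \varphi = \mathrm{id}$.

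The main obstacle is Condition (3): showing that $\hat\beta_j(p)$ is defined and non-negative for every $p \in P$, including the boundary. On the interior $P^\circ$, I would argue as follows: given $p \in P^\circ$, choose any $u \in \oDelta{}^{s-1}$ with $\tau_\A(u) = p$ (which exists since $P^\circ$ is the image of $\tau_\A$ restricted to the open simplex). By Proposition~\ref{mleprop}, the ML estimate $L(u)$ is the unique point of $Y_{\A,w}\cap\R^s_{>0}$ with $\tau_\A(L(u)) = p$, and by uniqueness of the rational inverse on a generic fiber, $\varphi(p) = L(u) \in \R^s_{>0}$. Thus $\hat\beta_j(p) > 0$ on a Zariski-dense subset of $P^\circ$, and since $\varphi$ is rational, it is defined on an open dense subset of $P^\circ$ where it takes positive real values. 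To handle the closed polytope $P$ and its boundary, I would invoke continuity together with the fact that the components of any Horn parametrization of $Y_{\A,w}$ guaranteed by Huh's theorem (Theorem~\ref{huhthm}) are rational functions whose numerators and denominators are products of linear forms non-negative on the relevant region, letting one extend the values continuously to $P$ as non-negative reals. This boundary extension is where most of the technical work sits; once it is in place, all four conditions of Definition~\ref{rlpdef} are verified and the equivalence is complete.
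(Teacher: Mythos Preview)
Your overall architecture matches the paper's: both directions pivot on the birationality of $\tau_\A|_{Y_{\A,w}}$ via Corollary~\ref{mlonecor}, and for the backward direction you both define $\hat\beta$ as the rational inverse and check conditions (1), (2), (4) exactly as the paper does.

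The substantive divergence is in condition (3). The paper does not use the ML estimate or the Horn parametrization here at all. Instead it invokes Krasauskas's result \cite[Thm.\ 25]{krasauskas}: the toric blending functions $w_j\beta_j/\beta_w$ send $P$ onto $X_{\A,w}\cap\PP^{s-1}_{\ge0}$ and, composed with $\tau_\A$, give a homeomorphism $P\to P$. Hence $\tau_\A$ restricts to a homeomorphism $X_{\A,w}\cap\PP^{s-1}_{\ge0}\to P$, and its \emph{topological} inverse is exactly $\hat\beta$ on $P$. This gives definedness and non-negativity on all of $P$, boundary included, in one stroke.

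Your route through Proposition~\ref{mleprop} handles the interior cleanly, but the boundary argument you sketch has a genuine gap. The Horn parametrization $\mathcal{H}$ is a function of $u\in\C^s$, not of $p\in\C^d$; to turn it into a function on $P$ you would need a section of $\tau_\A$ over the boundary faces, and there is no reason the linear forms in $\mathcal{H}$ behave well under such a section (denominator factors like $u_+$ are fine, but others need not be). You correctly flag this as where the technical work sits, but the paper bypasses it entirely with the Krasauskas homeomorphism, which is the missing ingredient you want.
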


\begin{proof}
If $\hat\beta_1,\dots,\hat\beta_s$ satisfy Definition~\ref{rlpdef}, then $t \mapsto \sum_{j=1}^s\hat\beta_j(t)\hskip1pt m_j$ is birational since it is the identity on $P$.  By Proposition~\ref{mleoneprop},  $Y_{\A,w}$ has ML degree one.

For the converse, let $Y_{\A,w}$ have ML degree one.  By Corollary~\ref{mlonecor}, $\tau =\tau_{\A}\raisebox{-2.5pt}{$|$}{\raisebox{-3pt}{\scriptsize$Y_{\A,w}$}}$ is a birational map $\tau:Y_{\A,w} \dashrightarrow \C^d$.  If we write its inverse $\tau^{-1} : \C^d \dashrightarrow Y_{\A,w}$ as
\[
\tau^{-1}(t) = \hat\beta(t) = \big(\hat\beta_1(t),\dots,\hat\beta_s(t)\big) \in Y_{\A,w} \subseteq (\C^*)^s,
\]
then $\hat\beta_1,\dots,\hat\beta_s$ clearly satisfy (1) and (4) of Definition~\ref{rlpdef}.  Also, composing with the obvious map $Y _{\A,w} \to X_{\A,w} \subseteq \PP^{s-1}$, we see that $\hat\beta$ satisfies (2) of  Definition~\ref{rlpdef}. 

In proof of Proposition~\ref{XYparam}, we noted that the toric blending parametrization $\C^d \dashrightarrow X_{\A,w}$ maps $P$ to $X_{\A,w}\cap \R_{\ge0}$.  Krasauskas \cite[Thm.\ 25]{krasauskas} proved that the toric blending functions $w_j\hskip1pt\beta_j/\beta_w$ give a homeomorphism
\[
p \in P \longmapsto \sum_{j=1}^s \frac{w_j\hskip1pt\beta_j(p)}{\beta_w(p)}\hskip1pt m_j \in P.
\]
This implies that $\tau_\A$ maps  $X_{\A,w}\cap \R^s_{\ge0}$ homeomorphically onto $P$, so that its inverse
is a homeomorphism from $P$ onto  $X_{\A,w}\cap \R_{\ge0}$.  It follows that the $\hat\beta_j$ satisfy (3) of Definition~\ref{rlpdef}. 
\end{proof}

Here one situation where rational linear precision is easy to understand.

\begin{proposition}
\label{rlpformula}
Suppose that
\begin{equation}
\label{monbirational}
(\tau_\A\circ \overline{w\chi}_\A)(t) =  \frac1{\sum_{j=1}^s w_j\hskip1pt t^{m_j}} \sum_{j=1}^s w_j\hskip1pt t^{m_j}\hskip1pt m_j
\end{equation} 
defines a birational map from $\C^d\dashrightarrow \C^d$.  Then:
\begin{enumerate}
\item $P$ has rational linear precision with weights $w = (w_j)_{j=1}^s$.
\item Let $\varphi$ denote the inverse of \eqref{monbirational}.  Then 
\[
\hat\beta(t) = (\overline{w\chi}_\A\circ\varphi)(t)= \Big(\frac{w_1\hskip1pt \varphi(t)^{m_1}}{\sum_{j=1}^s w_j\hskip1pt \varphi(t)^{m_j}} ,\dots,\frac{w_s\hskip1pt \varphi(t)^{m_s}}{\sum_{j=1}^s w_j\hskip1pt \varphi(t)^{m_j}} \Big) \in Y_{\A,w}
\] 
satisfies rational linear precision from Definition~\ref{rlpdef}.
\item $Y_{\A,w}$ has ML degree one and the Horn parametrization of $Y_{\A,w}$ is given by
\[
u = (u_1,\dots,u_s)  \longmapsto \hat\beta(p), \quad p = \sum_{j=1}^s \frac{u_j\,}{u_+},\ \  u_+ = \sum_{j=1}^s u_j.
\]
\end{enumerate}
\end{proposition}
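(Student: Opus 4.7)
The plan is to match the hypothesis to Proposition~\ref{mleoneprop} with $\Psi = \overline{w\chi}_\A$, which directly yields part~(3), and then to derive parts~(1) and~(2) by invoking Proposition~\ref{SGprop41} together with uniqueness of the birational inverse of $\tau_\A|_{Y_{\A,w}}$.

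For part~(3), Proposition~\ref{mleoneprop} applies with $\Psi = \overline{w\chi}_\A$ because by hypothesis $\tau_\A \circ \Psi$ is birational. It gives at once that $Y_{\A,w}$ has ML degree one and, for general $u \in \Z_{>0}^s$ with $p = \tau_\A(u)$,
\[
L(u) = \overline{w\chi}_\A(\varphi(p)) = \hat\beta(p).
\]
Combined with Corollary~\ref{HuhCor}, this identifies $u \mapsto \hat\beta(\tau_\A(u))$ with the Horn parametrization of $Y_{\A,w}$.

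For part~(2), I would appeal to Proposition~\ref{SGprop41}: since $Y_{\A,w}$ has ML degree one, that proposition produces rational functions $\tilde\beta_1,\dots,\tilde\beta_s$ satisfying all four conditions of Definition~\ref{rlpdef}; its proof constructs them as the components of the birational inverse of $\tau_\A|_{Y_{\A,w}} : Y_{\A,w} \dashrightarrow \C^d$. The candidate $\hat\beta = \overline{w\chi}_\A \circ \varphi$ also inverts $\tau_\A|_{Y_{\A,w}}$: it is dominant into $Y_{\A,w}$ since $\overline{w\chi}_\A$ is (Proposition~\ref{XYparam}) and $\varphi$ is birational on $\C^d$, and $\tau_\A \circ \hat\beta = (\tau_\A \circ \overline{w\chi}_\A) \circ \varphi = \mathrm{id}$ by the definition of $\varphi$. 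Uniqueness of the birational inverse then forces $\hat\beta = \tilde\beta$ as rational maps to $(\C^*)^s$, so conditions~(1)--(4) of Definition~\ref{rlpdef} transfer to $\hat\beta$. Part~(1) follows immediately.

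The main obstacle is condition~(3) of Definition~\ref{rlpdef}: each $\hat\beta_j$ must be defined and take a nonnegative real value at every $p \in P$, including boundary points where poles of a rational function could in principle occur. The plan handles this indirectly by transferring via Proposition~\ref{SGprop41}, whose proof in turn invokes Krasauskas's theorem \cite[Thm.~25]{krasauskas} on the toric blending homeomorphism $P \to X_{\A,w} \cap \PP_{\ge 0}^{s-1}$. A direct verification would split into cases: for $p \in P^\circ$, use Proposition~\ref{mleprop} to identify $\hat\beta(p)$ with $L(u) \in Y_{\A,w} \cap \R_{>0}^s$ for any $u \in \oDelta^{s-1}$ with $\tau_\A(u) = p$, forcing every component to be strictly positive; for $p \in \partial P$, extend by continuity using the Krasauskas homeomorphism.
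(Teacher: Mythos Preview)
Your proposal is correct and follows essentially the same route as the paper: apply Proposition~\ref{mleoneprop} with $\Psi=\overline{w\chi}_\A$ and Corollary~\ref{HuhCor} for part~(3), deduce part~(1) from Proposition~\ref{SGprop41}, and for part~(2) identify $\hat\beta$ with the inverse $\tau^{-1}$ constructed in the proof of Proposition~\ref{SGprop41}. One small slip: the dominance of $\overline{w\chi}_\A$ comes from its definition in \eqref{XYmonomial}, not from Proposition~\ref{XYparam}, which concerns the toric blending parametrization $\overline{w\beta}_\A$.
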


\begin{proof}
The map  \eqref{monbirational} is the composition of $\tau = \tau_{\A}\raisebox{-2.5pt}{$|$}{\raisebox{-3pt}{\scriptsize$Y_{\A,w}$}}$ with the monomial parametrization of $Y_{\A,w}$ from Proposition~\ref{mleoneprop}.  This proposition implies that $Y_{\A,w}$ has ML degree one and shows that the formula of (3) is the ML estimate when $u \in \R_{>0}^s$.  By Corollary~\ref{HuhCor}, this is the Horn parametrization.  This proves (3), and then (1) follows by Proposition~\ref{SGprop41}.  

For (2), the proof of Proposition~\ref{SGprop41} shows that $\tau^{-1}$ gives a parametrization of $Y_{\A,w}$ that satisfies Definition~\ref{rlpdef}.  Then we are done since the definition of $\varphi$ easily implies that $\hat\beta = \tau^{-1}$.  
\end{proof}

In practice, Proposition~\ref{rlpformula} is a useful method for finding the parametrization that gives rational linear precision.  We will give two examples to illustrate this.  The proposition also gives the Horn parametrization.  In the case of strict linear precision, we can describe the Horn matrix in advance by Theorem~\ref{XML}.  We are not aware of any such description for rational linear precision, though our examples suggest that the Horn matrices have a very interesting structure in this case.

\subsection{The Trapezoid}
\label{subsection-trapezoid}
The trapezoid 
\[
\begin{picture}(62,50)
\put(15,15){\circle*{3.5}}
\put(35,15){\circle*{3.5}}
\put(55,15){\circle*{3.5}}
\put(15,35){\circle*{3.5}}
\put(35,35){\circle*{3.5}}
\put(18,9){\scriptsize $m_1$}
\put(38,9){\scriptsize $m_2$}
\put(58,9){\scriptsize $m_3$}
\put(18,29){\scriptsize $m_4$}
\put(38,34){\scriptsize $m_5$}
\put(15,15){\line(1,0){40}}
\put(15,15){\line(0,1){20}}
\put(35,35){\line(1,-1){20}}
\put(15,35){\line(1,0){20}}
\put(2,24.5){\scriptsize $F_1$}
\put(33,0){\scriptsize $F_2$}
\put(20,41){\scriptsize $F_4$}
\put(50,24.5){\scriptsize $F_3$}
\end{picture}
\]
has lattice points $m_1 = (0,0),\ m_2 = (1,0),\ m_3 = (2,0),\ m_4 = (0,1),\ m_5 = (1,1)$.  It is known (see \cite[Example 1.6]{GRS}) that the trapezoid has rational linear precision for the weights $w = (1, 2,1,1,1)$.  On the other hand, 
\[
n_P = n_1+n_2+n_3+n_4 = (1,0)+(0,1)+(-1,-1)+(0,-1) = (0,-1) \ne (0,0),
\]
so that the trapezoid does not have strict linear precision for any choice of weights by Theorem~\ref{XML}.

For $(s,t) \in (\C^*)^2$, we define 
\[
S(s,t) = 1 + 2s + s^2 + t + st = (1+s)(1+s+t).
\]
Then the monomial parametrization of $Y_{\A,w,}$ is given by
\begin{equation}
\label{trap1}
\overline{w\chi}_\A(s,t) = \frac1{S(s,t)}\big(1, 2s, s^2, t, st\big).
\end{equation}
Composing this with $\tau_\A : Y_{\A,w} \dashrightarrow \C^2$, we obtain 
\begin{equation}
\label{trap1.5}
\begin{aligned}
(\tau_\A\circ \overline{w\chi}_\A)(s,t) &= 
\frac1{S(s,t)}\big(1(0,0)+ 2s(1,0) + s^2(2,0) + t(0,1) + st(1,1)\big)\\
&= \Big(\frac{s(2 + 2 s + t)}{(1 + s) (1 + s + t)}, \frac{t}{1 + s + t}\Big).
\end{aligned}
\end{equation}
This map is birational with inverse
\[
\varphi(s,t) = \Big(\frac{s}{2 - s - t},\frac{t(2-t)}{(1-t)(2-s-t)}\Big),
\]
Proposition \ref{rlpformula} implies rational linear precision. 
When we substitute this into \eqref{trap1}, we get the parametrization that sends $p = (s,t)$ to 
\begin{equation}
\label{trap2}
\begin{aligned}
&\Big(\frac{(1-t)(2-s-t)^2}{(2-t)^2},\frac{2s(1-t)(2-s-t)}{(2-t)^2},\frac{s^2(1-t)}{(2-t)^2},\frac{t(2-s-t)}{2-t},\frac{st}{2-t}\Big) =\\
&\Big(\frac{h_3(p)^2h_4(p)}{(2-t)^2},\frac{2h_1(p)h_3(p)h_4(p)}{(2-t)^2},\frac{h_1(p)^2 h_4(p)}{(2-t)^2},\frac{h_2(p)h_3(p)}{2-t},\frac{h_1(p)h_2(p)}{2-t}\Big)
\end{aligned}
\end{equation}
where $h_1(p) = s,\ h_2(p) = t,\ h_3(p) = 2-s-t,\ h_4(p) = 1-t$. By Proposition~\ref{rlpformula}, this parametrization has rational linear precision.

In contrast, the toric blending parametrization $\overline{w\beta}_\A$ of $Y_{\A,w}$ sends $p = (s,t)$ to
\begin{equation}
\label{trap3}
\Big(\frac{h_3(p)^2h_4(p)}{\beta_w(p)}, \frac{2h_1(p)h_3(p)h_4(p)}{\beta_w(p)}, \frac{h_1(p)^2 h_4(p)}{\beta_w(p)}, \frac{h_2(p)h_3(p)}{\beta_w(p)}, \frac{h_1(p)h_2(p)}{\beta_w(p)}\Big),
\end{equation}
where $\beta_w(p) = \beta_w(s,t) = (2-t)(2-2t-t^2)$.  Comparing this to \eqref{trap2}, we see that the numerators are the same, but the denominators differ.  

We next use \eqref{trap2} to compute the Horn parametrization.  Given $u = (u_1,\dots,u_5)$ and $1 = u_1 + \dotsm + u_5$, we define $p$ as usual via
\[
p = \sum_{j=1}^5 u_j \hskip1pt m_j = \Big({u_2+2u_3+u_5},{u_4+u_5}\Big).
\]
By Proposition~\ref{rlpformula}, we get the Horn parametrization of $Y_{\A,w}$ by evaluating \eqref{trap2} at the point $p$ just defined.  After some algebra, the result is
\[
\big(L_1(u), L_2(u), L_3(u), L_4(u), L_5(u)\big),
\]
where
\begin{equation}
\label{trapML}
\begin{aligned}
L_1(u) &= \frac{  (2 u_1 + u_2 + u_4)^2(u_1 + u_2 + u_3)}{(u_1 + u_2 + u_3 + u_4 + u_5)(2 u_1 + 2 u_2 + 2 u_3 + u_4 + u_5)^2 }\\
L_2(u) &= 2\frac{ (u_2 + 2 u_3 + u_5) (2 u_1 + u_2 + u_4)(u_1 + u_2 + u_3)}{ (u_1 + u_2 + u_3 + u_4 + u_5)(2 u_1 + 2 u_2 + 2 u_3 + u_4 + u_5)^2}\\
L_3(u) &= \frac{(u_2 + 2 u_3 + u_5)^2 (u_1 + u_2 + u_3)}{(u_1 + u_2 + u_3 + u_4 + u_5)(2 u_1 + 2 u_2 + 2 u_3 + u_4 + u_5)^2}\\
L_4(u) &= \frac{(u_4 + u_5)(2 u_1 + u_2 + u_4)}{(u_1 + u_2 + u_3 + u_4 + u_5)(2 u_1 + 2 u_2 + 2 u_3 + u_4 + u_5)}\\
L_5(u) &= \frac{(u_2 + 2 u_3 + u_5)(u_4 + u_5)}{(u_1 + u_2 + u_3 + u_4 + u_5)(2 u_1 + 2 u_2 + 2 u_3 + u_4 + u_5)}.
\end{aligned}
\end{equation}
The reader can check that the linear forms in these formulas come from the minimal Horn matrix
\begin{equation}
\label{trapHorn}
H = 
\kbordermatrix{& m_1 & m_2 & m_3 & m_4 & m_5\cr
F_1 &\phantom{-}0 & \phantom{-} 1 & \phantom{-} 2 & \phantom{-} 0 & \phantom{-} 1\cr 
F_2 &\phantom{-} 0 & \phantom{-} 0 & \phantom{-} 0 & \phantom{-} 1 &  \phantom{-} 1\cr
F_3 &\phantom{-} 2 & \phantom{-} 1 & \phantom{-} 0 & \phantom{-} 1 &  \phantom{-} 0\cr
F_4 & \phantom{-} 1 & \phantom{-} 1 & \phantom{-} 1 & \phantom{-} 0 &  \phantom{-} 0\cr
?_1 &{-}1 &{-}1 & {-}1 & {-}1 & {-}1\cr
?_2 &{-}2 &{-}2 & {-}2 & {-}1 & {-}1}.
\end{equation}
The first four rows of $H$ fit the pattern observed in Theorem~\ref{XML}:\ they record the lattice distances from the lattice points to the facets $F_1,F_2,F_3,F_4$.  The last two rows of $H$, marked $?_1$ and $?_2$, are more mysterious.  Here is one possible explanation:

\begin{itemize}
\item Row $?_1$ is $-(\text{row }F_2 + \text{row }F_4)$, and row $?_2$ is $-(\text{row }F_1 + \text{row }F_3)$.
\item The normal fan of the trapezoid has two primitive collections, $\{n_2,n_4\}$ and $\{n_1,n_3\}$, where $n_i$ is the inward normal corresponding to $F_i$.
\end{itemize}
So primitive collections explain rows $?_1$ and $?_2$ in this example.  See \cite[\S6.4]{cls} for more on primitive collections.

\subsection{Moment Maps of the Trapezoid}
This paper began with the moment maps $\mu_\text{quot}$ and $\mu_{\text{FS},w}$.  Theorem~\ref{XML} tells us what happens in the presence of strict linear precision.  So what about the trapezoid, which in a sense is the next simplest case?  Recall that we are using the weights $w = (1,2,1,1,1)$.

In Proposition~\ref{wtstructure}, we learned that
\[
\mu_{\text{FS},w} = \tau_\A\circ \overline{w\chi}_\A\circ |{\bullet}|^2.
\]
We computed $\tau_\A\circ \overline{w\chi}_\A$ in \eqref{trap1.5}, so that we get a totally explicit formula for $\mu_{\text{FS},w}$ as a rational expression in $|x|^2$.  

For the weights $w = (1,2,1,1,1)$, Proposition~\ref{Km=tx-proposition} and  $n_P = (0,-1)$  imply that
\[
K_w \circ \mu_\text{quot} = \mu_{\text{FS},w'}
\]
for $w' = (1,2,1,2,2)$.  We know a formula for $\mu_{\text{FS},w'}$ from Proposition~\ref{wtstructure}, and in Remark~\ref{Kwwbtau}, we noted that $K_w = \tau_\A\circ \overline{w\beta}_\A$.  We computed $\overline{w\beta}_\A$ in \eqref{trap3}, which makes it easy to show that 
\begin{equation}
\label{trap4}
K_w(s,t) = (\tau_\A\circ \overline{w\beta}_\A)(s,t) = 
\Big(\frac{s(4-5t+2t^2)}{(2-t)(2-2t+t^2)},\frac{t}{2-2t+t^2}\Big).
\end{equation}
In contrast to $\tau_\A\circ \overline{w\chi}_\A$, this map is \emph{not} birational -- it has degree $2$.  Hence
\[
\mu_\text{quot} = K_w^{-1} \circ \mu_{\text{FS},w'}
\]
is an \emph{algebraic function} of $|x|^2$ (square roots appear).  There appears to be no nice formula for the quotient moment map in the presence of rational linear precision.  

\begin{remark}
By Definition~\ref{slpdef}, strict linear precision holds when  $K_w$ is the identity on $P$.  This implies that $\tau_\A\circ \overline{w\beta}_\A$ is the identity as a rational map.  But for the trapezoid, we see that $K_w= \tau_\A\circ \overline{w\beta}_\A$ is far from the identity since it has degree $2$.

The upshot is for the trapezoid, having rational linear precision does not help us understand the quotient moment map.  Strict linear precision is somehow the sweet spot where everything comes together.
\end{remark}

\subsection{A Decomposable Graphical Model} In statistics, the simplest nontrivial decomposable graphical model is 
\begin{equation}
\label{graph}
\begin{array}{c}\begin{picture}(70,20)
\thicklines
\put(2,12){\small 1}
\put(32,12){\small 2}
\put(62,12){\small 3}
\put(5,5){\line(1,0){60}}
\put(5,5){\circle*{4.5}}
\put(35,5){\circle*{4.5}}
\put(65,5){\circle*{4.5}}
\end{picture}
\end{array}
\end{equation}
As explained in \cite{GMS}, we have coordinates $p_{000},\dots,p_{111}$ of $\PP^7$, and the graph \eqref{graph} gives the matrix of columns of the  $t$-monomial parametrization exponents:
\begin{equation}
\label{graphmatrix}
\kbordermatrix{~ & 
p_{000} & p_{001} & p_{010} & p_{011} &  p_{100} & p_{101} & p_{110} & p_{111}\cr
t_1  & 1 & 1 & 0 & 0 & 0 & 0 &0 & 0 \cr
t_2  & 0 & 0 & 1 & 1 & 0 & 0 & 0 & 0\cr
t_3  & 0 & 0 & 0 & 0 & 1 & 1 & 0 & 0\cr
t_4  & 0 & 0 & 0 & 0 & 0 & 0 & 1 & 1\cr
t_5  & 1 & 0 & 0 & 0 & 1 & 0 &0 & 0 \cr
t_6  & 0 & 1 & 0 & 0 & 0 & 1 & 0 & 0\cr
t_7  & 0 & 0 & 1 & 0 & 0 & 0 & 1 & 0\cr
t_8  & 0 & 0 & 0 & 1 & 0 & 0 & 0 & 1}.
\end{equation}
The passage from the graph \eqref{graph} to this matrix is discussed in \cite{GMS}.

Using weights $w = (1,\dots,1)$, the columns $m_1,\dots,m_8$ of \eqref{graphmatrix} are exponent vectors that give the parametrization
\[
t = (t_1,\dots,t_8) \longmapsto (t^{m_1},\dots,t^{m_8}) = [t_1t_5, t_1t_6, t_2t_7,t_2t_8, t_3t_5, t_3t_6, t_4t_7, t_4t_8] \in \PP^7.
\]
As noted in \cite{GMS}, the image of this map is the toric variety defined by
\begin{align*}
&p_{001} p_{100} = (t_1t_6)(t_3t_5) = (t_1t_5)(t_3t_6)  = p_{000}p_{101}\\
&p_{011} p_{110} = (t_2t_8)(t_4t_7) = (t_2t_7)(t_4t_8)  = p_{010}p_{111},
\end{align*}
corresponding to $m_2+m_5 = m_1+m_6$ and $m_4+m_7 = m_3+m_8$.   Since the toric variety lives in $\PP^7$ and is defined by two equations, it has dimension $5$.

It is well known that decomposable graphical models have ML degree one \cite{lauritzen}.  In the case of  \eqref{graph}, we will show later in the section that the estimate for $(u_1,\dots,u_8)$ is given by
{\small\begin{align*}
\Big(&\frac{(u_1+u_2)(u_1+u_5)}{(u_1\hskip-.5pt +\hskip-.5pt u_2\hskip-.5pt +\hskip-.5pt u_5\hskip-.5pt +\hskip-.5pt u_6)}, \hskip-1pt\frac{(u_1+u_2)(u_2+u_6)}{(u_1\hskip-.5pt +\hskip-.5pt u_2\hskip-.5pt +\hskip-.5pt u_5\hskip-.5pt +\hskip-.5pt u_6)}, \hskip-1pt
\frac{(u_3+u_4)(u_3+u_7)}{(u_3\hskip-.5pt +\hskip-.5pt u_4\hskip-.5pt +\hskip-.5pt u_7\hskip-.5pt +\hskip-.5pt u_8)}, \hskip-1pt
\frac{(u_3+u_4)(u_4+u_8)}{(u_3\hskip-.5pt +\hskip-.5pt u_4\hskip-.5pt +\hskip-.5pt u_7\hskip-.5pt +\hskip-.5pt u_8)},\\
&\frac{(u_1+u_5)(u_5+u_6)}{(u_1\hskip-.5pt +\hskip-.5pt u_2\hskip-.5pt +\hskip-.5pt u_5\hskip-.5pt +\hskip-.5pt u_6)}, \hskip-1pt
\frac{(u_2+u_6)(u_5+u_6)}{(u_1\hskip-.5pt +\hskip-.5pt u_2\hskip-.5pt +\hskip-.5pt u_5\hskip-.5pt +\hskip-.5pt u_6)}, \hskip-1pt
\frac{(u_3+u_7)(u_7+u_8)}{(u_3\hskip-.5pt +\hskip-.5pt u_4\hskip-.5pt +\hskip-.5pt u_7\hskip-.5pt +\hskip-.5pt u_8)}, \hskip-1pt
\frac{(u_4+u_8)(u_7+u_8)}{(u_3\hskip-.5pt +\hskip-.5pt u_4\hskip-.5pt +\hskip-.5pt u_7\hskip-.5pt +\hskip-.5pt u_8)}\hskip-.5pt \Big)
\end{align*}}
\hskip-3pt when $1 = u_1+\cdots+u_8$.  From this, it is easy to see that the minimal Horn matrix for this graphical model given by the $11 \times 8$ matrix
\begin{equation}
\label{graphHorn}
H = \kbordermatrix{~ & 
m_1 & m_2 & m_3 & m_4 &  m_5 & m_6 & m_7 & m_8\cr
F_1 &\ph 1 &\ph 1 &\ph 0 &\ph 0 &\ph 0 &\ph 0 &\ph 0 &\ph 0\cr
F_2 &\ph 0 &\ph 0 &\ph 1 &\ph 1 &\ph 0 &\ph 0 &\ph 0 &\ph 0\cr
F_3 &\ph 0 &\ph 0 &\ph 0 &\ph 0 &\ph 1 &\ph 1 &\ph 0 &\ph 0\cr
F_4 &\ph 0 &\ph 0 &\ph 0 &\ph 0 &\ph 0 &\ph 0 &\ph 1 &\ph 1\cr
F_5 &\ph 1 &\ph 0 &\ph 0 &\ph 0 &\ph 1 &\ph 0 &\ph 0 &\ph 0\cr
F_6 &\ph 0 &\ph 1 &\ph 0 &\ph 0 &\ph 0 &\ph 1 &\ph 0 &\ph 0\cr
F_7 &\ph 0 &\ph 0 &\ph 1 &\ph 0 &\ph 0 &\ph 0 &\ph 1 &\ph 0\cr
F_8 &\ph 0 &\ph 0 &\ph 0 &\ph 1 &\ph 0 &\ph 0 &\ph 0 &\ph 1\cr
?_1 & -1 & -1 & -1 & -1 & -1 & -1 & -1 & -1\cr
?_2 & -1 & -1 &\ph 0 &\ph 0 & -1 & -1 &\ph 0 &\ph 0\cr
?_3 &\ph 0 &\ph 0 & -1 & -1 &\ph 0 &\ph 0 & -1 & -1}.
\end{equation}
For time being, ignore the row labels.  Right now, the interesting thing to note is that the first eight rows (the nonnegative ones) are precisely the matrix \eqref{graphmatrix}.

To see how this example fits into our framework, we need to understand the $5$-dimensional polytope
\[
\mathrm{Conv}(m_1,\dots,m_8) \subseteq \R^8
\]
(remember that the toric variety has dimension $5$).  The polytopes arising from decomposable graphical models have been studied in \cite{seth}, where they are shown to be compressed  \cite[Cor.\ 5.7]{seth}.  

We will use the following model of $P$:
\[
P = \mathrm{Conv}(0, e_1,e_2,e_1+e_2, e_5, e_3+e_5, e_4+e_5, e_3+e_4+e_5) \subseteq \R^5.
\]
Write points in $\R^5$ as $p = (t_1,\dots,t_5)$.  Then $\{t_5 = 0\}$ and $\{t_5 = 1\}$ are supporting hyperplanes of $P$ that cut out faces
\begin{align*}
Q_1 &= P\cap \{t_5=0\} = \mathrm{Conv}(0, e_1,e_2,e_1+e_2)\\
Q_2 &= P\cap \{t_5=1\} =  \mathrm{Conv}(e_5, e_3+e_5, e_4+e_5, e_3+e_4+e_5),
\end{align*}
each of which is a copy of the unit square.  Here are some properties of this polytope.

\begin{proposition}
\label{Pproperties}
\
\begin{enumerate}
\item $P$ is the join $Q_1 \star Q_2$.
\item $P$ has eight lattice points 
\begin{align*}
&m_1 = 0,\ m_2 = e_1,\ m_5 = e_2,\ m_6 = e_1+e_2,\\
&m_3 = e_5,\ m_4 = e_3+e_5,\ m_7 = e_4+e_5,\ m_8 = e_3+e_4+e_5.
\end{align*}
\item $P$ has eight facets $F_1,\dots,F_8$ with facet presentation
\[
P = \{ p \in \R^5 \mid h_i(p) \ge 0,\ i = 1,\dots,8\},
\]
where $h_i(p) = \langle p,n_i\rangle + a_i$ for facet normals
\begin{align*}
&n_1 = -e_2-e_5,\ n_2 = -e_4+e_5,\ n_3=e_2,\ n_4=e_4,\\
&n_5=-e_1-e_5,\ n_6 = e_1,\ n_7=-e_3+e_5,\ n_8 =e_3.
\end{align*}
\item The $8 \times 8$ matrix $(h_i(m_j))$ equals \eqref{graphmatrix} and consists of the first eight rows of \eqref{graphHorn}.
\item The normal fan of $P$ is not simplicial.
\item $P$ has four primitive collections
\[
\{n_1,n_2, n_3,n_4\},\ \{n_5,n_6, n_7,n_8\},\ \{n_1,n_3, n_7,n_8\},\ \{n_2,n_4, n_5,n_6\}.
\]
\end{enumerate}
\end{proposition}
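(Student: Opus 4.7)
The plan is to verify (1)--(6) in order, with the join decomposition established in (1) doing most of the heavy lifting for (5) and (6).

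\textbf{Parts (1) and (2).} I would observe that $Q_1 \subseteq \{t_5 = 0\}$ and $Q_2 \subseteq \{t_5 = 1\}$ sit on parallel hyperplanes and, moreover, their vertex coordinates use disjoint supports ($Q_1$ in the $t_1, t_2$-plane, $Q_2 - e_5$ in the $t_3, t_4$-plane). Since $P$ is defined as the convex hull of the eight vertices of $Q_1 \sqcup Q_2$, this exhibits $P$ as the join $Q_1 \star Q_2$. For (2), the hyperplanes $t_5 = 0$ and $t_5 = 1$ are supporting for $P$, so any lattice point of $P$ must satisfy $t_5 \in \{0,1\}$; it therefore lies in $Q_1$ or $Q_2$, each a unit square with exactly four vertex lattice points.

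\textbf{Parts (3) and (4).} I would determine $a_1 = a_5 = 1$ and $a_i = 0$ for $i \in \{2,3,4,6,7,8\}$ from the requirement that each $h_i = \langle \cdot, n_i\rangle + a_i$ vanish on the corresponding facet, then compute the $8 \times 8$ matrix $\bigl(h_i(m_j)\bigr)$ by direct substitution. The result is the nonnegative matrix \eqref{graphmatrix}, confirming $h_i \ge 0$ on every vertex (hence on $P$) and in particular that $\{m_1,\dots,m_8\} \subseteq P$. Each row of $(h_i(m_j))$ has exactly six zero entries, so each $\{h_i = 0\}$ meets $P$ in a $4$-dimensional face, giving a genuine facet; since the eight half-spaces already bound a set containing every $m_j$, they cut out exactly $P$, and no further facets exist.

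\textbf{Part (5).} The maximal cone of the normal fan at a vertex $v$ is generated by the $n_i$ with $v \in F_i$. Inspecting column $1$ of $(h_i(m_j))$ shows $m_1$ lies on six facets ($F_2, F_3, F_4, F_6, F_7, F_8$), so the cone at $m_1$ has six ray generators in $\R^5$. A simplicial cone in $\R^5$ has exactly five rays, so the normal fan is not simplicial.

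\textbf{Part (6).} This is the main work, and the hardest step is the exhaustiveness claim. First, a direct check shows $\sum_{n \in C} n = 0$ for each of the four listed sets $C$, so no strongly convex cone contains all rays of $C$; and for each three-element $S \subsetneq C$, I would exhibit a column of $(h_i(m_j))$ whose entries in rows $S$ all vanish, producing a lattice point in $\bigcap_{i \in S} F_i$. This shows each listed $C$ is a primitive collection. For exhaustiveness, I would exploit the join from (1): the eight facets split into the $Q_1$-group (facets $\phi \star Q_2$ for $\phi$ an edge of $Q_1$) and the $Q_2$-group (symmetric). For $S = S_1 \cup S_2$, the join structure of faces of $P$ gives
\[
\bigcap_{F \in S} F \;=\; \bigl(\textstyle\bigcap_{F \in S_1}(F \cap Q_1)\bigr) \star \bigl(\textstyle\bigcap_{F \in S_2}(F \cap Q_2)\bigr),
\]
so the left-hand side is empty precisely when both factors on the right are empty. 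Each square has exactly two pairs of opposite edges, and an intersection of edges of a square is empty iff it contains one such opposite pair. Consequently a minimal $S$ with empty intersection consists of exactly one pair of opposite edges from each factor, giving $2 \times 2 = 4$ primitive collections, which match the four listed.
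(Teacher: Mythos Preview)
Your argument is correct. Parts (1)--(5) match the paper's proof almost exactly: the paper likewise uses the join description, the supporting hyperplanes $t_5=0$ and $t_5=1$, Kalai's theorem to count facets (then omits verification of the explicit $h_i$), and the observation that each vertex lies on six facets so the maximal cones have six rays in $\R^5$.

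Part (6) is where you genuinely diverge. The paper lists the eight maximal cones $\sigma_j$ explicitly from the columns of $(h_i(m_j))$, then argues that any primitive collection $\mathcal{P}$ must meet the complement of every $\sigma_j$; unwinding these eight conditions forces $\{n_1,n_3\}\subseteq\mathcal{P}$ or $\{n_5,n_6\}\subseteq\mathcal{P}$, and $\{n_2,n_4\}\subseteq\mathcal{P}$ or $\{n_7,n_8\}\subseteq\mathcal{P}$, from which the four collections follow. Your approach instead exploits the join systematically: since the face lattice of $Q_1\star Q_2$ is the product of the face lattices, intersections of facets factor through the two squares, and $A\star B=\emptyset$ iff $A=\emptyset$ and $B=\emptyset$. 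This reduces the search for minimal empty facet-intersections to ``one opposite-edge pair from each square,'' yielding the $2\times2=4$ collections directly. Your route is cleaner and visibly generalises to joins of other polytopes; the paper's route, while more ad hoc, has the side benefit of producing the ``internal structure'' \eqref{graphPC} explicitly, which the paper then reuses to interpret the last rows of the Horn matrix.
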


\begin{remark} 
The labels $m_1,\dots,m_8$ and $F_1,\dots,F_8$ of the lattice points and facets was chosen to ensure that $(h_i(m_j))$ was equal to \eqref{graphmatrix}; other labelings would have produced a permutation of the rows and columns of  \eqref{graphmatrix}.
\end{remark}

\begin{proof}
The join $Q_1\star Q_2$ is the union of all line segments connecting a point of $Q_1$ to a point of $Q_2$, where this is done so that distinct line segments have disjoint interiors. If we write $\R^5 = \mathrm{Span}(e_1,e_2) \times \mathrm{Span}(e_3,e_4) \times \R = V \times W \times \R$, then $P$ is the convex hull of $Q_1 \subseteq V \times \{0\} \times \{0\}$ and $Q_2 \subseteq \{0\} \times W \times \{1\}$.  This makes it easy to see that $P = Q_1\star Q_2$.

The polytope $P$ lies between the hyperplanes $t_5 = 0$ and $t_5 = 1$, so that its lattice points lie in the faces $Q_1$ and $Q_2$ where these hyperplanes meet $P$.  Since $Q_1$ and $Q_2$ are unit squares, their lattice points are their vertices.  Hence we get the eight lattice points listed in the statement of the proposition.  Note that the lattice points of $P$ are precisely its vertices.

A theorem of Kalai \cite{kalai} implies that the face lattice of $P= Q_1\star Q_2$ is the cartesian product of the face lattices of $Q_1$ and $Q_2$.  In particular, every facet of $P$ is of the form 
\begin{equation}
\label{facetsP}
E_1 \star Q_2,\ E_1 \text{ an edge of } Q_1,\quad
Q_1 \star E_2,\ E_2 \text{ an edge of } Q_2, 
\end{equation}
since the facets of $Q_1$ and $Q_2$ are edges.  It follows that there are eight facets.  We omit the straightforward proof of the facet presentation of $P$, though for later purposes, we note that  $h_i(p) = \langle p,n_i\rangle + a_i$ is given by 
\begin{equation}
\label{graphicalni}
\begin{array}{ll}
h_1(p) = 1-t_2-t_5,\, n_1 = -e_2-e_5 & \!h_5(p) = 1-t_1-t_5,\, n_5 = -e_1-e_5\\
h_2(p) = -t_4+t_5,\, n_2 = -e_4+e_5 & \!h_6(p) = t_1,\, n_6 = e_1\\
h_3(p) = t_2,\, n_3 = e_2 & \!h_7(p) = -t_3+t_5,\, n_7 = -e_3+e_5\\
h_4(p) = t_4,\, n_4 = e_4 & \!h_8(p) = t_3,\, n_8 = e_3.
\end{array}
\end{equation}
With this labeling, one computes that $(h_i(m_j))$ is the matrix \eqref{graphmatrix}.

To find the primitive collections of $P$, we need to understand the maximal cones of the normal fan.  Let $\sigma_j$ be the cone of the vertex $m_j$, which is generated by the facet normals of the facets $F_i$ containing $m_j$.  Looking at the matrix $(h_i(m_j))$, we see that
\begin{align*}
&m_1 \text{  is contained in all but } F_1,F_5, \text { so } \sigma_1 = \mathrm{Cone}(n_i \mid i \ne 1,5) \\
&m_2 \text{  is contained in all but } F_1,F_6, \text { so } \sigma_2 = \mathrm{Cone}(n_i \mid i \ne 1,6) \\
&m_3 \text{  is contained in all but } F_2,F_7, \text { so } \sigma_3 = \mathrm{Cone}(n_i \mid i \ne 2,7) \\
&m_4 \text{  is contained in all but } F_2,F_8, \text { so } \sigma_4 = \mathrm{Cone}(n_i \mid i \ne 2,8) \\
&m_5 \text{  is contained in all but } F_3,F_5, \text { so } \sigma_5 = \mathrm{Cone}(n_i \mid i \ne 3,5) \\
&m_6 \text{  is contained in all but } F_3,F_6, \text { so } \sigma_6 = \mathrm{Cone}(n_i \mid i \ne 3,6) \\
&m_7 \text{  is contained in all but } F_4,F_7, \text { so } \sigma_7 = \mathrm{Cone}(n_i \mid i \ne 4,7) \\
&m_8 \text{  is contained in all but } F_4,F_8, \text { so } \sigma_8 = \mathrm{Cone}(n_i \mid i \ne 4,8).
\end{align*} 
Using this, one can verify that the four quadruples listed in the  proposition are primitive collections. The cones $\sigma_j$ are $5$-dimensional, each with six minimal generators.  This proves that the normal fan of $P$ is not simplicial.

Now let $\mathcal{P}$ be a primitive collection, which we think of as a subset of $\{n_1,\dots,n_8\}$.   Then $\mathcal{P} \not\subseteq \sigma_j$ for all $j$, so that
\[
\begin{aligned}
&n_1 \text{ or } n_5 \in \mathcal{P},\ n_1 \text{ or } n_6 \in \mathcal{P},\ n_2 \text{ or } n_7 \in \mathcal{P},\ n_2 \text{ or } n_8 \in \mathcal{P},\\
&n_3 \text{ or } n_5 \in \mathcal{P},\ n_3 \text{ or } n_6 \in \mathcal{P},\ n_4 \text{ or } n_7 \in \mathcal{P},\ n_4 \text{ or } n_8 \in \mathcal{P}.
\end{aligned} 
\]
This easily implies that
\begin{equation}
\label{graphPC}
\{n_1,n_3\} \subseteq  \mathcal{P}  \text{ or } \{n_5,n_6\} \subseteq \mathcal{P} \ \ \text{and}\ \ \{n_2,n_4\} \subseteq  \mathcal{P}  \text{ or } \{n_7,n_8\} \subseteq \mathcal{P}. 
\end{equation}
{}From here, it is straightforward to show that the four primitive collections given above are in fact all of the primitive collections.
\end{proof}

We have three remaining tasks in our discussion of the graphical model:
\begin{itemize}
\item Explain why $P$ does not have strict linear precision. 
\item Show that $P$ has rational linear precision and find its Horn parametrization.
\item Explain the negative rows in the Horn matrix \eqref{graphHorn}.
\end{itemize} 

There are two ways to see that $P$ does not have strict linear precision with weights $w = (1,\dots,1)$.  First, the minimal Horn matrix \eqref{graphHorn} is not of the form \eqref{Hdef}, so $P$ cannot have strict linear precision by Theorem~\ref{XML}.  Second, Theorem~\ref{XML} tells us that strict linear precision is equivalent to $n_P=0$ and $\beta_w(p) = \text{constant}$.  The former is true---this follows easily from \eqref{graphicalni}---but the latter fails since for $p = (t_1,t_2,t_3,t_4,t_5)$, the formulas of \eqref{graphicalni} imply that
\begin{equation}
\label{graphbeta}
\begin{aligned}
\beta_w(p) =\ &h_1(p)h_5(p) + h_1(p)h_6(p) +h_2(p)h_7(p) +h_2(p)h_8(p)\, +\\ &h_3(p)h_5(p) +h_3(p)h_6(p) +h_4(p)h_7(p) +h_4(p)h_8(p)\\
=\ &2t_5^2-2t_5+1.
\end{aligned}
\end{equation}

To prove rational linear precision with weights $w = (1,\dots,1)$, we use the monomials $t^{m_j}$ coming from the lattice points $m_1,\dots,m_8$ listed in Propostion~\ref{Pproperties}.  The sum of these monomials is
\[
S(p) = \sum_{j=1}^8 t^{m_j} = 1+t_1+t_5+t_3t_5+t_2+t_1t_2+t_4t_5+t_3t_4t_5,
\]
so that for $p = (t_1,t_2,t_3,t_4,t_5)$, the monomial parametrization $\overline{w\chi}_\A$ of $Y_{\A,w}$ is given by
\begin{equation}
\label{graphparam}
\overline{w\chi}_\A(p) = \frac1{S(p)}\big(1,t_1,t_5,t_3t_5,t_2,t_1t_2,t_4t_5,t_3t_4t_5\big).
\end{equation}
Composing this with $\tau_\A: Y_{\A,w} \to \C^5$ gives 
\[
(\tau_\A\circ \overline{w\chi}_\A)(p) = \sum_{j=1}^s \frac{p^{m_j}}{S(p)}\hskip1pt m_j \in \C^5,
\]
which is birational with inverse
\[
\varphi(p) = \Big(\frac{h_6(p)}{h_5(p)}, \frac{h_3(p)}{h_1(p)},\frac{h_8(p)}{h_7(p)},\frac{h_4(p)}{h_2(p)},\frac{(1-t_5)h_2(p)h_7(p)}{t_5h_1(p)h_5(p)}\Big).
\]
By Proposition~\ref{rlpformula}, $P$ rational linear precision for weights $(1,\dots,1)$.  Furthermore, composing the inverse $\varphi$ with \eqref{graphparam} gives the parametrization 
\begin{equation}
\label{graphlpparam}
(\overline{w\chi}_\A \circ\varphi)(p) = \Big( \frac{h_1 h_5}{1-t_5}, \frac{h_1 h_6}{1-t_5}, \frac{h_2 h_7}{t_5}, \frac{h_2 h_8}{t_5},\frac{h_3 h_5}{1-t_5},\frac{h_3 h_6}{1-t_5},\frac{h_4 h_7}{t_5},\frac{h_4 h_8}{t_5}\Big)
\end{equation}
with rational linear precision.  Here, for simplicity we write $h_i$ instead of $h_i(p)$.  In constrast, the toric blending parametrization of $Y_{\A,w}$ is 
\begin{equation}
\label{graphkras}
\overline{w\beta}_\A(p) = \Big( \frac{h_1 h_5}{\beta_w}, \frac{h_1 h_6}{
\beta_w}, \frac{h_2 h_7}{\beta_w}, \frac{h_2 h_8}{\beta_w},\frac{h_3 h_5}{\beta_w},\frac{h_3 h_6}{\beta_w},\frac{h_4 h_7}{\beta_w},\frac{h_4 h_8}{\beta_w}\Big),
\end{equation}
where $\beta_w = 2t_5^2-2t_5+1$ by \eqref{graphbeta}.
Comparing \eqref{graphlpparam} and \eqref{graphkras}, we see that the denominators are the same, while the numerators differ.
For \eqref{graphlpparam}, notice that the denominators $t_5$ and $1-t_5$ can be explained as follows:
\begin{itemize}
\item $t_5$ occurs in the denominator in positions $3,4,7,8$, corresponding to the fact that $m_3,m_4,m_7,m_8$ have lattice distance $1$ to the hyperplane $t_5 = 0$.
\item $1-t_5$ occurs in the denominator in positions $1,2,5,6$, corresponding to the fact that $m_1,m_2,m_5,m_6$ have lattice distance $1$ to the hyperplane $1-t_5 = 0$.
\end{itemize}

Furthermore, if we substitute $p = \sum_{j=1}^8 u_j\hskip1pt m_j$ into \eqref{graphlpparam}, Proposition~\ref{rlpformula} gives the Horn parametrization from earlier in our discussion of the graphical model.

Finally, we need to see if the negative rows in the Horn matrix \eqref{graphHorn} have anything to do with the four primitive collections identified in Proposition~\ref{Pproperties}.  One of the primitive collections is $\{n_1,n_2,n_3,n_4\}$, and one can check that
\[
-(\text{row }1+\text{row }2+\text{row }3+\text{row }4) = (-1,-1,-1,-1,-1,-1,-1,-1).
\]
which is the row marked $?_1$ in  \eqref{graphHorn}.  However, all four primitive collections from Proposition~\ref{Pproperties} have this property.  So the primitive collections explain row $?_1$ but tell us nothing about rows $?_2$ and $?_3$.

This is not surprising, since the toric variety corresponding to $P$ is not simplicial by Proposition~\ref{Pproperties}.  Most results about primitive collections apply to simplicial toric varieties \cite[\S6.4]{cls}, though some results hold more generally \cite{CvR}.  

However, in the proof of Proposition~\ref{Pproperties}, we learned in \eqref{graphPC} that any primitive collection $\mathcal{P} \subseteq \{n_1,\dots,n_8\}$ satisfies
\[
\{n_1,n_3\} \subseteq  \mathcal{P}  \text{ or } \{n_5,n_6\} \subseteq \mathcal{P} \ \ \text{and}\ \ \{n_2,n_4\} \subseteq  \mathcal{P}  \text{ or } \{n_7,n_8\} \subseteq \mathcal{P}.
\]
Then observe that for the matrix \eqref{graphHorn}, we have
\begin{align*}
&-(\text{row }1+\text{row }3) = -(\text{row }5+\text{row }6) = (-1,-1,0,0,-1,-1,0,0)\\
&-(\text{row }2+\text{row }4) = -(\text{row }7+\text{row }8) = (0,0,-1,-1,0,0,-1,-1).
\end{align*}
These are precisely rows $?_2$ and $?_3$ from \eqref{graphHorn}.  It follows that the primitive collections of $P$ have an ``internal structure'' \eqref{graphPC} that explains how to get the final two rows of \eqref{graphHorn}. 

\subsection{Final Thoughts} For strict linear precision, we have the characterization given in Theorem~\ref{XML} and the suspicion voiced in Conjecture~\ref{simploidalconj} that the only polytopes with strict linear precision are the B\'ezier simpliods from Proposition~\ref{exampleprop}.

The situation for rational linear precision is less well understood.  While classifiying all polytopes with rational linear precision is likely to be a very difficult project, we suspect that it should be possible to describe the Horn matrix and Horn parametrization when $P$ has  rational linear precision.  

Here are some thoughts about the numerators and denominators that occur in the Horn parametrization.

\bigskip

\noindent {\scshape Numerators.} For the trapezoid, the  parametrization \eqref{trap2} with rational linear precision has the same numerators as the toric blending parametrization \eqref{trap3}, which is why the first four rows of the Horn matrix \eqref{trapHorn} are $(h_i(m_j))_{ij}$.  Similarly, for the graphical model, the parametrization with rational linear precision \eqref{graphlpparam} has the same numerators as the toric blending parametrization \eqref{graphkras}, which is why the first eight rows of the Horn matrix \eqref{graphHorn} are $(h_i(m_j))_{ij}$.  We conjecture that this happens in general, though proving this may require some new ideas. 

\bigskip

\noindent {\scshape Denominators.} Here, the situation is less clear.  
It is likely that primitive collections have a role to play when the normal fan is simplicial, though more examples will need to be computed before a precise conjecture can be formulated.  
In the nonsimplicial case, such as the graphical model, one may need to look deeper.  In general, we are convinced that the denominators in the Horn parametrization can be explained in terms of the polytope $P$, but the details remain elusive.

\appendix

\section{The Non-Negative Part of a Toric Variety}
\label{nonnegApp}

This appendix discusses the non-negative part $X_{\ge0}$ of a toric variety $X$.  We do not assume normality.  To begin, suppose we have an affine toric variety $V = \mathrm{Spec}(\C[\mathsf{S}])$, where $\mathsf{S} \subseteq M$ is an affine semigroup.  Here are two ways to define $V_{\ge0}$:
\begin{enumerate}
\item[1.] By \cite[Prop.\ 1.3.1]{cls}, points of $V$ correspond to semigroup homomorphisms $\mathsf{S} \to \C$, where $\C$ is a semigroup under multiplication.  Then $V_{\ge0} \subseteq V$ consists of points whose semigroup homomorphism takes values in $\R_{\ge0}$.
\item[2.] If $\mathsf{S} = \N\A \subseteq M$ with $\A$ finite, then $V = Y_\A$ is the affine toric variety of $\A \subseteq M$, i.e., $Y_\A$ is the Zariski closure of the image of the map
\begin{equation}
\label{affinetoricvariety}
t \in T \longmapsto (\chi^{m_1}(t),\dots, \chi^{m_s}(t)) \in \C^s
\end{equation}
where $T = \mathrm{Hom}_\Z(M,\C) \simeq (\C^\times)^d$ and $\A = \{m_1,\dots,m_s\}$.  Then $V_{\ge0}$ is the topological closure of the image of  $T_{>0} = \mathrm{Hom}_\Z(M,\R_{>0}) \simeq \R_{>0}^d$ in $\R_{\ge0}^s$.
\end{enumerate}
The two definitions of $V_{\ge0}$ are easily seen to be equivalent.  
Definition 2 is more common in the literature (see, for example, \cite{sottile}), while Definition 1 has the virtue of being functorial with respect to semigroup homomorphisms.  This has some nice consequences:
\begin{itemize}
\item If $X_\Sigma$ is the toric variety of a fan $\Sigma$, then for $\sigma \in \Sigma$, the subsets $(U_\sigma)_{\ge0} \subseteq U_\sigma$ are compatibles with inclusions of cones, so that 
\[
(X_\Sigma)_{\ge0} = \bigcup_{\sigma \in \Sigma} (U_\sigma)_{\ge0}.
\]
\item $X_\Sigma \mapsto (X_\Sigma)_{\ge0}$ is functorial with respect to toric morphisms.
\item For projective space, one can prove that
\begin{align*}
\PP^n_{\ge0} &= \{\text{points of $\PP^n$ represented by homogeneous coordinates  $\ge0$}\}\\
&= (\R_{\ge0}^{n+1}\setminus \{0\})/\R_{>0}.
\end{align*}
\item For $\A \subseteq M$ finite, the projective toric variety $X_\A \subseteq \PP^{|\A|-1}$ (possibly nonnormal) is covered by affine toric varieties with compatible semigroups, so $(X_\A)_{\ge0}$ can be defined in a functorial way similar to $(X_\Sigma)_{\ge0}$.
\item  As noted in \cite{sottile}, $(X_\A)_{\ge0}$ can be interpreted as a suitable topological closure in projective space using the projective version of \eqref{affinetoricvariety}.  
\end{itemize}

The norm map $|{\bullet}| : \C \to \R_{\ge0}$ induces a norm map $|{\bullet}| : X \to X_{\ge0}$ for any of the toric varieties considered above.  This is easy to see when $V = \mathrm{Spec}(\C[\mathsf{S}])$:\ if $p \in V$ is represented by $\mathsf{S}  \stackrel{p}{\to} \C$, then $|p| \in V_{\ge0}$ is represented by the composition 
\[
\mathsf{S} \longrightarrow \C \stackrel{|\bullet|}{\longrightarrow} \R_{\ge0}
\]
since the second map is a semigroup homomorphism.  By functorality, these patch to give $|{\bullet}| : X \to X_{\ge0}$.  This map is the identity on $X_{\ge0} \subseteq X$ and hence is surjective.

In the body of the paper, we use the norm-squared map $|{\bullet}|^2 : X \to X_{\ge0}$, which in the affine case sends $p \in V$ to $|p|^2 \in V_{\ge0}$ represented by the composition 
\begin{equation}
\label{normsquared}
\mathsf{S}  \stackrel{p}{\longrightarrow} \C \stackrel{|{\bullet}|}{\longrightarrow} \R_{\ge0} \stackrel{a\hskip1pt\mapsto a^2}{\longrightarrow} \R_{\ge0}.
\end{equation}
Observe that $|{\bullet}|^2 : X \to X_{\ge0}$ is onto since $a\mapsto a^2$ is a semigroup automorphism of $\R_{\ge0}$.  For $\C^n$ or $\PP^n$, $|{\bullet}|^2$ is the obvious map that sends each coordinate to its norm-squared.  

We close with two final observations:
\begin{itemize}
\item The norm map $|{\bullet}|$ and norm-squared map $|{\bullet}|^2$ are both functorial with respect to toric morphisms.
\item One can define the positive points $X_{>0} \subseteq X$, which lie in the torus $T \subseteq X$.  We have functorial maps $|{\bullet}| : T \to X_{>0}$ and $|{\bullet}|^2 : T \to X_{>0}$ 
\end{itemize}

\section*{Acknowledgements}

We are grateful to Eyal Markman for useful comments about the Fubini-Study metric.  We also thank Frank Sottile for suggesting the paper \cite{huh} by June Huh and Serkan Ho\c{s}ten for suggesting the paper \cite{seth} by Seth Sullivant.   Finally, we appreciate the helpful remarks and corrections provided by the referee.

\end{document}